\newcommand{\leqnomode}{\tagsleft@true\let\veqno\@@leqno}
\newcommand{\reqnomode}{\tagsleft@false\let\veqno\@@eqno}
\newtheorem{theorem}{Theorem}
\newtheorem{proposition}{Proposition}
\theoremstyle{definition}
\theoremstyle{definition}
\newtheorem{remark}{Remark}
\theoremstyle{theorm}
\newtheorem{definition}{Definition}
\begin{document}

\title{The Quadratic Cycle Cover Problem: special  cases and efficient bounds}
\author{Frank de Meijer \thanks{CentER, Department of Econometrics and OR, Tilburg University, The Netherlands, {\tt f.j.j.demeijer@uvt.nl}}
	\and {Renata Sotirov}  \thanks{Department of Econometrics and OR, Tilburg University, The Netherlands, {\tt r.sotirov@uvt.nl}}}

\date{}

\maketitle

\begin{abstract}
The quadratic cycle cover problem is the problem of finding a set of node-disjoint cycles visiting
all the nodes such that the total sum of interaction costs between consecutive arcs is minimized.
In this paper we study  the linearization problem for the  quadratic cycle cover problem  and related lower bounds.

In particular, we derive various sufficient conditions for the quadratic cost matrix to be linearizable,
and use these conditions to compute  bounds.
We also show how to use a sufficient condition for linearizability within an iterative bounding procedure.
In each step, our algorithm computes the best equivalent representation of the quadratic cost matrix and
its optimal linearizable matrix with respect to the given sufficient condition for linearizability.
Further, we show that the classical Gilmore-Lawler type bound belongs to the family of linearization based bounds,
and therefore  apply the  above mentioned iterative reformulation technique.
We also prove that the linearization vectors resulting from this iterative approach satisfy the constant value property.

The best among here introduced bounds outperform existing lower bounds when taking both quality and efficiency into account.
\end{abstract}

\noindent Keywords: quadratic cycle cover problem, linearization problem, equivalent representations, Gilmore-Lawler bound

\reqnomode
\section{Introduction}
A {disjoint} cycle cover in a directed graph is a set of node-disjoint cycles such that every node is on exactly one cycle. The quadratic cycle cover problem (\textsc{QCCP}) is the problem of finding a {disjoint} cycle cover in a graph such that the total sum of interaction costs between {consecutive} arcs is minimized. Since we assume that all cycle covers in this paper are disjoint, we use the term cycle cover to denote this concept throughout this work. The QCCP is proven to be $\mathcal{NP}$-hard {\cite{FischerEtAl}}. The corresponding linear problem is called the cycle cover problem (\textsc{CCP}), in which one wants to find a minimum cycle cover with respect to linear arc costs. It is well known that the \textsc{CCP} is solvable in polynomial time.

In the literature several special cases with respect to the objective function of the \textsc{QCCP} are considered. In the angular metric cycle cover problem (\textsc{Angle-CCP}) the quadratic costs represent the change of the direction induced by two consecutive arcs. The goal of \textsc{Angle-CCP} is to find a cycle cover of the graph while minimizing the total angular costs. The \textsc{Angle-CCP} has applications in robotics \cite{Aggarwal}. In the same paper it is shown that \textsc{Angle-CCP} is $\mathcal{NP}$-hard. Only recently Galbiati et al.\ \cite{Galbiati} introduced another special case of the \textsc{QCCP}: the minimum reload cost cycle cover problem (\textsc{MinRC3}). The \textsc{MinRC3} problem asks for a minimum cycle cover in an arc-colored graph under the reload cost model. A reload cost is an interaction cost that is paid when two arcs of different colors are placed in succession on a cycle. The goal of the \textsc{MinRC3} problem is to find a cycle cover such that the total reload cost is minimized. The problem is proven to be $\mathcal{NP}$-hard in the strong sense \cite{Galbiati}. The notion of reload costs is introduced by Wirth and Steffan \cite{WirthSteffan}, and it has many applications in various fields, e.g. in cargo, energy and telecommunication networks \cite{Amaldi, WirthSteffan}. A detailed overview of the \textsc{MinRC3} problem and its applications can be found in \cite{Buyukcolak}. Several other combinatorial optimization problems including these reload costs have been investigated \cite{Amaldi, Galbiati2008, Gamvros, Gourves, WirthSteffan}.

The \textsc{QCCP} is closely related to the quadratic traveling salesman problem (\textsc{QTSP}) which is introduced in \cite{Jager}. The \textsc{QTSP} is the problem of finding a Hamiltonian cycle in a graph minimizing a quadratic cost function. It has applications in bioinformatics, robotics and telecommunication \cite{AandFFischer}. When we remove the subtour elimination constraints, the \textsc{QTSP} boils down to the \textsc{QCCP}. Therefore, the \textsc{QCCP} is often used to provide lower bounds for the \textsc{QTSP} \cite{AandFFischer, Jager, Stanek}. For this reason, the quadratic cycle cover problem is an interesting optimization problem that has received more attention in the past few years.

Several papers have been written about solution methods for the \textsc{QCCP} or its related problems. J\"ager and Molitor \cite{Jager} introduced the \textsc{QCCP} in order to use the \textsc{QCCP} bounds as lower bounds in a branch-and-bound algorithm for the \textsc{QTSP}.  Stan\v ek et al.\ \cite{Stanek} use the \textsc{QCCP} in combination with a rounding procedure to construct heuristics for the \textsc{QTSP}. Aggarwal et al.\ \cite{Aggarwal} provide a $\mathcal{O}(\log n)$-approximation algorithm for the \textsc{Angle-CCP}. Fischer \cite{Fischer} studies the polyhedral properties of the \textsc{QCCP} by proving that some triangle inequalities are facet-defining. Galbiati et al.\ \cite{Galbiati} derive various integer programming formulations for the \textsc{MinRC3} problem.
They exploit one of those formulations together with a column generation approach to compute  lower bounds for the original problem.
Moreover, in \cite{Galbiati} a local search algorithm based on 2-exchange and 3-exchange neighbourhoods is constructed to compute upper bounds for the \textsc{MinRC3} problem. B\"uy\"uk\c colak et al.\ \cite{Buyukcolak} study the \textsc{MinRC3} problem on complete graphs with an equitable or nearly equitable 2-edge coloring. For these types of graphs (except some special cases) a polynomial time algorithm is derived that constructs a monochromatic cycle cover.

We focus here on the linearization problem of the \textsc{QCCP} and its applications. An instance of the quadratic cycle cover problem is called linearizable if there exists an instance of the linear cycle cover problem such that the associated costs for both problems are equal for all feasible cycle covers. The linearization problem of the quadratic cycle cover problem asks whether a given instance of the \textsc{QCCP} is linearizable. To the best of our knowledge, this is the first paper about the linearization problem of the \textsc{QCCP}.

In the past few years linearization problems have become an active field of research for many combinatorial optimization problems. In \cite{KabadiPunnen1,KabadiPunnen2} the linearization problem of the quadratic assignment problem (QAP) is studied and polynomial time algorithms that solve it are provided.
 In particular, Kabadi and Punnen \cite{KabadiPunnen1}  (resp.~Punnen and Kabadi \cite{KabadiPunnen2}) present an ${\mathcal{O}}(n^4)$  (resp.~${\mathcal{O}}(n^2)$)  algorithm for the  general
 (resp.~Koopmans-Beckmann) QAP linearization problem, where $n$ is the size of the problem.
 The linearization problem for the quadratic minimum spanning tree problem and the quadratic traveling salesman problem are studied by \'Custi\'c and Punnen \cite{CusticPunnen} and Punnen et al.\ \cite{Punnen}, respectively. Hu and Sotirov \cite{HuSotirov2} develop a polynomial time algorithm that solves the linearization problem of the quadratic shortest path problem  on directed graphs.

\paragraph{Main results and outline.}
In this paper, we first  provide an elegant and compact proof that the quadratic cycle cover problem is strongly $\mathcal{NP}$-hard and not approximable within any constant factor unless $\mathcal{P}=\mathcal{NP}$.
Then, we consider the linearization problem of the \textsc{QCCP} and derive various {sufficient} conditions for an instance of the \textsc{QCCP} to be linearizable.
In particular, we provide three different types of weak sum conditions on the data matrix for which the corresponding instance can be solved in polynomial time.
Further, we present a general framework in which each sufficient condition of linearizability can be used to construct a lower bound on the optimal objective value.
Each of these bounds can be computed by a solving a linear programming problem, as long as the set of linearizable matrices is a polyhedron.
These types of bounds are called linearization based bounds (LBB), and were recently introduced in \cite{HuSotirov2} for general binary quadratic problems.
However, our LBBs exploit  sufficient conditions of linearizability  suited for the \textsc{QCCP}.

Furthermore, we show how to use a sufficient condition of linearizability within an iterative bounding procedure.
In each iteration, we search for the best equivalent representation of the objective and its optimal linearizable matrix
that satisfies a particular sufficient condition of linearizability.
We refer to the resulting bound as the reformulation based bound (RBB).
Our iterative bounding procedure can be seen as a generalization  of similar iterative procedures,   see e.g., \cite{Carraresi, RostamiMalucelli, Rostami}.

Finally, we consider the classical Gilmore-Lawler (GL) type bound \cite{Gilmore, Lawler}.  First, we show that the GL type bound for the \textsc{QCCP} can be obtained  by solving
a single linear programming problem instead of solving $m$ (integer) subproblems, where $m$ equals the number of arcs in the graph.
Then, we prove that the GL type bound belongs to the family of linearization based bounds by providing the appropriate sufficient condition.
We implement our iterative bounding procedure to compute the RBB using the  GL type bound.
In {the} literature, iterative approaches for various problems that are based on the  GL type bounds  use dual variables to obtain bounds, and do not search for
equivalent reformulations that  provide best bounds in each iteration. Clearly, our approach outperforms others {in terms of strength of the bound}.
Another interesting result is that the linearization vectors resulting from this iterative procedure satisfy the constant value property.
Yet another important property for linearizability.

Our numerical results show that the  introduced bounding approaches are efficient and provide strong bounds compared to several methods from the literature.
In particular, our most prominent bound  can be computed within 60 seconds for instances up to 15000 arcs.
Interestingly, the GL type bound that is known to be one of the computationally cheapest bounds for quadratic optimization problems
 cannot be computed on such large instances. \\ \\
This paper is organized as follows. In Section \ref{TheQCCP}, we formally introduce the {QCCP} and prove its $\mathcal{NP}$-hardness.
In Section \ref{SufficientConditions}, the linearization problem for the \textsc{QCCP} is introduced and several sufficient conditions for linearizability are derived.
 The general framework for the computation of the linearization based bounds is discussed in Section \ref{LinearizationBasedBounds}.
 These bounds are used in Section \ref{ReformulatedLBBApproach} to construct an iterative bounding procedure for each sufficient condition.
 In Section \ref{GilmoreLawler}, we consider the classical GL type bound and prove that it belongs to the family of linearization based bounds.
 We also show how the iterative procedure for this linearization based bound boils down to the computation of the strongest GL type bound in each step.
 In Section \ref{OtherBounds}, we briefly discuss several other bounds from the literature. Numerical results are given in Section \ref{NumericalResults}.

\subsection*{Notation}
A directed graph $G = (N,A)$ is given by a node set $N$ and an arc set $A \subseteq N \times N$. For all nodes $i \in N$ we denote by $\delta^+(i)$ the set of arcs that are leaving $i$. Similarly, $\delta^-(i)$ denotes the set of arcs that are entering $i$. For all arcs $e \in A$ we let $e^+$ and $e^-$ denote the starting and ending node of $e$, respectively. To avoid confusion, the letters $e,f$ and $g$ are only used to denote arcs in this work.

For any square matrix $M$, we introduce the operator $\text{diag}:\mathbb{R}^{n \times n} \rightarrow \mathbb{R}^n$ that maps a matrix to a vector consisting of its diagonal elements. Moreover, we denote by $\text{Diag}: \mathbb{R}^{n} \rightarrow \mathbb{R}^{n \times n}$ its adjoint operator. That is, for any $v \in \mathbb{R}^n$ the matrix $\text{Diag}(v)$ equals a diagonal matrix with the entries of $v$ on its main diagonal.

\section{The Quadratic Cycle Cover Problem} \label{TheQCCP}
In this section, we formally introduce the quadratic cycle cover problem.\\ \\
An instance $\mathcal{I}$ of the \textsc{QCCP} is specified by the pair $\mathcal{I} = (G, Q)$, where $G = (N,A)$ is a directed graph with $n$ vertices and $m$ arcs and $Q = (Q_{ef}) \in \mathbb{R}^{m \times m}$ is a quadratic cost matrix. The entries in $Q$ are such that $Q_{ef} = 0$ if $f$ is not a successor of $e$. In other words, the quadratic cost of two arcs $e$ and $f$ can be nonzero only if {the starting node of $f$ equals the ending node of $e$}. In case we also consider linear arc costs, i.e.\ we have a cost function $p: A \rightarrow \mathbb{R}$, we can put these arc costs on the diagonal of the quadratic cost matrix. Therefore, we assume that the cost structure of an instance of the \textsc{QCCP} is fully determined by its quadratic cost matrix.  \\
Now let $x \in \{0,1\}^{m}$ be a vector with $x_e = 1$ if arc $e$ belongs to a cycle cover, and 0 otherwise. Then the \textsc{QCCP} can be formulated as
\begin{equation} \label{QCCPdefinition}
\begin{aligned}
\textit{OPT(Q)} := \, \, \, \min \, & \quad  x^TQx \\
\text{s.t.} & \quad  x \in X,
\end{aligned}
\end{equation}
where $X$ denotes the set consisting of all {disjoint} cycle covers in $G$, i.e.
\begin{align}
X := \left \{ x \in \{0,1\}^{m} \, \, | \, \sum_{e \in \delta^+(i)} x_e = \sum_{e \in \delta^-(i)} x_e = 1 \, , \, \forall i \in N \, \right \}. \label{X}
\end{align}
The above set  equals the set of  directed 2-factors in $G$. For the existence of such a directed 2-factor in a directed graph, see e.g. Chiba and Yamashita  \cite{Chiba}.\\ \\
The quadratic cycle cover problem is $\mathcal{NP}$-hard  {\cite{FischerEtAl}}. {Also, the related problems \textsc{Angle-CCP} and the \textsc{MinRC3} problem are shown to be $\mathcal{NP}$-hard \cite{Aggarwal} and strongly $\mathcal{NP}$-hard \cite{Galbiati}, respectively.} We now provide {an alternative reduction that establishes strong $\mathcal{NP}$-hardness} which is based on a reduction from the quadratic assignment problem. We consider the Koopmans-Beckmann form of the \textsc{QAP} {introduced in} \cite{KoopmansBeckmann}. Let $F$ and $P$ be a set of  $n$ facilities and $n$ locations, respectively, $w: F \times F \rightarrow \mathbb{R}$ a weight function and $d: P \times P \rightarrow \mathbb{R}$ a distance function. Without loss of generality, we assume that $d_{ii} = w_{ii} = 0$ for all $i \in \{1, ..., n\}$. Then, we search for a bijection $\pi : F \rightarrow P$ such that $\sum_{i = 1}^n\sum_{j = 1}^nd_{\pi(i) \pi(j)}w_{ij}$ is minimized. The \textsc{QAP} is $\mathcal{NP}$-hard in the strong sense and not approximable within any constant factor \cite{SahniGonzalez}.

\begin{theorem}
The $QCCP$ is $\mathcal{NP}$-hard in the strong sense and  cannot be approximated within a constant factor unless $\mathcal{P}=\mathcal{NP}$.
\end{theorem}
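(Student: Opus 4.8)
The plan is to construct a polynomial-time reduction from the Koopmans-Beckmann QAP to the QCCP that is \emph{cost-preserving} in the sense that optimal (and indeed all feasible) objective values are matched exactly up to a fixed affine transformation. Since the QAP is strongly $\mathcal{NP}$-hard and inapproximable within any constant factor, such a reduction transfers both properties to the QCCP. The central idea is that a bijection $\pi: F \to P$ is just a permutation of $\{1,\dots,n\}$, and a permutation corresponds bijectively to a set of node-disjoint cycles on a suitable auxiliary graph. I would therefore build a directed graph $G$ whose 2-factors encode exactly the permutations of $[n]$, and then define the quadratic cost matrix $Q$ so that the interaction cost $x^T Q x$ of the 2-factor encoding $\pi$ equals $\sum_{i,j} d_{\pi(i)\pi(j)} w_{ij}$.

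First I would set up the graph. A natural choice is a bipartite-style gadget: introduce for each facility $i$ and each location $k$ an arc $e_{ik}$ meaning ``$\pi(i) = k$.'' To force the flow-conservation constraints in \eqref{X} to select exactly one location per facility and exactly one facility per location, I would arrange these arcs so that selecting a consistent set of $e_{ik}$ produces a valid 2-factor precisely when the selected pairs $(i,k)$ form a permutation. Concretely, one builds a layered/auxiliary construction on $2n$ (or similar) nodes so that the in-degree-one and out-degree-one constraints of $X$ enforce that each facility is matched to a distinct location. The key design requirement is that every feasible $x \in X$ corresponds to a permutation $\pi$ and conversely, with the correspondence computable in polynomial time; the number of nodes and arcs must be polynomial in $n$.

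Next I would define the quadratic costs. For two arcs $e_{ik}$ and $e_{jl}$ that represent $\pi(i)=k$ and $\pi(j)=l$, the contribution $d_{kl} w_{ij}$ should be charged. The obstacle here is the structural restriction imposed in the paper: $Q_{ef}$ may be nonzero only when $f$ is a successor of $e$, i.e.\ when the ending node of $e$ equals the starting node of $f$. A naive encoding wants to charge interactions between \emph{all} pairs of selected arcs, not just consecutive ones on a cycle, so the main difficulty is to route the cycle structure so that each ordered pair of assignment-arcs either appears consecutively (allowing its QAP cost to be placed in $Q$) or can be accounted for by summing consecutive contributions along the cycles. I expect this to be the crux of the argument: one must design the gadget so that the successor relation of the cycle cover exposes exactly the pairs $(i,j)$ needed to reconstruct $\sum_{i,j} d_{\pi(i)\pi(j)} w_{ij}$, possibly by duplicating arcs or inserting intermediate nodes so that consecutive-arc interactions along the forced cycles reproduce the full double sum.

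Finally, having established a bijection between feasible cycle covers and permutations together with the identity $x^T Q x = \sum_{i,j} d_{\pi(i)\pi(j)} w_{ij}$ (after using $d_{ii}=w_{ii}=0$ to discard diagonal terms and absorbing any unavoidable constant offset into the linear/diagonal part of $Q$), I would conclude that $\textit{OPT}(Q)$ equals the optimal QAP value. Because all entries of $Q$ are copies of the $d$ and $w$ values, the reduction preserves polynomial encoding size and hence transfers strong $\mathcal{NP}$-hardness; and because the objective values coincide exactly, any constant-factor approximation for the QCCP would yield one for the QAP, which is impossible unless $\mathcal{P}=\mathcal{NP}$. The routine parts are verifying the degree constraints and the arithmetic of the cost identity; the genuinely delicate part is the gadget that reconciles the all-pairs QAP interaction with the consecutive-arc-only structure of $Q$.
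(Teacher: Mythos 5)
Your high-level strategy is the same as the paper's: a gap-preserving, cost-exact reduction from the Koopmans--Beckmann QAP. But the proposal stops exactly where the proof has to start. You correctly identify the crux --- the QAP objective sums interactions over \emph{all} ordered pairs $(i,j)$, while $Q_{ef}$ may be nonzero only when $f$ is the successor of $e$ on a cycle, so each selected arc can ``see'' only one other selected arc --- and then you leave it unresolved (``possibly by duplicating arcs or inserting intermediate nodes''). That is not a routine detail; it is the entire content of the theorem's proof. Your proposed bipartite-style gadget on $O(n)$ nodes with one arc $e_{ik}$ per assignment cannot work as stated: with only $n$ selected assignment arcs arranged on cycles, the successor relation exposes only $n$ ordered pairs, not the $n(n-1)$ pairs needed for $\sum_{i,j} d_{\pi(i)\pi(j)}w_{ij}$.

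The paper resolves this by replication: each facility $i$ gets a \emph{group} of $n-1$ identical cells (one per other facility $j$), blowing the graph up to $O(n^3)$ nodes and arcs, and each unordered pair of groups $\{i,j\}$ meets at a dedicated connection node where the single consecutive-arc interaction is charged the \emph{combined} cost $d_{k'l'}w_{ij}+d_{l'k'}w_{ji}$, so that summing over the $\binom{n}{2}$ connection nodes recovers the full double sum. Two further mechanisms you do not supply are also essential: (i) consistency --- the $n-1$ copies of facility $i$'s choice must all encode the same location, which the paper enforces by threading same-location nodes within a group onto zero-cost inner cycles and putting cost $\infty$ on any inner/outer arc transition; and (ii) injectivity of the assignment --- two facilities must not pick the same location, enforced by cost $\infty$ on the diagonal pairs $k'=l'$ at each connection node. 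Without these gadgets your reduction does not establish the bijection between finite-cost cycle covers and permutations, so the proof as written is incomplete.
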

\begin{proof}
Let an instance $\mathcal{I}$ of the \textsc{QAP} be given, i.e., we consider sets $F = \{1, ...,n\}$ and $P = \{1', ..., n'\}$ with $|P| = |F| = n$, functions $w: F \times F \rightarrow \mathbb{R}$ and $d : P \times P \rightarrow \mathbb{R}$ and a positive integer $K$. We create an instance $\mathcal{I}'$ of the \textsc{QCCP}.

For the reduction we create a directed graph $G = (N,A)$ that consists of cells. A cell belongs to a single facility and consists of $n$ nodes, each of them corresponding to an assignment to one of the $n$ locations. For each facility $i \in F$, we define a set of $n-1$ identical cells, which we call a group. The nodes corresponding to the same assignment within a group are placed on a directed cycle. In this way, we obtain $n$ cycles per group, which we call inner cycles. We set the interaction cost between each of the successive arcs within a group to zero for all groups. In Figure \ref{GroupFacility1} the group corresponding to facility 1 is given.

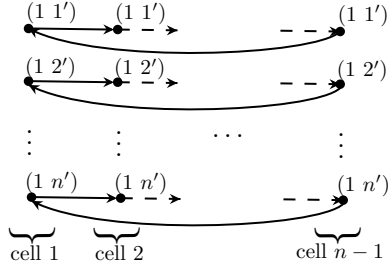
\begin{figure}[H]

\centering

\tikzset{every picture/.style={line width=0.75pt}} %set default line width to 0.75pt

\begin{tikzpicture}[scale = 0.5, x=0.75pt,y=0.75pt,yscale=-1,xscale=1]
%uncomment if require: \path (0,300); %set diagram left start at 0, and has height of 300

%Shape: Circle [id:dp25318656174342724]
\draw  [fill={rgb, 255:red, 0; green, 0; blue, 0 }  ,fill opacity=1 ] (103.64,59.25) .. controls (103.67,61.32) and (102.02,63.03) .. (99.95,63.06) .. controls (97.88,63.09) and (96.17,61.44) .. (96.14,59.37) .. controls (96.11,57.29) and (97.76,55.59) .. (99.83,55.56) .. controls (101.91,55.53) and (103.61,57.18) .. (103.64,59.25) -- cycle ;
%Shape: Circle [id:dp9847072404531916]
\draw  [fill={rgb, 255:red, 0; green, 0; blue, 0 }  ,fill opacity=1 ] (103.64,112.25) .. controls (103.67,114.32) and (102.02,116.03) .. (99.95,116.06) .. controls (97.88,116.09) and (96.17,114.44) .. (96.14,112.37) .. controls (96.11,110.29) and (97.76,108.59) .. (99.83,108.56) .. controls (101.91,108.53) and (103.61,110.18) .. (103.64,112.25) -- cycle ;
%Shape: Circle [id:dp04660566349676243]
\draw  [fill={rgb, 255:red, 0; green, 0; blue, 0 }  ,fill opacity=1 ] (106.64,228.92) .. controls (106.67,230.99) and (105.02,232.69) .. (102.95,232.72) .. controls (100.88,232.76) and (99.17,231.1) .. (99.14,229.03) .. controls (99.11,226.96) and (100.76,225.26) .. (102.83,225.23) .. controls (104.91,225.19) and (106.61,226.85) .. (106.64,228.92) -- cycle ;
%Shape: Circle [id:dp6111024586566245]
\draw  [fill={rgb, 255:red, 0; green, 0; blue, 0 }  ,fill opacity=1 ] (192.64,60.25) .. controls (192.67,62.32) and (191.02,64.03) .. (188.95,64.06) .. controls (186.88,64.09) and (185.17,62.44) .. (185.14,60.37) .. controls (185.11,58.29) and (186.76,56.59) .. (188.83,56.56) .. controls (190.91,56.53) and (192.61,58.18) .. (192.64,60.25) -- cycle ;
%Shape: Circle [id:dp8767051912248269]
\draw  [fill={rgb, 255:red, 0; green, 0; blue, 0 }  ,fill opacity=1 ] (192.64,113.25) .. controls (192.67,115.32) and (191.02,117.03) .. (188.95,117.06) .. controls (186.88,117.09) and (185.17,115.44) .. (185.14,113.37) .. controls (185.11,111.29) and (186.76,109.59) .. (188.83,109.56) .. controls (190.91,109.53) and (192.61,111.18) .. (192.64,113.25) -- cycle ;
%Shape: Circle [id:dp874989347476961]
\draw  [fill={rgb, 255:red, 0; green, 0; blue, 0 }  ,fill opacity=1 ] (195.64,229.92) .. controls (195.67,231.99) and (194.02,233.69) .. (191.95,233.72) .. controls (189.88,233.76) and (188.17,232.1) .. (188.14,230.03) .. controls (188.11,227.96) and (189.76,226.26) .. (191.83,226.23) .. controls (193.91,226.19) and (195.61,227.85) .. (195.64,229.92) -- cycle ;
%Straight Lines [id:da8843602537875905]
\draw    (103.64,59.25) -- (183.14,60.34) ;
\draw [shift={(185.14,60.37)}, rotate = 180.78] [fill={rgb, 255:red, 0; green, 0; blue, 0 }  ][line width=0.75]  [draw opacity=0] (10.72,-5.15) -- (0,0) -- (10.72,5.15) -- (7.12,0) -- cycle    ;

%Shape: Circle [id:dp954654832097922]
\draw  [fill={rgb, 255:red, 0; green, 0; blue, 0 }  ,fill opacity=1 ] (414.64,62.25) .. controls (414.67,64.32) and (413.02,66.03) .. (410.95,66.06) .. controls (408.88,66.09) and (407.17,64.44) .. (407.14,62.37) .. controls (407.11,60.29) and (408.76,58.59) .. (410.83,58.56) .. controls (412.91,58.53) and (414.61,60.18) .. (414.64,62.25) -- cycle ;
%Shape: Circle [id:dp08211727818976788]
\draw  [fill={rgb, 255:red, 0; green, 0; blue, 0 }  ,fill opacity=1 ] (414.64,115.25) .. controls (414.67,117.32) and (413.02,119.03) .. (410.95,119.06) .. controls (408.88,119.09) and (407.17,117.44) .. (407.14,115.37) .. controls (407.11,113.29) and (408.76,111.59) .. (410.83,111.56) .. controls (412.91,111.53) and (414.61,113.18) .. (414.64,115.25) -- cycle ;
%Shape: Circle [id:dp26003129963823524]
\draw  [fill={rgb, 255:red, 0; green, 0; blue, 0 }  ,fill opacity=1 ] (417.64,231.92) .. controls (417.67,233.99) and (416.02,235.69) .. (413.95,235.72) .. controls (411.88,235.76) and (410.17,234.1) .. (410.14,232.03) .. controls (410.11,229.96) and (411.76,228.26) .. (413.83,228.23) .. controls (415.91,228.19) and (417.61,229.85) .. (417.64,231.92) -- cycle ;
%Straight Lines [id:da16271573221340807]
\draw    (103.64,112.25) -- (183.14,113.34) ;
\draw [shift={(185.14,113.37)}, rotate = 180.78] [fill={rgb, 255:red, 0; green, 0; blue, 0 }  ][line width=0.75]  [draw opacity=0] (10.72,-5.15) -- (0,0) -- (10.72,5.15) -- (7.12,0) -- cycle    ;

%Straight Lines [id:da24794729557378803]
\draw    (106.64,228.92) -- (186.14,230) ;
\draw [shift={(188.14,230.03)}, rotate = 180.78] [fill={rgb, 255:red, 0; green, 0; blue, 0 }  ][line width=0.75]  [draw opacity=0] (10.72,-5.15) -- (0,0) -- (10.72,5.15) -- (7.12,0) -- cycle    ;

%Straight Lines [id:da28659038073038334]
\draw  [dash pattern={on 4.5pt off 4.5pt}]  (188.89,60.31) -- (248.53,61.12) ;
\draw [shift={(248.53,61.12)}, rotate = 180] [fill={rgb, 255:red, 0; green, 0; blue, 0 }  ][line width=0.75]  [draw opacity=0] (10.72,-5.15) -- (0,0) -- (10.72,5.15) -- (7.12,0) -- cycle    ;

%Straight Lines [id:da7909315321853456]
\draw  [dash pattern={on 4.5pt off 4.5pt}]  (188.89,113.31) -- (248.53,114.12) ;
\draw [shift={(248.53,114.12)}, rotate = 180] [fill={rgb, 255:red, 0; green, 0; blue, 0 }  ][line width=0.75]  [draw opacity=0] (10.72,-5.15) -- (0,0) -- (10.72,5.15) -- (7.12,0) -- cycle    ;

%Straight Lines [id:da518926282878204]
\draw  [dash pattern={on 4.5pt off 4.5pt}]  (191.89,229.97) -- (251.53,230.79) ;
\draw [shift={(251.53,230.79)}, rotate = 180] [fill={rgb, 255:red, 0; green, 0; blue, 0 }  ][line width=0.75]  [draw opacity=0] (10.72,-5.15) -- (0,0) -- (10.72,5.15) -- (7.12,0) -- cycle    ;

%Straight Lines [id:da41041324240978394]
\draw  [dash pattern={on 4.5pt off 4.5pt}]  (351.26,61.49) -- (410.89,62.31) ;
\draw [shift={(410.89,62.31)}, rotate = 180] [fill={rgb, 255:red, 0; green, 0; blue, 0 }  ][line width=0.75]  [draw opacity=0] (10.72,-5.15) -- (0,0) -- (10.72,5.15) -- (7.12,0) -- cycle    ;

%Straight Lines [id:da3863807303179623]
\draw  [dash pattern={on 4.5pt off 4.5pt}]  (351.26,114.49) -- (410.89,115.31) ;
\draw [shift={(410.89,115.31)}, rotate = 180] [fill={rgb, 255:red, 0; green, 0; blue, 0 }  ][line width=0.75]  [draw opacity=0] (10.72,-5.15) -- (0,0) -- (10.72,5.15) -- (7.12,0) -- cycle    ;

%Straight Lines [id:da7751325045564748]
\draw  [dash pattern={on 4.5pt off 4.5pt}]  (354.26,231.16) -- (413.89,231.97) ;
\draw [shift={(413.89,231.97)}, rotate = 180] [fill={rgb, 255:red, 0; green, 0; blue, 0 }  ][line width=0.75]  [draw opacity=0] (10.72,-5.15) -- (0,0) -- (10.72,5.15) -- (7.12,0) -- cycle    ;

%Curve Lines [id:da416496641663032]
\draw    (410.89,62.31) .. controls (385.63,86.88) and (170.67,95.91) .. (100.93,59.85) ;
\draw [shift={(99.89,59.31)}, rotate = 388.14] [fill={rgb, 255:red, 0; green, 0; blue, 0 }  ][line width=0.75]  [draw opacity=0] (10.72,-5.15) -- (0,0) -- (10.72,5.15) -- (7.12,0) -- cycle    ;

%Curve Lines [id:da7603548964820352]
\draw    (410.95,119.06) .. controls (385.68,143.63) and (170.72,152.66) .. (100.99,116.6) ;
\draw [shift={(99.95,116.06)}, rotate = 388.14] [fill={rgb, 255:red, 0; green, 0; blue, 0 }  ][line width=0.75]  [draw opacity=0] (10.72,-5.15) -- (0,0) -- (10.72,5.15) -- (7.12,0) -- cycle    ;

%Curve Lines [id:da5823968486182294]
\draw    (413.95,235.72) .. controls (388.68,260.29) and (173.72,269.33) .. (103.99,233.27) ;
\draw [shift={(102.95,232.72)}, rotate = 388.14] [fill={rgb, 255:red, 0; green, 0; blue, 0 }  ][line width=0.75]  [draw opacity=0] (10.72,-5.15) -- (0,0) -- (10.72,5.15) -- (7.12,0) -- cycle    ;

% Text Node
\draw (101,166.33) node  [align=left] {$\displaystyle \vdots $};
% Text Node
\draw (123,43) node [scale=0.8] [align=left] {$\displaystyle {\textstyle ( 1\ 1')}$};
% Text Node
\draw (122,99) node [scale=0.8] [align=left] {$\displaystyle {\textstyle ( 1\ 2')}$};
% Text Node
\draw (123,214) node [scale=0.8] [align=left] {$\displaystyle {\textstyle ( 1\ n')}$};
% Text Node
\draw (190,167.33) node  [align=left] {$\displaystyle \vdots $};
% Text Node
\draw (212,44) node [scale=0.8] [align=left] {$\displaystyle {\textstyle ( 1\ 1')}$};
% Text Node
\draw (211,100) node [scale=0.8] [align=left] {$\displaystyle {\textstyle ( 1\ 2')}$};
% Text Node
\draw (212,215) node [scale=0.8] [align=left] {$\displaystyle {\textstyle ( 1\ n')}$};
% Text Node
\draw (301,167.33) node  [align=left] {$\displaystyle \dotsc $};
% Text Node
\draw (412,169.33) node  [align=left] {$\displaystyle \vdots $};
% Text Node
\draw (434,46) node [scale=0.8] [align=left] {$\displaystyle {\textstyle ( 1\ 1')}$};
% Text Node
\draw (433,102) node [scale=0.8] [align=left] {$\displaystyle {\textstyle ( 1\ 2')}$};
% Text Node
\draw (434,217) node [scale=0.8] [align=left] {$\displaystyle {\textstyle ( 1\ n')}$};
% Text Node
\draw (106.33,285) node  [scale = 0.8][align=left] {cell 1};
% Text Node
\draw (103.67,268) node [scale=1.7280000000000002,rotate=-269.46] [align=left] {\{};
% Text Node
\draw (190.67,285) node  [scale = 0.8][align=left] {cell 2};
% Text Node
\draw (188,268) node [scale=1.7280000000000002,rotate=-269.46] [align=left] {\{};
% Text Node
\draw (410.67,283.67) node  [scale = 0.8][align=left] {cell $\displaystyle n-1$};
% Text Node
\draw (408.67,268) node [scale=1.7280000000000002,rotate=-269.46] [align=left] {\{};

\end{tikzpicture}
\caption{Group consisting of $n-1$ cells corresponding to facility 1. \label{GroupFacility1}}
\end{figure}
\noindent We now specify the connections between the groups. Each group is connected to any other group via one of its cells. Since we have $n$ groups and each group consists of $n-1$ cells, this results in ${n \choose 2}$ connections. Connecting the cells of two groups is done by introducing a connection node and a relink node. Starting from the first group, we draw an arc from every node of one of its cells to the connection node. Successively, we draw an arc from the connection node to all the nodes of one of the cells of the second group. The same is done for the relink node, now in the reverse direction. Figure \ref{ConnectionCells} depicts an overview of the connection between the last cell of group $i$ and the first cell of group $j$. We denote the cycles between the groups by outer cycles. In Figure \ref{ConnectionCells} solid arcs are used for the outer cycles, while the inner cycles are drawn using dashed arcs. A similar connection via connection and relink nodes exists for all other pairs of groups. 

\begin{figure}[H]
\centering
\tikzset{every picture/.style={line width=0.75pt}} %set default line width to 0.75pt

\tikzset{every picture/.style={line width=0.75pt}} %set default line width to 0.75pt

\begin{tikzpicture}[scale = 0.8, x=0.75pt,y=0.75pt,yscale=-1,xscale=1]
%uncomment if require: \path (0,300); %set diagram left start at 0, and has height of 300

%Shape: Rectangle [id:dp8756147911399588]
\draw   (20.5,102) -- (245.5,102) -- (245.5,203) -- (20.5,203) -- cycle ;
%Shape: Circle [id:dp9664007764972202]
\draw  [fill={rgb, 255:red, 0; green, 0; blue, 0 }  ,fill opacity=1 ] (43,117.25) .. controls (43,115.73) and (44.23,114.5) .. (45.75,114.5) .. controls (47.27,114.5) and (48.5,115.73) .. (48.5,117.25) .. controls (48.5,118.77) and (47.27,120) .. (45.75,120) .. controls (44.23,120) and (43,118.77) .. (43,117.25) -- cycle ;
%Shape: Circle [id:dp7144870531084326]
\draw  [fill={rgb, 255:red, 0; green, 0; blue, 0 }  ,fill opacity=1 ] (43,137.75) .. controls (43,136.23) and (44.23,135) .. (45.75,135) .. controls (47.27,135) and (48.5,136.23) .. (48.5,137.75) .. controls (48.5,139.27) and (47.27,140.5) .. (45.75,140.5) .. controls (44.23,140.5) and (43,139.27) .. (43,137.75) -- cycle ;
%Shape: Circle [id:dp39850083284398474]
\draw  [fill={rgb, 255:red, 0; green, 0; blue, 0 }  ,fill opacity=1 ] (43.5,186.25) .. controls (43.5,184.73) and (44.73,183.5) .. (46.25,183.5) .. controls (47.77,183.5) and (49,184.73) .. (49,186.25) .. controls (49,187.77) and (47.77,189) .. (46.25,189) .. controls (44.73,189) and (43.5,187.77) .. (43.5,186.25) -- cycle ;
%Shape: Circle [id:dp44071027061035406]
\draw  [fill={rgb, 255:red, 0; green, 0; blue, 0 }  ,fill opacity=1 ] (89,117.25) .. controls (89,115.73) and (90.23,114.5) .. (91.75,114.5) .. controls (93.27,114.5) and (94.5,115.73) .. (94.5,117.25) .. controls (94.5,118.77) and (93.27,120) .. (91.75,120) .. controls (90.23,120) and (89,118.77) .. (89,117.25) -- cycle ;
%Shape: Circle [id:dp5952004456818576]
\draw  [fill={rgb, 255:red, 0; green, 0; blue, 0 }  ,fill opacity=1 ] (89,137.75) .. controls (89,136.23) and (90.23,135) .. (91.75,135) .. controls (93.27,135) and (94.5,136.23) .. (94.5,137.75) .. controls (94.5,139.27) and (93.27,140.5) .. (91.75,140.5) .. controls (90.23,140.5) and (89,139.27) .. (89,137.75) -- cycle ;
%Shape: Circle [id:dp4906045892664277]
\draw  [fill={rgb, 255:red, 0; green, 0; blue, 0 }  ,fill opacity=1 ] (89.5,186.25) .. controls (89.5,184.73) and (90.73,183.5) .. (92.25,183.5) .. controls (93.77,183.5) and (95,184.73) .. (95,186.25) .. controls (95,187.77) and (93.77,189) .. (92.25,189) .. controls (90.73,189) and (89.5,187.77) .. (89.5,186.25) -- cycle ;
%Shape: Circle [id:dp5375982180468577]
\draw  [fill={rgb, 255:red, 0; green, 0; blue, 0 }  ,fill opacity=1 ] (211,118.25) .. controls (211,116.73) and (212.23,115.5) .. (213.75,115.5) .. controls (215.27,115.5) and (216.5,116.73) .. (216.5,118.25) .. controls (216.5,119.77) and (215.27,121) .. (213.75,121) .. controls (212.23,121) and (211,119.77) .. (211,118.25) -- cycle ;
%Shape: Circle [id:dp7933630487137915]
\draw  [fill={rgb, 255:red, 0; green, 0; blue, 0 }  ,fill opacity=1 ] (211,138.75) .. controls (211,137.23) and (212.23,136) .. (213.75,136) .. controls (215.27,136) and (216.5,137.23) .. (216.5,138.75) .. controls (216.5,140.27) and (215.27,141.5) .. (213.75,141.5) .. controls (212.23,141.5) and (211,140.27) .. (211,138.75) -- cycle ;
%Shape: Circle [id:dp29542613453113487]
\draw  [fill={rgb, 255:red, 0; green, 0; blue, 0 }  ,fill opacity=1 ] (211.5,187.25) .. controls (211.5,185.73) and (212.73,184.5) .. (214.25,184.5) .. controls (215.77,184.5) and (217,185.73) .. (217,187.25) .. controls (217,188.77) and (215.77,190) .. (214.25,190) .. controls (212.73,190) and (211.5,188.77) .. (211.5,187.25) -- cycle ;
%Straight Lines [id:da42439420769634384]
\draw  [dash pattern={on 4.5pt off 4.5pt}]  (45.75,117.25) -- (91.75,117.25) ;

%Straight Lines [id:da7804930081083206]
\draw  [dash pattern={on 4.5pt off 4.5pt}]  (45.75,137.75) -- (91.75,137.75) ;

%Straight Lines [id:da141798021370231]
\draw  [dash pattern={on 4.5pt off 4.5pt}]  (49,186.25) -- (95,186.25) ;

%Straight Lines [id:da8187040129688152]
\draw  [dash pattern={on 4.5pt off 4.5pt}]  (91.75,117.25) -- (120.88,117.5) ;

%Straight Lines [id:da7674228415963762]
\draw  [dash pattern={on 4.5pt off 4.5pt}]  (91.75,137.75) -- (120.88,138) ;

%Straight Lines [id:da4923823420929261]
\draw  [dash pattern={on 4.5pt off 4.5pt}]  (92.25,186.25) -- (121.38,186.5) ;

%Straight Lines [id:da20471714846637967]
\draw  [dash pattern={on 4.5pt off 4.5pt}]  (184.5,118) -- (213.75,118.25) ;

%Straight Lines [id:da9878227064272223]
\draw  [dash pattern={on 4.5pt off 4.5pt}]  (184.63,138.5) -- (213.75,138.75) ;

%Straight Lines [id:da37315676390476527]
\draw  [dash pattern={on 4.5pt off 4.5pt}]  (185.13,187) -- (214.25,187.25) ;

%Curve Lines [id:da5460750938028824]
\draw  [dash pattern={on 4.5pt off 4.5pt}]  (45.75,117.25) .. controls (87.5,126) and (161.5,132) .. (213.75,118.25) ;

%Curve Lines [id:da7736997617596097]
\draw  [dash pattern={on 4.5pt off 4.5pt}]  (43,137.75) .. controls (84.75,146.5) and (158.75,152.5) .. (211,138.75) ;

%Curve Lines [id:da1536372395635357]
\draw  [dash pattern={on 4.5pt off 4.5pt}]  (46.25,186.25) .. controls (88,195) and (162,201) .. (214.25,187.25) ;

%Shape: Rectangle [id:dp34906731973144445]
\draw   (412,102) -- (641.5,102) -- (641.5,203) -- (412,203) -- cycle ;
%Shape: Circle [id:dp14228461516819313]
\draw  [fill={rgb, 255:red, 0; green, 0; blue, 0 }  ,fill opacity=1 ] (445,117.25) .. controls (445,115.73) and (446.23,114.5) .. (447.75,114.5) .. controls (449.27,114.5) and (450.5,115.73) .. (450.5,117.25) .. controls (450.5,118.77) and (449.27,120) .. (447.75,120) .. controls (446.23,120) and (445,118.77) .. (445,117.25) -- cycle ;
%Shape: Circle [id:dp4133269884610571]
\draw  [fill={rgb, 255:red, 0; green, 0; blue, 0 }  ,fill opacity=1 ] (445,137.75) .. controls (445,136.23) and (446.23,135) .. (447.75,135) .. controls (449.27,135) and (450.5,136.23) .. (450.5,137.75) .. controls (450.5,139.27) and (449.27,140.5) .. (447.75,140.5) .. controls (446.23,140.5) and (445,139.27) .. (445,137.75) -- cycle ;
%Shape: Circle [id:dp8933034730373495]
\draw  [fill={rgb, 255:red, 0; green, 0; blue, 0 }  ,fill opacity=1 ] (445.5,186.25) .. controls (445.5,184.73) and (446.73,183.5) .. (448.25,183.5) .. controls (449.77,183.5) and (451,184.73) .. (451,186.25) .. controls (451,187.77) and (449.77,189) .. (448.25,189) .. controls (446.73,189) and (445.5,187.77) .. (445.5,186.25) -- cycle ;
%Shape: Circle [id:dp8354667837286716]
\draw  [fill={rgb, 255:red, 0; green, 0; blue, 0 }  ,fill opacity=1 ] (491,117.25) .. controls (491,115.73) and (492.23,114.5) .. (493.75,114.5) .. controls (495.27,114.5) and (496.5,115.73) .. (496.5,117.25) .. controls (496.5,118.77) and (495.27,120) .. (493.75,120) .. controls (492.23,120) and (491,118.77) .. (491,117.25) -- cycle ;
%Shape: Circle [id:dp3763052602794077]
\draw  [fill={rgb, 255:red, 0; green, 0; blue, 0 }  ,fill opacity=1 ] (491,137.75) .. controls (491,136.23) and (492.23,135) .. (493.75,135) .. controls (495.27,135) and (496.5,136.23) .. (496.5,137.75) .. controls (496.5,139.27) and (495.27,140.5) .. (493.75,140.5) .. controls (492.23,140.5) and (491,139.27) .. (491,137.75) -- cycle ;
%Shape: Circle [id:dp0785738982551547]
\draw  [fill={rgb, 255:red, 0; green, 0; blue, 0 }  ,fill opacity=1 ] (491.5,186.25) .. controls (491.5,184.73) and (492.73,183.5) .. (494.25,183.5) .. controls (495.77,183.5) and (497,184.73) .. (497,186.25) .. controls (497,187.77) and (495.77,189) .. (494.25,189) .. controls (492.73,189) and (491.5,187.77) .. (491.5,186.25) -- cycle ;
%Shape: Circle [id:dp004007751730548792]
\draw  [fill={rgb, 255:red, 0; green, 0; blue, 0 }  ,fill opacity=1 ] (613,118.25) .. controls (613,116.73) and (614.23,115.5) .. (615.75,115.5) .. controls (617.27,115.5) and (618.5,116.73) .. (618.5,118.25) .. controls (618.5,119.77) and (617.27,121) .. (615.75,121) .. controls (614.23,121) and (613,119.77) .. (613,118.25) -- cycle ;
%Shape: Circle [id:dp2205616447272225]
\draw  [fill={rgb, 255:red, 0; green, 0; blue, 0 }  ,fill opacity=1 ] (613,138.75) .. controls (613,137.23) and (614.23,136) .. (615.75,136) .. controls (617.27,136) and (618.5,137.23) .. (618.5,138.75) .. controls (618.5,140.27) and (617.27,141.5) .. (615.75,141.5) .. controls (614.23,141.5) and (613,140.27) .. (613,138.75) -- cycle ;
%Shape: Circle [id:dp4945043660260817]
\draw  [fill={rgb, 255:red, 0; green, 0; blue, 0 }  ,fill opacity=1 ] (613.5,187.25) .. controls (613.5,185.73) and (614.73,184.5) .. (616.25,184.5) .. controls (617.77,184.5) and (619,185.73) .. (619,187.25) .. controls (619,188.77) and (617.77,190) .. (616.25,190) .. controls (614.73,190) and (613.5,188.77) .. (613.5,187.25) -- cycle ;
%Straight Lines [id:da2906560956203039]
\draw  [dash pattern={on 4.5pt off 4.5pt}]  (447.75,117.25) -- (493.75,117.25) ;

%Straight Lines [id:da6830542805311792]
\draw  [dash pattern={on 4.5pt off 4.5pt}]  (447.75,137.75) -- (493.75,137.75) ;

%Straight Lines [id:da7260209756935145]
\draw  [dash pattern={on 4.5pt off 4.5pt}]  (451,186.25) -- (497,186.25) ;

%Straight Lines [id:da15185596881227248]
\draw  [dash pattern={on 4.5pt off 4.5pt}]  (493.75,117.25) -- (522.88,117.5) ;

%Straight Lines [id:da709390064072658]
\draw  [dash pattern={on 4.5pt off 4.5pt}]  (493.75,137.75) -- (522.88,138) ;

%Straight Lines [id:da34773800265893007]
\draw  [dash pattern={on 4.5pt off 4.5pt}]  (494.25,186.25) -- (523.38,186.5) ;

%Straight Lines [id:da034655200434956956]
\draw  [dash pattern={on 4.5pt off 4.5pt}]  (586.5,118) -- (615.75,118.25) ;

%Straight Lines [id:da7856071706771068]
\draw  [dash pattern={on 4.5pt off 4.5pt}]  (586.63,138.5) -- (615.75,138.75) ;

%Straight Lines [id:da6702114838703819]
\draw  [dash pattern={on 4.5pt off 4.5pt}]  (587.13,187) -- (616.25,187.25) ;

%Curve Lines [id:da22174183758639443]
\draw  [dash pattern={on 4.5pt off 4.5pt}]  (447.75,117.25) .. controls (489.5,126) and (563.5,132) .. (615.75,118.25) ;

%Curve Lines [id:da5680041323614216]
\draw  [dash pattern={on 4.5pt off 4.5pt}]  (445,137.75) .. controls (486.75,146.5) and (560.75,152.5) .. (613,138.75) ;

%Curve Lines [id:da6858627825136685]
\draw  [dash pattern={on 4.5pt off 4.5pt}]  (448.25,186.25) .. controls (490,195) and (564,201) .. (616.25,187.25) ;

%Shape: Circle [id:dp6354079443867116]
\draw  [fill={rgb, 255:red, 0; green, 0; blue, 0 }  ,fill opacity=1 ] (324.5,112.25) .. controls (324.5,110.73) and (325.73,109.5) .. (327.25,109.5) .. controls (328.77,109.5) and (330,110.73) .. (330,112.25) .. controls (330,113.77) and (328.77,115) .. (327.25,115) .. controls (325.73,115) and (324.5,113.77) .. (324.5,112.25) -- cycle ;
%Shape: Circle [id:dp9480377429428992]
\draw  [color={rgb, 255:red, 0; green, 0; blue, 0 }  ,draw opacity=1 ][fill={rgb, 255:red, 0; green, 0; blue, 0 }  ,fill opacity=1 ] (324.5,188.25) .. controls (324.5,186.73) and (325.73,185.5) .. (327.25,185.5) .. controls (328.77,185.5) and (330,186.73) .. (330,188.25) .. controls (330,189.77) and (328.77,191) .. (327.25,191) .. controls (325.73,191) and (324.5,189.77) .. (324.5,188.25) -- cycle ;
%Straight Lines [id:da3769664738257297]
\draw    (216.5,118.25) -- (322.5,112.36) ;
\draw [shift={(324.5,112.25)}, rotate = 536.8199999999999] [fill={rgb, 255:red, 0; green, 0; blue, 0 }  ][line width=0.75]  [draw opacity=0] (8.93,-4.29) -- (0,0) -- (8.93,4.29) -- cycle    ;

%Straight Lines [id:da34301567862633187]
\draw    (216.5,138.75) -- (322.56,112.73) ;
\draw [shift={(324.5,112.25)}, rotate = 526.21] [fill={rgb, 255:red, 0; green, 0; blue, 0 }  ][line width=0.75]  [draw opacity=0] (8.93,-4.29) -- (0,0) -- (8.93,4.29) -- cycle    ;

%Straight Lines [id:da511002968884283]
\draw    (214.25,187.25) -- (322.85,113.37) ;
\draw [shift={(324.5,112.25)}, rotate = 505.77] [fill={rgb, 255:red, 0; green, 0; blue, 0 }  ][line width=0.75]  [draw opacity=0] (8.93,-4.29) -- (0,0) -- (8.93,4.29) -- cycle    ;

%Straight Lines [id:da6386075578834223]
\draw    (327.25,112.25) -- (443,117.17) ;
\draw [shift={(445,117.25)}, rotate = 182.43] [fill={rgb, 255:red, 0; green, 0; blue, 0 }  ][line width=0.75]  [draw opacity=0] (8.93,-4.29) -- (0,0) -- (8.93,4.29) -- cycle    ;

%Straight Lines [id:da6931571594796511]
\draw    (327.25,112.25) -- (443.05,137.33) ;
\draw [shift={(445,137.75)}, rotate = 192.22] [fill={rgb, 255:red, 0; green, 0; blue, 0 }  ][line width=0.75]  [draw opacity=0] (8.93,-4.29) -- (0,0) -- (8.93,4.29) -- cycle    ;

%Straight Lines [id:da5822599990246737]
\draw    (330,112.25) -- (443.82,185.17) ;
\draw [shift={(445.5,186.25)}, rotate = 212.65] [fill={rgb, 255:red, 0; green, 0; blue, 0 }  ][line width=0.75]  [draw opacity=0] (8.93,-4.29) -- (0,0) -- (8.93,4.29) -- cycle    ;

%Straight Lines [id:da19217305832692122]
\draw [color={rgb, 255:red, 0; green, 0; blue, 0 }  ,draw opacity=1 ][fill={rgb, 255:red, 208; green, 2; blue, 27 }  ,fill opacity=1 ]   (445,117.25) -- (331.7,187.2) ;
\draw [shift={(330,188.25)}, rotate = 328.31] [fill={rgb, 255:red, 0; green, 0; blue, 0 }  ,fill opacity=1 ][line width=0.75]  [draw opacity=0] (8.93,-4.29) -- (0,0) -- (8.93,4.29) -- cycle    ;

%Straight Lines [id:da9272787373883171]
\draw [color={rgb, 255:red, 0; green, 0; blue, 0 }  ,draw opacity=1 ]   (445,137.75) -- (331.83,187.45) ;
\draw [shift={(330,188.25)}, rotate = 336.28999999999996] [fill={rgb, 255:red, 0; green, 0; blue, 0 }  ,fill opacity=1 ][line width=0.75]  [draw opacity=0] (8.93,-4.29) -- (0,0) -- (8.93,4.29) -- cycle    ;

%Straight Lines [id:da8245692252188541]
\draw [color={rgb, 255:red, 0; green, 0; blue, 0 }  ,draw opacity=1 ][fill={rgb, 255:red, 208; green, 2; blue, 27 }  ,fill opacity=1 ]   (445.5,186.25) -- (332,188.22) ;
\draw [shift={(330,188.25)}, rotate = 359.01] [fill={rgb, 255:red, 0; green, 0; blue, 0 }  ,fill opacity=1 ][line width=0.75]  [draw opacity=0] (8.93,-4.29) -- (0,0) -- (8.93,4.29) -- cycle    ;

%Straight Lines [id:da6842551199667009]
\draw [color={rgb, 255:red, 0; green, 0; blue, 0 }  ,draw opacity=1 ]   (324.5,188.25) -- (219,187.27) ;
\draw [shift={(217,187.25)}, rotate = 360.53] [fill={rgb, 255:red, 0; green, 0; blue, 0 }  ,fill opacity=1 ][line width=0.75]  [draw opacity=0] (8.93,-4.29) -- (0,0) -- (8.93,4.29) -- cycle    ;

%Straight Lines [id:da20056369386609285]
\draw [color={rgb, 255:red, 0; green, 0; blue, 0 }  ,draw opacity=1 ]   (324.5,188.25) -- (218.32,139.58) ;
\draw [shift={(216.5,138.75)}, rotate = 384.62] [fill={rgb, 255:red, 0; green, 0; blue, 0 }  ,fill opacity=1 ][line width=0.75]  [draw opacity=0] (8.93,-4.29) -- (0,0) -- (8.93,4.29) -- cycle    ;

%Straight Lines [id:da2115934327997595]
\draw [color={rgb, 255:red, 0; green, 0; blue, 0 }  ,draw opacity=1 ]   (324.5,188.25) -- (218.18,119.34) ;
\draw [shift={(216.5,118.25)}, rotate = 392.95] [fill={rgb, 255:red, 0; green, 0; blue, 0 }  ,fill opacity=1 ][line width=0.75]  [draw opacity=0] (8.93,-4.29) -- (0,0) -- (8.93,4.29) -- cycle    ;

% Text Node
\draw (45.96,158.73) node  [align=left] {$\displaystyle \vdots $};
% Text Node
\draw (91.96,158.73) node  [align=left] {$\displaystyle \vdots $};
% Text Node
\draw (213.96,159.73) node  [align=left] {$\displaystyle \vdots $};
% Text Node
\draw (151.76,160.93) node [rotate=-89.44] [align=left] {$\displaystyle \vdots $};
% Text Node
\draw (447.96,158.73) node  [align=left] {$\displaystyle \vdots $};
% Text Node
\draw (493.96,158.73) node  [align=left] {$\displaystyle \vdots $};
% Text Node
\draw (615.96,159.73) node  [align=left] {$\displaystyle \vdots $};
% Text Node
\draw (553.76,160.93) node [rotate=-89.44] [align=left] {$\displaystyle \vdots $};
% Text Node
\draw (130,82) node [scale=0.9] [align=left] {group $\displaystyle i$};
% Text Node
\draw (533,83) node [scale=0.9] [align=left] {group $\displaystyle j$};
% Text Node
\draw (329,84) node [scale=0.9] [align=left] {connection node};
% Text Node
\draw (330,212) node [scale=0.9] [align=left] {relink node};

\end{tikzpicture}
\caption{Connection between two cells of group $i$ and $j$. \label{ConnectionCells}}

\end{figure}
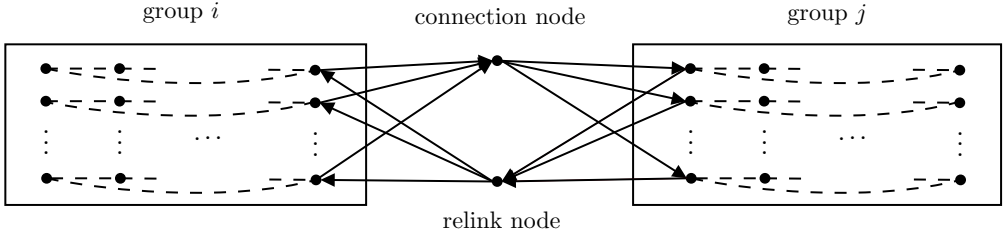

{ \noindent  Observe that any arc in $G$ either belongs to an inner or an outer cycle. The quadratic cost of a pair of successive arcs $(e,f)$ where $e$ belongs to an inner cycle and $f$ to an outer cycle or vice versa, is set to $\infty$. It remains to specify the interaction cost between successive arcs on an outer cycle. We only specify the quadratic cost between the arcs entering and leaving the connection node, other costs are set to zero.

Let $i$ and $j$ be two distinct groups associated with facility $i$ and $j$, respectively. Let a node in group $i$ be given by $(ik')$ with $k' \in P$. Similarly, a node in group $j$ is given by $(jl')$ with $l' \in P$. Let $e_{ik'}$ denote the arc between $(ik')$ and the connection node and let $f_{jl'}$ denote the arc between the connection node and $(jl')$. Then the quadratic cost between $e_{ik'}$ and $f_{jl'}$ is defined as follows:
\begin{align*}
Q_{e_{ik'}, f_{jl'}} := \begin{cases} d_{k'l'}w_{ij} + d_{l'k'}w_{ji} & \text{if $k' \neq l'$} \\
\infty & \text{otherwise.}
\end{cases}
\end{align*}
We repeat this construction for any two connected cells. Figure \ref{OverviewG} gives a simplified overview of $G$ for $n = 4$. The circles in the center denote the connections between the cells, where the connection and relink nodes are drawn using the symbols `$\bullet$' and `$\ast$', respectively. The graph $G$ has $n^2(n-1) + 2{n \choose 2} = \mathcal{O}(n^3)$ nodes and $n^2(n-1) + 4{n \choose 2} = \mathcal{O}(n^3)$ arcs.  }

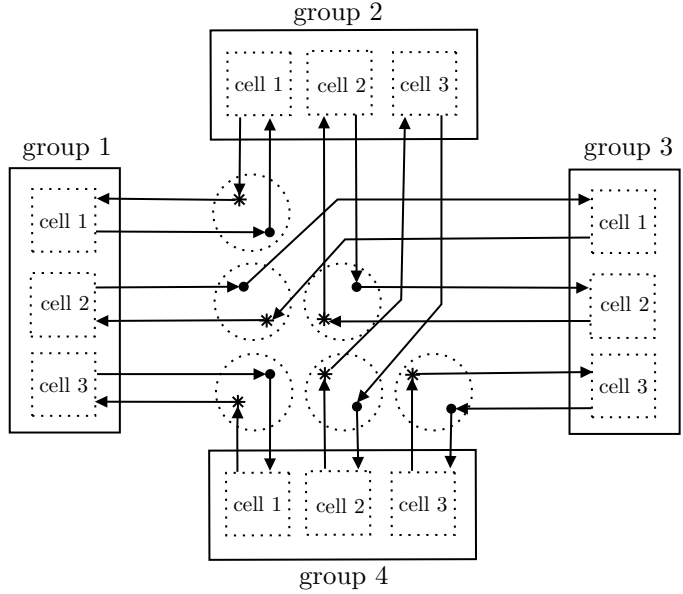
\begin{figure}[h]
\centering

\tikzset{every picture/.style={line width=0.75pt}} %set default line width to 0.75pt

\begin{tikzpicture}[scale = 0.7, x=0.75pt,y=0.75pt,yscale=-1,xscale=1]
%uncomment if require: \path (0,500); %set diagram left start at 0, and has height of 500

%Shape: Rectangle [id:dp9490693529128986]
\draw   (74,164.5) -- (152.5,164.5) -- (152.5,354.5) -- (74,354.5) -- cycle ;
%Shape: Square [id:dp7066518635341619]
\draw  [dash pattern={on 0.84pt off 2.51pt}] (90,179.5) -- (135,179.5) -- (135,224.5) -- (90,224.5) -- cycle ;
%Shape: Square [id:dp5871963278683265]
\draw  [dash pattern={on 0.84pt off 2.51pt}] (89,239.83) -- (134.67,239.83) -- (134.67,285.5) -- (89,285.5) -- cycle ;
%Shape: Square [id:dp3073264863810361]
\draw  [dash pattern={on 0.84pt off 2.51pt}] (90,297.5) -- (135,297.5) -- (135,342.5) -- (90,342.5) -- cycle ;
%Shape: Rectangle [id:dp4683007817301237]
\draw   (473,165.5) -- (551.5,165.5) -- (551.5,355.5) -- (473,355.5) -- cycle ;
%Shape: Square [id:dp5842885583943938]
\draw  [dash pattern={on 0.84pt off 2.51pt}] (489,180.5) -- (534,180.5) -- (534,225.5) -- (489,225.5) -- cycle ;
%Shape: Square [id:dp7528768939912094]
\draw  [dash pattern={on 0.84pt off 2.51pt}] (488,240.83) -- (533.67,240.83) -- (533.67,286.5) -- (488,286.5) -- cycle ;
%Shape: Square [id:dp7695029460908998]
\draw  [dash pattern={on 0.84pt off 2.51pt}] (489,298.5) -- (534,298.5) -- (534,343.5) -- (489,343.5) -- cycle ;
%Shape: Rectangle [id:dp7683947984899713]
\draw   (407.15,65) -- (407.35,143.5) -- (217.35,144) -- (217.15,65.5) -- cycle ;
%Shape: Square [id:dp8862342697896071]
\draw  [dash pattern={on 0.84pt off 2.51pt}] (392.19,81.04) -- (392.31,126.04) -- (347.31,126.16) -- (347.19,81.16) -- cycle ;
%Shape: Square [id:dp414051808262043]
\draw  [dash pattern={on 0.84pt off 2.51pt}] (331.85,80.2) -- (331.97,125.87) -- (286.31,125.99) -- (286.19,80.32) -- cycle ;
%Shape: Square [id:dp5190748050756668]
\draw  [dash pattern={on 0.84pt off 2.51pt}] (274.19,81.35) -- (274.31,126.35) -- (229.31,126.47) -- (229.19,81.47) -- cycle ;
%Shape: Rectangle [id:dp9670276765574222]
\draw   (406.15,367) -- (406.35,445.5) -- (216.35,446) -- (216.15,367.5) -- cycle ;
%Shape: Square [id:dp9481038459850923]
\draw  [dash pattern={on 0.84pt off 2.51pt}] (391.19,383.04) -- (391.31,428.04) -- (346.31,428.16) -- (346.19,383.16) -- cycle ;
%Shape: Square [id:dp7243591231933018]
\draw  [dash pattern={on 0.84pt off 2.51pt}] (330.85,382.2) -- (330.97,427.87) -- (285.31,427.99) -- (285.19,382.32) -- cycle ;
%Shape: Square [id:dp10818067957172484]
\draw  [dash pattern={on 0.84pt off 2.51pt}] (273.19,383.35) -- (273.31,428.35) -- (228.31,428.47) -- (228.19,383.47) -- cycle ;
%Straight Lines [id:da9452898234622631]
\draw    (135.5,210) -- (254.48,210.57) ;
\draw [shift={(256.47,210.58)}, rotate = 180.27] [fill={rgb, 255:red, 0; green, 0; blue, 0 }  ][line width=0.75]  [draw opacity=0] (8.93,-4.29) -- (0,0) -- (8.93,4.29) -- cycle    ;

%Straight Lines [id:da38459452195144794]
\draw    (135.25,250.25) -- (235.9,249.7) ;
\draw [shift={(237.9,249.69)}, rotate = 539.69] [fill={rgb, 255:red, 0; green, 0; blue, 0 }  ][line width=0.75]  [draw opacity=0] (8.93,-4.29) -- (0,0) -- (8.93,4.29) -- cycle    ;

%Straight Lines [id:da08552331031904714]
\draw    (136.25,312.5) -- (254.75,312.5) ;
\draw [shift={(256.75,312.5)}, rotate = 180] [fill={rgb, 255:red, 0; green, 0; blue, 0 }  ][line width=0.75]  [draw opacity=0] (8.93,-4.29) -- (0,0) -- (8.93,4.29) -- cycle    ;

%Straight Lines [id:da5720752356058676]
\draw    (259.35,207.7) -- (259.5,130.5) ;
\draw [shift={(259.5,128.5)}, rotate = 450.11] [fill={rgb, 255:red, 0; green, 0; blue, 0 }  ][line width=0.75]  [draw opacity=0] (8.93,-4.29) -- (0,0) -- (8.93,4.29) -- cycle    ;

%Straight Lines [id:da39511226242949893]
\draw    (240.77,249.69) -- (307.69,187.02) -- (486.35,187.02) ;
\draw [shift={(488.35,187.02)}, rotate = 180] [fill={rgb, 255:red, 0; green, 0; blue, 0 }  ][line width=0.75]  [draw opacity=0] (8.93,-4.29) -- (0,0) -- (8.93,4.29) -- cycle    ;

%Straight Lines [id:da9150945083644011]
\draw    (259.63,315.38) -- (259.5,380) ;
\draw [shift={(259.5,382)}, rotate = 270.11] [fill={rgb, 255:red, 0; green, 0; blue, 0 }  ][line width=0.75]  [draw opacity=0] (8.93,-4.29) -- (0,0) -- (8.93,4.29) -- cycle    ;

%Straight Lines [id:da7766360387693743]
\draw [color={rgb, 255:red, 0; green, 0; blue, 0 }  ,draw opacity=1 ]   (298,129) -- (298.3,271.8) ;

\draw [shift={(298,127)}, rotate = 89.88] [fill={rgb, 255:red, 0; green, 0; blue, 0 }  ,fill opacity=1 ][line width=0.75]  [draw opacity=0] (8.93,-4.29) -- (0,0) -- (8.93,4.29) -- cycle    ;
%Straight Lines [id:da12603057027741316]
\draw [color={rgb, 255:red, 0; green, 0; blue, 0 }  ,draw opacity=1 ]   (303.17,274.67) -- (488,274.33) ;

\draw [shift={(301.18,274.68)}, rotate = 359.9] [fill={rgb, 255:red, 0; green, 0; blue, 0 }  ,fill opacity=1 ][line width=0.75]  [draw opacity=0] (8.93,-4.29) -- (0,0) -- (8.93,4.29) -- cycle    ;
%Straight Lines [id:da7887447101407727]
\draw    (381.75,126.5) -- (381.69,262.35) -- (322.8,331.48) ;
\draw [shift={(321.5,333)}, rotate = 310.43] [fill={rgb, 255:red, 0; green, 0; blue, 0 }  ][line width=0.75]  [draw opacity=0] (8.93,-4.29) -- (0,0) -- (8.93,4.29) -- cycle    ;

%Straight Lines [id:da6187659762529862]
\draw    (321.5,338.75) -- (322.45,379) ;
\draw [shift={(322.5,381)}, rotate = 268.64] [fill={rgb, 255:red, 0; green, 0; blue, 0 }  ][line width=0.75]  [draw opacity=0] (8.93,-4.29) -- (0,0) -- (8.93,4.29) -- cycle    ;

%Straight Lines [id:da9512995630282632]
\draw    (488.5,337) -- (393.62,337.37) ;
\draw [shift={(391.63,337.38)}, rotate = 359.78] [fill={rgb, 255:red, 0; green, 0; blue, 0 }  ][line width=0.75]  [draw opacity=0] (8.93,-4.29) -- (0,0) -- (8.93,4.29) -- cycle    ;

%Straight Lines [id:da17703583870958606]
\draw    (388.75,340.25) -- (388.04,378.25) ;
\draw [shift={(388,380.25)}, rotate = 271.07] [fill={rgb, 255:red, 0; green, 0; blue, 0 }  ][line width=0.75]  [draw opacity=0] (8.93,-4.29) -- (0,0) -- (8.93,4.29) -- cycle    ;

%Straight Lines [id:da21991393876890242]
\draw [color={rgb, 255:red, 0; green, 0; blue, 0 }  ,draw opacity=1 ]   (237.77,182.5) -- (238.25,128) ;

\draw [shift={(237.75,184.5)}, rotate = 270.51] [fill={rgb, 255:red, 0; green, 0; blue, 0 }  ,fill opacity=1 ][line width=0.75]  [draw opacity=0] (8.93,-4.29) -- (0,0) -- (8.93,4.29) -- cycle    ;
%Straight Lines [id:da9192035298254426]
\draw [color={rgb, 255:red, 0; green, 0; blue, 0 }  ,draw opacity=1 ]   (138,186.35) -- (234.88,187.38) ;

\draw [shift={(136,186.33)}, rotate = 0.6] [fill={rgb, 255:red, 0; green, 0; blue, 0 }  ,fill opacity=1 ][line width=0.75]  [draw opacity=0] (8.93,-4.29) -- (0,0) -- (8.93,4.29) -- cycle    ;
%Straight Lines [id:da8049048423870555]
\draw [color={rgb, 255:red, 0; green, 0; blue, 0 }  ,draw opacity=1 ]   (136.67,332.34) -- (233.75,332.5) ;

\draw [shift={(134.67,332.33)}, rotate = 0.1] [fill={rgb, 255:red, 0; green, 0; blue, 0 }  ,fill opacity=1 ][line width=0.75]  [draw opacity=0] (8.93,-4.29) -- (0,0) -- (8.93,4.29) -- cycle    ;
%Straight Lines [id:da8745087421117708]
\draw [color={rgb, 255:red, 0; green, 0; blue, 0 }  ,draw opacity=1 ]   (236.31,382.75) -- (236.19,352.25) -- (236.57,337.37) ;
\draw [shift={(236.63,335.38)}, rotate = 451.46] [fill={rgb, 255:red, 0; green, 0; blue, 0 }  ,fill opacity=1 ][line width=0.75]  [draw opacity=0] (8.93,-4.29) -- (0,0) -- (8.93,4.29) -- cycle    ;

%Straight Lines [id:da9791544766989726]
\draw [color={rgb, 255:red, 0; green, 0; blue, 0 }  ,draw opacity=1 ]   (363.63,312.38) -- (487.75,311.02) ;
\draw [shift={(489.75,311)}, rotate = 539.38] [fill={rgb, 255:red, 0; green, 0; blue, 0 }  ,fill opacity=1 ][line width=0.75]  [draw opacity=0] (8.93,-4.29) -- (0,0) -- (8.93,4.29) -- cycle    ;

%Straight Lines [id:da16920279297616303]
\draw [color={rgb, 255:red, 0; green, 0; blue, 0 }  ,draw opacity=1 ]   (360.75,382) -- (360.75,317.25) ;
\draw [shift={(360.75,315.25)}, rotate = 450] [fill={rgb, 255:red, 0; green, 0; blue, 0 }  ,fill opacity=1 ][line width=0.75]  [draw opacity=0] (8.93,-4.29) -- (0,0) -- (8.93,4.29) -- cycle    ;

%Straight Lines [id:da4532196641467767]
\draw [color={rgb, 255:red, 0; green, 0; blue, 0 }  ,draw opacity=1 ]   (137.33,274.32) -- (255.27,273.69) ;

\draw [shift={(135.33,274.33)}, rotate = 359.69] [fill={rgb, 255:red, 0; green, 0; blue, 0 }  ,fill opacity=1 ][line width=0.75]  [draw opacity=0] (8.93,-4.29) -- (0,0) -- (8.93,4.29) -- cycle    ;
%Straight Lines [id:da4107561302335634]
\draw [color={rgb, 255:red, 0; green, 0; blue, 0 }  ,draw opacity=1 ]   (262.36,272.2) -- (313.02,215.69) -- (487.69,214.35) ;

\draw [shift={(261.02,273.69)}, rotate = 311.88] [fill={rgb, 255:red, 0; green, 0; blue, 0 }  ,fill opacity=1 ][line width=0.75]  [draw opacity=0] (8.93,-4.29) -- (0,0) -- (8.93,4.29) -- cycle    ;
%Shape: Circle [id:dp288797257999422]
\draw  [fill={rgb, 255:red, 0; green, 0; blue, 0 }  ,fill opacity=1 ] (256.47,210.58) .. controls (256.47,208.99) and (257.76,207.7) .. (259.35,207.7) .. controls (260.94,207.7) and (262.22,208.99) .. (262.22,210.58) .. controls (262.22,212.16) and (260.94,213.45) .. (259.35,213.45) .. controls (257.76,213.45) and (256.47,212.16) .. (256.47,210.58) -- cycle ;
%Shape: Circle [id:dp8890991771544803]
\draw  [fill={rgb, 255:red, 0; green, 0; blue, 0 }  ,fill opacity=1 ] (237.9,249.69) .. controls (237.9,248.1) and (239.18,246.81) .. (240.77,246.81) .. controls (242.36,246.81) and (243.65,248.1) .. (243.65,249.69) .. controls (243.65,251.28) and (242.36,252.56) .. (240.77,252.56) .. controls (239.18,252.56) and (237.9,251.28) .. (237.9,249.69) -- cycle ;
%Shape: Circle [id:dp3789869681887996]
\draw  [fill={rgb, 255:red, 0; green, 0; blue, 0 }  ,fill opacity=1 ] (256.75,312.5) .. controls (256.75,310.91) and (258.04,309.63) .. (259.63,309.63) .. controls (261.21,309.63) and (262.5,310.91) .. (262.5,312.5) .. controls (262.5,314.09) and (261.21,315.38) .. (259.63,315.38) .. controls (258.04,315.38) and (256.75,314.09) .. (256.75,312.5) -- cycle ;
%Shape: Circle [id:dp1189925580910951]
\draw  [fill={rgb, 255:red, 0; green, 0; blue, 0 }  ,fill opacity=1 ] (318.38,249.88) .. controls (318.38,248.29) and (319.66,247) .. (321.25,247) .. controls (322.84,247) and (324.13,248.29) .. (324.13,249.88) .. controls (324.13,251.46) and (322.84,252.75) .. (321.25,252.75) .. controls (319.66,252.75) and (318.38,251.46) .. (318.38,249.88) -- cycle ;
%Straight Lines [id:da5101991241688519]
\draw    (320.75,126) -- (321.24,245) ;
\draw [shift={(321.25,247)}, rotate = 269.76] [fill={rgb, 255:red, 0; green, 0; blue, 0 }  ][line width=0.75]  [draw opacity=0] (8.93,-4.29) -- (0,0) -- (8.93,4.29) -- cycle    ;

%Straight Lines [id:da05897598385690861]
\draw    (324.13,249.88) -- (484.75,250.49) ;
\draw [shift={(486.75,250.5)}, rotate = 180.22] [fill={rgb, 255:red, 0; green, 0; blue, 0 }  ][line width=0.75]  [draw opacity=0] (8.93,-4.29) -- (0,0) -- (8.93,4.29) -- cycle    ;

%Shape: Circle [id:dp6650372975987835]
\draw  [fill={rgb, 255:red, 0; green, 0; blue, 0 }  ,fill opacity=1 ] (318.63,335.88) .. controls (318.63,334.29) and (319.91,333) .. (321.5,333) .. controls (323.09,333) and (324.38,334.29) .. (324.38,335.88) .. controls (324.38,337.46) and (323.09,338.75) .. (321.5,338.75) .. controls (319.91,338.75) and (318.63,337.46) .. (318.63,335.88) -- cycle ;
%Straight Lines [id:da7114623142749121]
\draw [color={rgb, 255:red, 0; green, 0; blue, 0 }  ,draw opacity=1 ]   (298.18,317.5) -- (298.75,380.5) ;

\draw [shift={(298.17,315.5)}, rotate = 89.49] [fill={rgb, 255:red, 0; green, 0; blue, 0 }  ,fill opacity=1 ][line width=0.75]  [draw opacity=0] (8.93,-4.29) -- (0,0) -- (8.93,4.29) -- cycle    ;
%Straight Lines [id:da548053032754751]
\draw [color={rgb, 255:red, 0; green, 0; blue, 0 }  ,draw opacity=1 ]   (355.71,129.5) -- (353.02,259.02) -- (303.13,308.63) ;

\draw [shift={(355.75,127.5)}, rotate = 91.19] [fill={rgb, 255:red, 0; green, 0; blue, 0 }  ,fill opacity=1 ][line width=0.75]  [draw opacity=0] (8.93,-4.29) -- (0,0) -- (8.93,4.29) -- cycle    ;
%Shape: Circle [id:dp3678594235718611]
\draw  [fill={rgb, 255:red, 0; green, 0; blue, 0 }  ,fill opacity=1 ] (385.88,337.38) .. controls (385.88,335.79) and (387.16,334.5) .. (388.75,334.5) .. controls (390.34,334.5) and (391.63,335.79) .. (391.63,337.38) .. controls (391.63,338.96) and (390.34,340.25) .. (388.75,340.25) .. controls (387.16,340.25) and (385.88,338.96) .. (385.88,337.38) -- cycle ;
%Shape: Ellipse [id:dp19146892828979345]
\draw  [dash pattern={on 0.84pt off 2.51pt}] (219,196.5) .. controls (219,181.31) and (231.2,169) .. (246.25,169) .. controls (261.3,169) and (273.5,181.31) .. (273.5,196.5) .. controls (273.5,211.69) and (261.3,224) .. (246.25,224) .. controls (231.2,224) and (219,211.69) .. (219,196.5) -- cycle ;
%Shape: Ellipse [id:dp7189202015970788]
\draw  [dash pattern={on 0.84pt off 2.51pt}] (220,260.5) .. controls (220,245.31) and (232.2,233) .. (247.25,233) .. controls (262.3,233) and (274.5,245.31) .. (274.5,260.5) .. controls (274.5,275.69) and (262.3,288) .. (247.25,288) .. controls (232.2,288) and (220,275.69) .. (220,260.5) -- cycle ;
%Shape: Ellipse [id:dp7423739522942483]
\draw  [dash pattern={on 0.84pt off 2.51pt}] (221,325.5) .. controls (221,310.31) and (233.2,298) .. (248.25,298) .. controls (263.3,298) and (275.5,310.31) .. (275.5,325.5) .. controls (275.5,340.69) and (263.3,353) .. (248.25,353) .. controls (233.2,353) and (221,340.69) .. (221,325.5) -- cycle ;
%Shape: Ellipse [id:dp5201820048660335]
\draw  [dash pattern={on 0.84pt off 2.51pt}] (284,260.5) .. controls (284,245.31) and (296.2,233) .. (311.25,233) .. controls (326.3,233) and (338.5,245.31) .. (338.5,260.5) .. controls (338.5,275.69) and (326.3,288) .. (311.25,288) .. controls (296.2,288) and (284,275.69) .. (284,260.5) -- cycle ;
%Shape: Ellipse [id:dp8569777336792928]
\draw  [dash pattern={on 0.84pt off 2.51pt}] (285,325.5) .. controls (285,310.31) and (297.2,298) .. (312.25,298) .. controls (327.3,298) and (339.5,310.31) .. (339.5,325.5) .. controls (339.5,340.69) and (327.3,353) .. (312.25,353) .. controls (297.2,353) and (285,340.69) .. (285,325.5) -- cycle ;
%Shape: Ellipse [id:dp08000767133134667]
\draw  [dash pattern={on 0.84pt off 2.51pt}] (349,325.5) .. controls (349,310.31) and (361.2,298) .. (376.25,298) .. controls (391.3,298) and (403.5,310.31) .. (403.5,325.5) .. controls (403.5,340.69) and (391.3,353) .. (376.25,353) .. controls (361.2,353) and (349,340.69) .. (349,325.5) -- cycle ;
\draw   (232.4,187) -- (242.9,187)(237.65,182) -- (237.65,192) ;
\draw   (233.89,183.34) -- (241.41,190.66)(241.14,183.41) -- (234.16,190.59) ;
\draw   (251.9,274) -- (262.4,274)(257.15,269) -- (257.15,279) ;
\draw   (253.39,270.34) -- (260.91,277.66)(260.64,270.41) -- (253.66,277.59) ;
\draw   (292.9,273) -- (303.4,273)(298.15,268) -- (298.15,278) ;
\draw   (294.39,269.34) -- (301.91,276.66)(301.64,269.41) -- (294.66,276.59) ;
\draw   (231.4,332.5) -- (241.9,332.5)(236.65,327.5) -- (236.65,337.5) ;
\draw   (232.89,328.84) -- (240.41,336.16)(240.14,328.91) -- (233.16,336.09) ;
\draw   (293.4,312.5) -- (303.9,312.5)(298.65,307.5) -- (298.65,317.5) ;
\draw   (294.89,308.84) -- (302.41,316.16)(302.14,308.91) -- (295.16,316.09) ;
\draw   (355.9,313) -- (366.4,313)(361.15,308) -- (361.15,318) ;
\draw   (357.39,309.34) -- (364.91,316.66)(364.64,309.41) -- (357.66,316.59) ;

% Text Node
\draw (112.5,202) node [scale=0.8] [align=left] {cell 1};
% Text Node
\draw (113.5,261) node [scale=0.8] [align=left] {cell 2};
% Text Node
\draw (112.5,320) node [scale=0.8] [align=left] {cell 3};
% Text Node
\draw (511.5,203) node [scale=0.8] [align=left] {cell 1};
% Text Node
\draw (512.5,262) node [scale=0.8] [align=left] {cell 2};
% Text Node
\draw (511.5,321) node [scale=0.8] [align=left] {cell 3};
% Text Node
\draw (251.75,103.91) node [scale=0.8,rotate=-0.14] [align=left] {cell 1};
% Text Node
\draw (310.75,104.76) node [scale=0.8,rotate=-0.34] [align=left] {cell 2};
% Text Node
\draw (369.75,103.6) node [scale=0.8,rotate=-0.36] [align=left] {cell 3};
% Text Node
\draw (250.75,405.91) node [scale=0.8,rotate=-0.14] [align=left] {cell 1};
% Text Node
\draw (309.75,406.76) node [scale=0.8,rotate=-0.34] [align=left] {cell 2};
% Text Node
\draw (368.75,405.6) node [scale=0.8,rotate=-0.36] [align=left] {cell 3};
% Text Node
\draw (115,151) node  [align=left] {group 1};
% Text Node
\draw (308,54) node  [align=left] {group 2};
% Text Node
\draw (515,151) node  [align=left] {group 3};
% Text Node
\draw (312,459) node  [align=left] {group 4};

\end{tikzpicture}
\caption{Simplified overview of $G$ for $n = 4$. \label{OverviewG}}
\end{figure}
\hfill \break
\noindent It remains to show that there exists a cycle cover in $G$ with cost at most $K$ if and only if there exists a feasible assignment in $\mathcal{I}$ with cost at most $K$.

First, we verify that a cycle cover with finite cost in $G$ corresponds to a feasible assignment of facilities and locations. Note that the connection and relink nodes must be covered by an outer cycle, since any other cycle would induce a cost of $\infty$. Besides the connection and relink node, this cycle contains two nodes that each correspond to an assignment of a different facility. Moreover, these facilities must be assigned to different locations, otherwise this implies an infinite cost. The nodes in a cell that are not covered by an outer cycle must be covered by an inner cycle. Consequently,  nodes on these inner cycles cannot belong to an outer cycle. Therefore, for each group exactly one location is `chosen' to be on an outer cycle, i.e., each facility is assigned to some location. Moreover, no two facilities are assigned to the same location, since this would imply a cost of $\infty$ at the connection node connecting these groups. We conclude that a cycle cover with finite cost corresponds to a feasible assignment and vice versa.

Observe that the objective value of a feasible assignment in the \textsc{QAP} instance equals the total cost of the corresponding cycle cover in the \textsc{QCCP} instance. Namely, the latter cost equals the sum of quadratic costs incurred at the ${n \choose 2}$ connection nodes. If facility $i$ (resp.~$j$) where $i \neq j$ is assigned to location $k'$ (resp.~$l'$) where $k' \neq l'$, then this cost equals $d_{k'l'}w_{ij} + d_{l'k'}w_{ji}$. Taking the sum over all connections, the total cost of the cycle cover equals $\sum_{i =1}^n\sum_{j =1}^n d_{\pi(i)\pi(j)}w_{ij}$ where $\pi : F \rightarrow P$ is the bijection corresponding to the assignment.

We conclude that there exists an assignment for the \textsc{QAP} instance with cost at most $K$ if and only if there exists a feasible cycle cover in the corresponding \textsc{QCCP} instance of cost at most $K$. Since the \textsc{QAP} is strongly $\mathcal{NP}$-hard and the numbers defined in the reduction are polynomially bounded (infinite costs can be replaced by an appropriate value  which is  polynomially bounded in the largest number and the size of $\mathcal{I}$), we conclude that the \textsc{QCCP} is strongly $\mathcal{NP}$-hard. \\ \\
Moreover, as the \textsc{QAP} cannot be approximated within any constant factor \cite{SahniGonzalez} and the reduction above is clearly gap preserving, the result follows.

\end{proof}

\section{The QCCP Linearization Problem} \label{SufficientConditions}
In this section, we formally introduce the linearization problem for the \textsc{QCCP} and derive various sufficient conditions for an instance of the \textsc{QCCP} to be linearizable. Several  of these conditions are used later on to construct lower bounds for the optimal value of the \textsc{QCCP}.
 \\ \\
Let us consider the (linear) cycle cover problem. Given a cost function $p: A \rightarrow \mathbb{R}$, the \textsc{CCP} is the problem of finding a cycle cover of minimum cost. It can be written as follows:
\begin{align} \label{linearCCP}
\min_{x \in \{0,1\}^{m}}\left \{p^Tx \, | \, \, x\in X \right \},
\end{align}
where $X$ is given  in \eqref{X}.
Since the constraint set of $X$ is totally unimodular, it follows that the \textsc{CCP} is solvable in polynomial time. We call an instance $\mathcal{I} = (G,Q)$ of the \textsc{QCCP} linearizable if there exists a cost vector $p \in \mathbb{R}^m$ such that $x^TQx = p^Tx$ for all cycle covers $x \in X$. If such a vector $p$ exists, we call $p$ a linearization vector of $Q$ for the \textsc{QCCP}.

The \textsc{QCCP} linearization problem can be stated as follows: Given an instance $\mathcal{I} = (G,Q)$ of the \textsc{QCCP}, verify whether it is linearizable and, if yes, compute a linearization vector $p$ of $Q$.

In the remaining part of this section we provide sufficient conditions for the quadratic cost matrix $Q$ to be linearizable.
The first type of sufficient conditions for linearizability are related to the constant value property (CVP) for cost vectors or cost matrices. The definition associated with the  \textsc{CCP} is stated below.

\begin{definition} A cost matrix $p$ satisfies the constant value property  if $p^Tx = p^T\bar{x}$ for all cycle covers $x, \bar{x} \in X$.
\end{definition}
A similar definition holds for the quadratic version of the problem.
\begin{definition}
A cost matrix $Q$ satisfies the constant value property if $x^TQx = \bar{x}^TQ\bar{x}$ for all cycle covers $x, \bar{x} \in X$.
\end{definition}
When $Q$ satisfies the constant value property then $Q$ is linearizable, as stated by the following proposition.

\begin{proposition}
Assume that $Q$ satisfies the constant value property, i.e. $x^TQx = \xi$ where $\xi \in \mathbb{R}$ for all $x \in X$, then $Q$ is linearizable with cost vector $p$ defined as $p_e = \xi / n$ for all $e \in A$.
\end{proposition}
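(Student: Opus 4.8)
The plan is to exploit the single structural feature of the feasible set that every cycle cover uses exactly $n$ arcs, so that a constant cost vector evaluates to a fixed number across all of $X$. Once this is in place, matching that number against the assumed constant objective value $\xi$ is immediate.

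First I would establish the cardinality identity. Summing the out-degree constraints $\sum_{e \in \delta^+(i)} x_e = 1$ from \eqref{X} over all nodes $i \in N$, and using that each arc $e$ has a unique starting node $e^+$ (so that each $x_e$ is counted exactly once), gives
\begin{align*}
\sum_{e \in A} x_e = \sum_{i \in N} \sum_{e \in \delta^+(i)} x_e = \sum_{i \in N} 1 = n
\end{align*}
for every $x \in X$. This reflects the fact that a directed $2$-factor selects precisely one outgoing arc per node.

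With this identity in hand, the proposed vector $p$, whose entries all equal $\xi / n$, satisfies
\begin{align*}
p^T x = \frac{\xi}{n} \sum_{e \in A} x_e = \frac{\xi}{n} \cdot n = \xi
\end{align*}
for every cycle cover $x \in X$. On the other hand, the assumed constant value property gives $x^T Q x = \xi$ for every $x \in X$. Combining the two equalities yields $x^T Q x = p^T x$ for all $x \in X$, which is exactly the defining property of a linearization vector; hence $p$ linearizes $Q$ and $Q$ is linearizable.

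The argument involves no genuine obstacle: its entire content is the cardinality identity $\sum_{e \in A} x_e = n$, which is what forces a uniform cost vector to integrate to the same value on every cycle cover. The only point deserving care is therefore the justification that each $x_e$ is counted exactly once when summing the degree constraints, i.e.\ that the sets $\delta^+(i)$ partition $A$; the incoming-degree constraints could be used symmetrically, but the outgoing version already suffices.
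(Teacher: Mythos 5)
Your proof is correct and follows exactly the paper's argument: the paper's one-line proof also rests on the fact that every cycle cover has exactly $n$ nonzero entries, so $x^Tp = n\cdot\xi/n = \xi = x^TQx$. You merely spell out the cardinality identity $\sum_{e\in A}x_e = n$ via the degree constraints, which the paper takes as given.
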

\begin{proof}
For all $x \in X$ we have $x^TQx = \xi = n \frac{\xi}{n} = x^Tp$ since $x$ has exactly $n$ nonzero elements.
\end{proof}
A more restricted version of the CVP is obtained when the interaction cost of a single arc  with its successor or predecessor is constant for all cycle covers $x \in X$. We refer to these properties as the row and column constant value property, respectively. These definitions are based on similar definitions by {Punnen et al.\ \cite{Punnen}} for the \textsc{QTSP}.

\begin{definition}
A cost matrix $Q$ satisfies the row CVP if there exists some $b \in \mathbb{R}^m$ such that for all arcs $e \in A$ we have $Q_{ef} = Q_{eg} = b_e$ for all $f, g \in \delta^+(e^-)$ and $Q_{ef} = 0$ otherwise. \\
A cost matrix $Q$ satisfies the column CVP if there exists some $c \in \mathbb{R}^m$ such that for all arcs $e \in A$ we have $Q_{fe} = Q_{ge} = c_e$ for all $f, g \in \delta^-(e^+)$ and $Q_{fe} = 0$ otherwise.
\end{definition}
It is not hard to verify that an instance of the \textsc{QCCP} is linearizable if the cost matrix $Q$ satisfies the row or column CVP.

\begin{proposition}
If $Q$ satisfies the row CVP or the column CVP, then $Q$ is linearizable.
\end{proposition}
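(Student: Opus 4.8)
The plan is to treat the two cases separately; they are mirror images of one another (row versus column), so it suffices to describe the argument in detail for one of them and to indicate the dual modification for the other. In each case I will exhibit an explicit linearization vector and verify the defining identity $x^TQx = p^Tx$ directly, by expanding the quadratic form and invoking the degree constraints that define $X$.

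For the row CVP, I take $p = b$. Expanding the quadratic form and using that $Q_{ef}$ vanishes unless $f$ is a successor of $e$ (equivalently $f \in \delta^+(e^-)$), one obtains
\begin{equation*}
x^TQx = \sum_{e \in A} \sum_{f \in \delta^+(e^-)} Q_{ef}\, x_e x_f = \sum_{e \in A} b_e\, x_e \Bigl( \sum_{f \in \delta^+(e^-)} x_f \Bigr),
\end{equation*}
where the last equality uses $Q_{ef} = b_e$ on the successor set. The crucial observation is that, for every cycle cover $x \in X$, the inner sum counts the arcs leaving the node $e^-$, and the out-degree constraint in \eqref{X} forces $\sum_{f \in \delta^+(e^-)} x_f = 1$. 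Hence $x^TQx = \sum_{e \in A} b_e x_e = b^Tx$ for all $x \in X$, so $Q$ is linearized by $p = b$.

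For the column CVP the argument is dual: I take $p = c$, group the expansion of $x^TQx$ by the column index $e$, factor out $c_e x_e$, and note that the remaining inner sum $\sum_{f \in \delta^-(e^+)} x_f$ now counts the arcs entering the node $e^+$, which equals $1$ by the in-degree constraint in \eqref{X}. This yields $x^TQx = c^Tx$ on $X$. I do not expect a genuine obstacle here; the only point requiring care is the bookkeeping of \emph{which} endpoint's degree constraint is invoked — the ending node $e^-$ in the row case and the starting node $e^+$ in the column case — since the nonzero pattern of $Q$ is tied to the successor/predecessor relation precisely through those endpoints.
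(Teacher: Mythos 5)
Your proof is correct and follows essentially the same route as the paper: set $p=b$ (resp.\ $p=c$), expand $x^TQx$ over successor pairs, and collapse the inner sum to $1$ via the out-degree constraint at $e^-$ (resp.\ the in-degree constraint at $e^+$). The paper likewise proves only the row case and notes the column case is symmetric, so there is nothing to add.
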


\begin{proof}
We prove the case when $Q$ satisfies the row CVP.
Assume that  $b \in \mathbb{R}^m$ is such that  for all arcs $e \in A$,  $Q_{ef} = Q_{eg} = b_e$ for all $f, g \in \delta^+(e^-)$ and $Q_{ef} = 0$ otherwise.
 Since $Q_{ef} = 0$ when $e$ and $f$ are not successors, we know that $x^TQx = \sum_{e \in A}\sum_{f \in \delta^+(e^-)}Q_{ef}x_ex_f$. We have
\begin{align*}
\sum_{e \in A}\sum_{f \in \delta^+(e^-)}Q_{ef}x_ex_f = \sum_{e \in A}x_eb_e\sum_{f \in \delta^+(e^-)}x_f = \sum_{e \in A}x_eb_e = x^Tb.
\end{align*}
The proof for the column CVP is similar.
\end{proof}

\noindent A matrix $Q \in \mathbb{R}^{m \times m}$ is called a sum matrix if there exist $b, c \in \mathbb{R}^m$ such that $Q_{ef} = b_e + c_f$ for all $e, f$. A weak sum matrix is a matrix for which this property holds except for the entries on the diagonal, i.e. $Q_{ef} = b_e + c_f$ for all $e \neq f$. The weak sum property is used as a condition for linearizability for several quadratic problems, see e.g., \cite{HuSotirov1, Punnen}. Since in this work we only incur a cost when two arcs are successive, we use a different form of the weak sum condition in which we only put a restriction on successive arcs. We call this condition the incident weak sum property.

\begin{definition} A matrix $Q$ is called incident weak sum if there exist vectors $b, c \in \mathbb{R}^m$ such that $Q_{ef} = b_e + c_f$ for all $e \in A$, $f \in \delta^+(e^-)$ and $Q_{ef} = 0$ otherwise, i.e. $Q_{ef} = b_e + c_f$ for all pairs of arcs $e,f$ such that $f$ is a successor of $e$. If such vectors $b$ and $c$ exist, these vectors are called supporting vectors of $Q$. \label{DefIncidentWeakSum}
\end{definition}
If the quadratic cost matrix $Q$ is an incident weak sum matrix, then $Q$ is linearizable as stated in the following proposition.
\begin{proposition} \label{weaksum}
Let $Q$ be an incident weak sum matrix with supporting vectors $b, c \in \mathbb{R}^m$. Then $Q$ is linearizable with cost vector $p = b + c$.
\end{proposition}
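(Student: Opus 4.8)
The plan is to verify the defining identity $x^TQx = p^Tx$ directly for every cycle cover $x \in X$, mirroring the computation already used in the proof of the row/column CVP proposition. First I would use that $Q_{ef} = 0$ whenever $f$ is not a successor of $e$ to restrict the quadratic form to successor pairs, writing $x^TQx = \sum_{e \in A}\sum_{f \in \delta^+(e^-)} Q_{ef} x_e x_f$. Substituting the incident weak sum form $Q_{ef} = b_e + c_f$ and splitting the sum into its $b$-part and its $c$-part then reduces the problem to evaluating two separate double sums.

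For the $b$-part, I would factor $b_e x_e$ out of the inner sum to obtain $\sum_{e \in A} b_e x_e \sum_{f \in \delta^+(e^-)} x_f$. Because $x$ is a cycle cover, exactly one arc leaves each node, so the defining constraint $\sum_{f \in \delta^+(i)} x_f = 1$ applied at $i = e^-$ makes the inner sum equal to $1$; the $b$-part therefore collapses to $b^Tx$. The $c$-part requires the symmetric argument: after recognising that the set of successor pairs $(e,f)$ can equally be indexed by first fixing $f$ and letting $e$ range over the arcs ending at $f^+$ (that is, $e \in \delta^-(f^+)$), I would factor out $c_f x_f$ and use the in-degree constraint $\sum_{e \in \delta^-(i)} x_e = 1$ at $i = f^+$ to collapse the inner sum to $1$, giving $c^Tx$. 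Adding the two parts yields $x^TQx = b^Tx + c^Tx = (b+c)^Tx = p^Tx$, which is exactly the linearizability of $Q$ with vector $p = b + c$.

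The one step that needs care rather than mere mechanics is the reindexing of the $c$-part: the double sum is naturally organised by the outgoing arc $e$, but to invoke the in-degree constraint I must re-express it as a sum over $f$ with $e$ ranging over $\delta^-(f^+)$. This is where I would double-check that the successor relation $f \in \delta^+(e^-) \Leftrightarrow e \in \delta^-(f^+)$ is being used consistently with the convention that $e^+$ and $e^-$ denote the start and end nodes of $e$; everything else is a direct application of the two flow constraints defining $X$.
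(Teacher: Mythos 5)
Your proposal is correct and follows essentially the same route as the paper's proof: restrict the quadratic form to successor pairs, substitute $Q_{ef}=b_e+c_f$, split into the $b$- and $c$-parts, reindex the $c$-part via $f\in\delta^+(e^-)\Leftrightarrow e\in\delta^-(f^+)$, and collapse each inner sum to $1$ using the out-degree and in-degree constraints defining $X$. The reindexing step you flag as needing care is exactly the step the paper performs, and your use of the convention $e^+$ for start node and $e^-$ for end node is consistent with it.
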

\begin{proof}
We show that for all $x \in X$ we have $x^TQx = p^Tx$ where $p = b + c$. Note that since $Q_{ef} = 0$ for all arcs that are not successors, we have $x^TQx = \sum_{e \in A} \sum_{f \in \delta^+(e^-)}Q_{ef}x_ex_f$. Now,
\begin{align*}
\sum_{e \in A}\sum_{f \in \delta^+(e^-)}Q_{ef}x_ex_f  & = \sum_{e \in A}\sum_{f \in \delta^+(e^-)}(b_e + c_f)x_ex_f\\
& = \sum_{e \in A}b_ex_e \sum_{f \in \delta^+(e^-)}x_f + \sum_{f \in A}c_fx_f\sum_{e \in \delta^-(f^+)}x_e \\
& = \sum_{e \in A}b_ex_e  + \sum_{f \in A}c_fx_f  = \sum_{e \in A}p_ex_e.
\end{align*}
Here we use the fact that $\sum_{f \in \delta^+(e^-)}x_f = \sum_{e \in \delta^-(f^+)}x_e = 1$, since $x$ is a cycle cover.
\end{proof}
From Proposition \ref{weaksum} it follows that the incident weak sum property is a sufficient condition for $Q$ to be linearizable. By including linear arc costs, this result remains valid, since we only increase the entries on the diagonal of $Q$. \\
Moreover, note that when $Q$ satisfies the row or column CVP, then $Q$ is an incident weak sum matrix.
Next, we provide a special type of instance for which the cost matrix is not by definition linearizable, but for which we can still obtain its optimal value by solving a linear cycle cover problem.

\begin{definition} A matrix $Q \in \mathbb{R}^{m \times m}$ is called a symmetric product matrix if $Q = aa^T$ for some vector $a \in \mathbb{R}^m$.
\end{definition}
Equivalently, we can say that $Q$ is a symmetric product matrix if it is a symmetric positive semidefinite matrix of rank one. Instances with such a quadratic cost matrix can be solved in polynomial time, as stated in the following proposition.
\begin{proposition}
If the quadratic cost matrix $Q$ is a symmetric product matrix, i.e. $Q = aa^T$ for some $a \in \mathbb{R}^m$, then the optimal cycle cover can be computed in polynomial time.
\end{proposition}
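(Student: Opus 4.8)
The plan is to exploit the rank-one structure to collapse the quadratic objective into a single scalar linear form. Since $Q = aa^T$, for every cover $x \in X$ we have
\[
x^TQx = x^Taa^Tx = (a^Tx)^2,
\]
so solving the \textsc{QCCP} amounts to minimizing the convex function $s \mapsto s^2$ of the scalar $s = a^Tx$ over all cycle covers. First I would solve two linear cycle cover problems with arc-cost vector $a$, namely $\ell := \min_{x \in X} a^Tx$ and $u := \max_{x \in X} a^Tx$, with optimal covers $x_\ell$ and $x_u$; both are computable in polynomial time because the constraint set $X$ in \eqref{X} is totally unimodular.

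If $0 \notin (\ell,u)$ the problem is settled immediately. When $\ell \geq 0$ we have $a^Tx \geq \ell \geq 0$ for every cover, so $(a^Tx)^2$ is nondecreasing in $a^Tx$ and $x_\ell$ is optimal with value $\ell^2$; symmetrically, if $u \leq 0$ then $x_u$ is optimal with value $u^2$. In either case the optimal cover is obtained from a single linear \textsc{CCP}, which already handles, for instance, every instance with $a \geq 0$.

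The genuinely delicate case is $\ell < 0 < u$. Here the minimum of $(a^Tx)^2$ over $\mathrm{conv}(X)$ equals $0$, but this value need not be attained at an integral cover, so the optimum is the cover whose value $a^Tx$ is closest to $0$. Viewing $X$ as the set of perfect matchings of the bipartite graph obtained by splitting each node into an out-copy and an in-copy (with edge weights $a_e$), this asks for a perfect matching of minimum absolute weight, and I expect this to be the main obstacle: in contrast to the minimum- and maximum-weight matchings, locating the matching whose weight is closest to an interior target is subtle and, in full generality, as hard as the exact perfect matching problem. My plan is therefore to reduce it to the two one-sided quantities $s^+ := \min\{a^Tx : x \in X,\ a^Tx \geq 0\}$ and $s^- := \max\{a^Tx : x \in X,\ a^Tx \leq 0\}$ and return $\min\{(s^+)^2,(s^-)^2\}$, the correctness of this choice being immediate from the convexity of $s \mapsto s^2$. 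The crux of the proof is then to argue that, owing to the flow/matching structure of $X$, each of these values (a linear objective under one extra side constraint) can be extracted in polynomial time — e.g. by a Lagrangian/parametric treatment of the side constraint whose optimal basis changes only polynomially often — or, failing a fully general argument, to isolate the structural assumption on $a$ under which the sign-definite cases above already exhaust all instances.
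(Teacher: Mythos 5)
Your opening identity $x^TQx = x^Taa^Tx = (a^Tx)^2$ is exactly the paper's starting point, but from there the paper takes a one-line shortcut: it simply asserts that minimizing $(a^Tx)^2$ over $X$ is \emph{equivalent} to minimizing the linear function $a^Tx$ over $X$, i.e., to a single polynomial-time \textsc{CCP}. That equivalence is precisely your sign-definite case $\ell \geq 0$; the paper never addresses the possibility that $a^Tx$ changes sign over $X$ (for $a^Tx\le 0$ on $X$ one would have to \emph{maximize} $a^Tx$, and in the mixed-sign case neither linear problem gives the right answer). So your diagnosis of where the difficulty lies is sound, and in that respect your analysis is more careful than the published argument, whose stated conclusion for arbitrary $a\in\mathbb{R}^m$ rests on exactly the step you refuse to take for granted.

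The gap is that your proposal stops at the point where the proof would actually have to be carried out. In the case $\ell<0<u$ you reduce the problem to computing $s^+=\min\{a^Tx : x\in X,\ a^Tx\ge 0\}$ and $s^-=\max\{a^Tx : x\in X,\ a^Tx\le 0\}$ and then only gesture at a Lagrangian or parametric treatment. Neither of these one-sided problems is known to be polynomial-time solvable: as you yourself note, finding the cover (equivalently, the bipartite perfect matching) whose weight is closest to an interior target is essentially the exact perfect matching problem, which for general weights is not known to be in $\mathcal{P}$; and Lagrangian relaxation of the side constraint $a^Tx\ge 0$ only recovers points on the lower convex envelope of the value function, which in general misses the constrained integral optimum. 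Your fallback --- restricting to instances where $a^Tx$ has constant sign on $X$ (e.g., $a\ge 0$, which is the regime the paper implicitly works in) --- would yield a correct but weaker statement. As written, the argument does not establish the proposition for arbitrary $a\in\mathbb{R}^m$; to close it you must either supply a polynomial-time algorithm for the mixed-sign case or add a hypothesis on $a$ that excludes it.
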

\begin{proof}
Let $Q$ be such that $Q = aa^T$ for some $a \in \mathbb{R}^m$. Then,
\begin{align*}
x^TQx = x^Taa^Tx = (a^Tx)^T(a^Tx) = (a^Tx)^2.
\end{align*}
Minimizing $x^TQx$ over all $x \in X$ is then equivalent to minimizing $a^Tx$ over all cycle covers $x \in X$.
\end{proof}

\noindent Until now we considered instances for which $Q$ is of the desired type, i.e., $Q_{ef} = 0$ when $f$ is not a successor of $e$. Below we derive two sufficient conditions for linearizability of a matrix $Q$ which can have nonzero interaction cost between non-consecutive arcs. Although these cost matrices do not meet the assumptions of the \textsc{QCCP}, we can still use them to derive strong bounds for the objective value of the original problem. This is addressed in Section \ref{LinearizationBasedBounds}. \\ \\
Punnen et al.\ \cite{Punnen} introduce  a generalized version of the weak sum property for the \textsc{QTSP}. Their approach can be applied to the \textsc{QCCP}. However, since Punnen et al.\ \cite{Punnen} prove the condition to hold for complete graphs, we provide a proof for general digraphs.

First, we define some new terminology. Instead of writing $Q_{ef}$ for $e, f \in A$ we can also write $Q_{ij,kl}$ with $(i,j), (k,l) \in A$. Let $N^+_i$ denote the set of nodes $j$ for which there exists an arc $(i,j) \in A$, i.e., $N^+_i := \{ j \in N \, | \, \, (i,j) \in A \}$. Similarly, let $N^-_i$ be the set of nodes $j$ for which an arc $(j,i) \in A$ exists, i.e., $N^-_i := \{ j \in N \, | \, \, (j,i) \in A\}$. Now we can introduce the notion of a generalized weak sum matrix and prove that it is linearizable.

\begin{definition} \label{generalizedweaksumdef} $Q$ is called a generalized weak sum matrix if there exist $B, C \in \mathbb{R}^{m \times n}$ and $D, T  \in \mathbb{R}^{n \times m}$ such that $Q_{ij,kl} = b_{ij,k} + c_{ij,l} + d_{i,kl} + t_{j,kl}$ for all $i,j,k,l$ with $(i,j), (k,l) \in A$. If such $B, C, D$ and $T$ exist, these matrices are called supporting matrices of $Q$.
\end{definition}
Now we can prove the following proposition.
\begin{proposition} \label{generalizedweaksum}
If $Q$ is a generalized weak sum matrix with supporting matrices $B, C \in \mathbb{R}^{m \times n}$ and $D, T  \in \mathbb{R}^{n \times m}$, then $Q$ is linearizable with cost vector $p$ given by $p_{ij} = \sum_{k = 1}^nb_{ij,k} +  \sum_{k = 1}^nc_{ij,k} + \sum_{k = 1}^nd_{k,ij} + \sum_{k = 1}^nt_{k,ij}$.
\end{proposition}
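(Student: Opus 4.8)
The plan is to mimic the strategy of Proposition \ref{weaksum}, but now handling four families of supporting terms instead of two. The key structural fact is again that $Q_{ij,kl}$ contributes to $x^TQx$ only through products $x_{ij}x_{kl}$, so I would start by writing
\begin{align*}
x^TQx = \sum_{(i,j)\in A}\sum_{(k,l)\in A}\left(b_{ij,k} + c_{ij,l} + d_{i,kl} + t_{j,kl}\right)x_{ij}x_{kl},
\end{align*}
and then split this into the four separate double sums, one per term. The goal is to show that each of these four sums collapses, using the cycle-cover degree constraints, to a single linear expression in $x$, and that their total matches the claimed $p$.

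The main work is a bookkeeping computation on each piece. Take the first sum $\sum_{(i,j)}\sum_{(k,l)} b_{ij,k}\,x_{ij}x_{kl}$. Here $b_{ij,k}$ depends on the arc $(i,j)$ and on the \emph{tail} node $k$ of the second arc, so I would factor out $x_{ij}b_{ij,k}$ and collect the inner sum as $\sum_{l \in N^+_k} x_{kl}$, which is exactly the out-degree of node $k$ in the cycle cover and hence equals $1$ for every $k \in N$ by the constraint in \eqref{X}. This leaves $\sum_{(i,j)\in A} x_{ij}\sum_{k=1}^n b_{ij,k}$, matching the first block of $p_{ij}$. The analogous reductions are: for the $c_{ij,l}$ term I sum over the \emph{head} node $l$ and use $\sum_{k\in N^-_l} x_{kl} = 1$ (in-degree one); for the $d_{i,kl}$ term, $d$ depends on the tail $i$ of the first arc, so I sum $x_{ij}$ over $j \in N^+_i$ to get out-degree one at $i$; and for the $t_{j,kl}$ term I sum $x_{ij}$ over $i \in N^-_j$ to get in-degree one at $j$. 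Each reduction yields exactly one of the four blocks $\sum_k b_{ij,k}$, $\sum_k c_{ij,k}$, $\sum_k d_{k,ij}$, $\sum_k t_{k,ij}$ in the definition of $p_{ij}$.

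The one point that needs care — and where I expect the only real subtlety — is the indexing in the latter two terms: in $d_{i,kl}$ and $t_{j,kl}$ the free arc being summed against is $(k,l)$, not $(i,j)$, so when I rename summation variables to express the final coefficient of a generic $x_{ij}$ I must be careful which node of the surviving arc the supporting entry is indexed by. Concretely, the coefficient of $x_{ij}$ coming from the $d$-term is $\sum_{k=1}^n d_{k,ij}$ (summing the first index of $D$ over all nodes, with $ij$ fixed as the column arc), and similarly $\sum_{k=1}^n t_{k,ij}$ for $T$; this is exactly how $p_{ij}$ is written in the statement, so the notation is consistent once the roles of the two arcs are tracked correctly. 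Assembling the four collapsed sums gives $x^TQx = \sum_{(i,j)\in A} p_{ij}\,x_{ij} = p^Tx$ for every $x \in X$, which is the desired linearization.
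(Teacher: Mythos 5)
Your proposal is correct and follows essentially the same route as the paper's proof: expand $x^TQx$ over all pairs of arcs, split into the four sums for $b$, $c$, $d$, $t$, and collapse each via the appropriate out-degree or in-degree constraint ($\sum_{l \in N^+_k}x_{kl} = \sum_{k \in N^-_l}x_{kl} = \sum_{j \in N^+_i}x_{ij} = \sum_{i \in N^-_j}x_{ij} = 1$). The indexing subtlety you flag for the $d$- and $t$-terms is handled exactly as in the paper, so nothing is missing.
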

\begin{proof}
Let $\bar{b}_{ij} := \sum_{k = 1}^nb_{ij,k}$, $\bar{c}_{ij} := \sum_{k = 1}^nc_{ij,k}$, $\bar{d}_{ij} := \sum_{k = 1}^nd_{k,ij}$, $\bar{t}_{ij} := \sum_{k = 1}^nt_{k,ij}$ and $p_{ij} = \bar{b}_{ij} + \bar{c}_{ij} + \bar{d}_{ij} + \bar{t}_{ij}$ for all $(i,j) \in A$. Then, for $x \in X$,
\begin{align*}
x^TQx & = \sum_{i \in N}\sum_{j \in N^+_i}\sum_{k \in N}\sum_{l \in N^+_k}Q_{ij,kl}x_{ij}x_{kl} \\
& = \sum_{i \in N}\sum_{j \in N^+_i}\sum_{k \in N}\sum_{l \in N^+_k}b_{ij,k}x_{ij}x_{kl} + \sum_{i \in N}\sum_{j \in N^+_i}\sum_{l \in N}\sum_{k \in N^-_l}c_{ij,l}x_{ij}x_{kl} \\
& \quad + \sum_{k \in N}\sum_{l \in N^+_k}\sum_{i \in N}\sum_{j \in N^+_i}d_{i,kl}x_{ij}x_{kl} + \sum_{k \in N}\sum_{l \in N^+_k}\sum_{j \in N}\sum_{i \in N^-_j}t_{j,kl}x_{ij}x_{kl}\\
& = \sum_{i \in N}\sum_{j \in N^+_i}x_{ij} \sum_{k \in N}b_{ij,k} \sum_{l \in N^+_k}x_{kl} +  \sum_{i \in N}\sum_{j \in N^+_i} x_{ij}\sum_{l \in N}c_{ij,l}\sum_{k \in N^-_l}x_{kl} \\
& \quad + \sum_{k \in N}\sum_{l \in N^+_k}x_{kl}\sum_{i \in N}d_{i,kl}\sum_{j \in N^+_i}x_{ij} + \sum_{k \in N}\sum_{l \in N^+_k}x_{kl}\sum_{j \in N}t_{j,kl}\sum_{i \in N^-_j}x_{ij}\\
& = \sum_{i \in N}\sum_{j \in N^+_i}(\bar{b}_{ij} + \bar{c}_{ij} + \bar{d}_{ij} + \bar{t}_{ij})x_{ij} = \sum_{i \in N}\sum_{j \in N^+_i}p_{ij}x_{ij} \, ,
\end{align*}
\noindent where we use the fact that $\sum_{l \in N^+_k}x_{kl} = \sum_{k \in N^-_l}x_{kl} = \sum_{j \in N^+_i}x_{ij} = \sum_{i \in N^-_j}x_{ij} = 1$ since $x$ is a cycle cover.
\end{proof}
Note that an incident weak sum matrix can be seen as a special case of a generalized weak sum matrix. That is, for all $(i,j) \in A$ we set $b_{ij,j} = b_{ij}$ and $b_{ij,k} = 0$ for all $k \neq j$ and  for all $(k,l) \in A$ we set $t_{k,kl} = t_{kl}$ and $t_{j,kl} = 0$ for all $j \neq k$. Moreover, let $C$ and $D$ be the zero matrix. Then we have $Q_{ij,jl} = b_{ij,j} + c_{ij,l} + d_{i,jl} + t_{j,jl} = b_{ij} + t_{jl}$ for all $(i,j), (j,l) \in A$ and $Q_{ij,kl} = 0$ otherwise.

When $Q$ is a generalized weak sum matrix, we need $4mn$ parameters to describe $Q$. This number can be reduced by considering a more restricted version of  a generalized weak sum matrix.
\begin{definition} $Q$ is called a restricted generalized weak sum matrix if there exist $C \in \mathbb{R}^{m \times n}$, $D  \in \mathbb{R}^{n \times m}$ and $b, t \in \mathbb{R}^m$ such that $Q_{ij,jl} = b_{ij} + c_{ij,l} + d_{i,jl} + t_{jl}$ for all $i,j,l$ with $(i,j), (j,l) \in A$ and $Q_{ij,kl} = c_{ij,l} + d_{i,kl}$ otherwise. If such $C, D$ and $b, t$ exist, these are called supporting matrices and vectors, respectively.
\end{definition}
We can show that restricted generalized weak sum matrices are linearizable.
\begin{proposition} \label{generalizedweaksumRestrict}
If $Q$ is a restricted generalized weak sum matrix with supporting matrices $C, D$ and supporting vectors $b,t$, then $Q$ is linearizable with vector $p$ given by $p_{ij} = b_{ij} +  \sum_{k=1}^nc_{ij,k} + \sum_{k=1}^nd_{k,ij} + t_{ij}$.
\end{proposition}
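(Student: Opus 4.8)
The plan is to exhibit every restricted generalized weak sum matrix as a special case of a generalized weak sum matrix (Definition \ref{generalizedweaksumdef}) and then invoke Proposition \ref{generalizedweaksum}, mirroring the remark following that proposition in which an incident weak sum matrix is placed inside the generalized family. Given the supporting data $C \in \mathbb{R}^{m \times n}$, $D \in \mathbb{R}^{n \times m}$ and $b, t \in \mathbb{R}^m$ of $Q$, I would keep $\tilde C := C$ and $\tilde D := D$ and define the two remaining supporting matrices $\tilde B \in \mathbb{R}^{m \times n}$ and $\tilde T \in \mathbb{R}^{n \times m}$ by
\begin{align*}
\tilde b_{ij,k} := \begin{cases} b_{ij} & \text{if } k = j, \\ 0 & \text{otherwise,} \end{cases} \qquad \tilde t_{j,kl} := \begin{cases} t_{kl} & \text{if } j = k, \\ 0 & \text{otherwise.} \end{cases}
\end{align*}
In words, $b_{ij}$ is stored in the column indexed by the head node $j$ of the arc $(i,j)$, and $t_{kl}$ in the row indexed by the tail node $k$ of the arc $(k,l)$. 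This is precisely the device that switches the otherwise-dense $b$- and $t$-blocks on only for successive arc pairs.

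I would then verify that $\tilde B, \tilde C, \tilde D, \tilde T$ reproduce $Q$ entrywise. For a successive pair $(i,j),(j,l) \in A$ (so the second arc has tail $k=j$), the generalized template evaluates to $\tilde b_{ij,j} + c_{ij,l} + d_{i,jl} + \tilde t_{j,jl} = b_{ij} + c_{ij,l} + d_{i,jl} + t_{jl}$, matching the successive entry of $Q$. For a non-successive pair ($k \neq j$) both indicator terms vanish and the template reduces to $c_{ij,l} + d_{i,kl}$, matching the ``otherwise'' entry. Hence $Q$ is a genuine generalized weak sum matrix and Proposition \ref{generalizedweaksum} gives linearizability at once.

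Finally I would check that the cost vector delivered by Proposition \ref{generalizedweaksum}, namely $p_{ij} = \sum_{k} \tilde b_{ij,k} + \sum_k \tilde c_{ij,k} + \sum_k \tilde d_{k,ij} + \sum_k \tilde t_{k,ij}$, reduces to the claimed expression. The first sum collapses to its single surviving term $\tilde b_{ij,j} = b_{ij}$; the two middle sums are unchanged since $\tilde C = C$ and $\tilde D = D$; and the last sum collapses to $\tilde t_{i,ij} = t_{ij}$, the only nonzero entry being the one at the tail node $i$ of $(i,j)$. This yields $p_{ij} = b_{ij} + \sum_{k=1}^n c_{ij,k} + \sum_{k=1}^n d_{k,ij} + t_{ij}$, as asserted. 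I expect the only delicate point to be the index bookkeeping in these two collapsing sums: $\sum_k \tilde b_{ij,k}$ runs over candidate head nodes of the first arc while $\sum_k \tilde t_{k,ij}$ runs over candidate tail nodes of the second arc, so the indicators select different surviving terms ($b_{ij}$ versus $t_{ij}$). As a cross-check I would re-derive the same $p$ directly, writing $x^TQx$ as the uniform $c_{ij,l} + d_{i,kl}$ sum over all arc pairs plus the $b_{ij} + t_{jl}$ sum over successive pairs, and collapsing each inner sum to $1$ via $\sum_{j \in N^+_i} x_{ij} = \sum_{i \in N^-_j} x_{ij} = 1$.
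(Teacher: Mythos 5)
Your proposal is correct and follows essentially the same route as the paper: it embeds the restricted generalized weak sum matrix into the generalized family by defining $B_{ij,k}=b_{ij}\,[k=j]$ and $T_{k,ij}=t_{ij}\,[k=i]$, invokes Proposition \ref{generalizedweaksum}, and collapses the two indicator sums to $b_{ij}$ and $t_{ij}$. The only difference is cosmetic: you verify the entrywise agreement with $Q$ explicitly, which the paper leaves implicit (and your final formula even corrects a small typo in the paper's stated $p$, which repeats $\sum_k b_{ij,k}$ where $\sum_k c_{ij,k}$ is meant).
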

\begin{proof}
Define $B \in \mathbb{R}^{m \times n}$ and $T \in \mathbb{R}^{n \times m}$ as follows:
\begin{align*}
B_{ij,k} & = \begin{cases} b_{ij} \quad & \text{ if $k = j$} \\ 0 & \text{ otherwise} \end{cases} \qquad \qquad \qquad \forall (i,j) \in A, \,  \forall k \in N \\
T_{k,ij} & = \begin{cases} t_{ij} \quad & \text{ if $k = i$} \\ 0 & \text{ otherwise} \end{cases} \qquad \qquad \qquad \forall (i,j) \in A, \, \forall k \in N
\end{align*}
Now the matrices $B, C, D$ and $T$ are such that they satisfy the conditions of Proposition \ref{generalizedweaksum}. This implies that $Q$ is linearizable with vector $p'$ given by $p'_{ij} = \sum_{k = 1}^nb_{ij,k} +  \sum_{k = 1}^nc_{ij,k} + \sum_{k = 1}^nd_{k,ij} + \sum_{k = 1}^nt_{k,ij}$. Since $\sum_{k = 1}^nb_{ij,k} = b_{ij}$ and $\sum_{k = 1}^nt_{k,ij} = t_{ij}$ it follows that $Q$ is linearizable with vector $p := b_{ij} +  \sum_{k = 1}^nb_{ij,k} + \sum_{k = 1}^nd_{k,ij} + t_{ij}$.
\end{proof}

\section{Linearization Based Bounds for \textsc{QCCP}} \label{LinearizationBasedBounds}
In this section we show how the sufficient conditions for linearizability can be used to derive bounds for the optimal value of the \textsc{QCCP}. The construction of these bounds is provided in Section \ref{LBBConstruction}. Section \ref{Prelim1} shows some preliminary numerical results of these bounding procedures.

\subsection{Construction of Linearization Based Bounds} \label{LBBConstruction}
When an instance of the \textsc{QCCP} is linearizable, we can solve the problem in polynomial time by solving the corresponding linear cycle cover problem. When a quadratic cost matrix $Q$ is not linearizable, we can still use the sufficient conditions for linearizability to find lower bounds for the optimal value of the problem. This approach is introduced by Hu and Sotirov \cite{HuSotirov2} for general binary quadratic problems. We here use tailor made sufficient conditions for the \textsc{QCCP}, which leads to efficient lower bounds as we show later in the numerical results.

Before we proceed, let us recall the linear cycle cover problem. We introduce the matrix $U \in \mathbb{R}^{n \times m}$ with $U_{ie} = 1$ if node $i$ is the starting node of arc $e$ and 0 otherwise. Similarly, we define $V \in \mathbb{R}^{n \times m}$ with $V_{ie} = 1$ if node $i$ is the ending node of arc $e$ and 0 otherwise. Since the matrix $[U^T, V^T]^T$ is totally unimodular, the optimal value of the \textsc{CCP} using cost vector $p$ equals
\begin{align}
OPT(p)  & := \min_{x \in \mathbb{R}^m} \{p^Tx \, | \, \, \begin{bmatrix}
U \\ V
\end{bmatrix} x = \mathbbm{1}_{2n} \, , \, \,  x \geq 0 \} \label{CCPprimal} \\
& \,\, = \max_{y \in \mathbb{R}^{2n}}\{\mathbbm{1}_{2n}^T y \, | \, \, [U^T, V^T]y \leq p \}, \label{CCPdual}
\end{align}
where $\mathbbm{1}_{2n} \in \mathbb{R}^{2n}$ equals the vector of ones. Note that (\ref{linearCCP}) and (\ref{CCPprimal}) are equivalent optimization problems. The corresponding dual problem is given in (\ref{CCPdual}).

When $Q$ is linearizable with linearization vector $p$, we can find the optimal value for the \textsc{QCCP} by computing $OPT(p)$ using (\ref{CCPprimal}) or (\ref{CCPdual}). If $Q$ is not linearizable, we can search for a linearizable matrix $\hat{Q}$ that is as close as possible to $Q$. To guarantee that $\hat{Q}$ is indeed linearizable, it should satisfy one of the sufficient conditions for linearizability derived in Section \ref{SufficientConditions}. We define the sets $S_i(Q)$, for $i = 1, 2, 3$, consisting of cost matrices $\hat{Q}$ such that $\hat{Q}$ is linearizable w.r.t.\ a sufficient condition for linearizability and $Q - \hat{Q}$ is elementwise nonnegative. We have
\begin{align*}
S_1(Q) & := \{\hat{Q} \in \mathbb{R}^{m \times m} \, | \, \, \hat{Q} \text{ {is an incident weak sum matrix} and } Q - \hat{Q} \geq 0 \}, \\
S_2(Q) & := \{\hat{Q} \in \mathbb{R}^{m \times m} \, | \, \, \hat{Q} \text{ {is a restricted generalized weak sum matrix} and } Q - \hat{Q} \geq 0 \}, \\
S_3(Q) & := \{\hat{Q} \in \mathbb{R}^{m \times m} \, | \, \, \hat{Q} \text{ {is a generalized weak sum matrix} and } Q - \hat{Q} \geq 0 \}.
\end{align*}
\begin{remark} We do not consider the sets of cost matrices $Q$ satisfying the row or column CVP, since these are special types of incident weak sum matrices. These type of matrices are contained in $S_1$.
\end{remark}

$S_i(Q)$ can be seen as the set of all the linearizable cost matrices of a specific type that are suitable for obtaining lower bounds for the optimal value of the problem. For this purpose, we define for $i = 1, 2, 3$ the set $\tau_i(Q)$ of cost vectors $\hat{p} \in \mathbb{R}^m$ as
\begin{align*}
\tau_i(Q) := \{\hat{p} \in \mathbb{R}^m \, | \, \, x^T\hat{p} = x^T\hat{Q}x \text{ for all } x \in X \, , \, \, \hat{Q} \in S_i(Q)\}.
\end{align*}
It is clear that for all $i$ and all $\hat{p} \in \tau_i(Q)$ we have
\begin{align*}
OPT(Q) = \min_{x \in X}\{x^TQx\} \geq \min_{x \in X}\{x^T\hat{Q}x\} = \min_{x \in X}\{x^T\hat{p}\} = OPT(\hat{p}).
\end{align*}
So, indeed, $OPT(\hat{p})$ is a lower bound for the optimal objective value of the \textsc{QCCP} for all $i$ and $\hat{p} \in \tau_i(Q)$. By maximizing over all cost vectors in $\tau_i(Q)$, we obtain the strongest linearization based bound with respect to the set $S_i(Q)$, which we denote by $v_{LBB}^i$,  see also \cite{HuSotirov2}:
\begin{align} \label{LBB}
v^i_{LBB} := \max_{\hat{p} \in \tau_i(Q)}\{OPT(\hat{p})\} = \max_{\substack{y \in \mathbb{R}^{2n}\\
\hat{p} \in \mathbb{R}^m}}\{\mathbbm{1}_{2n}^T y \, | \, \, [U^T, V^T]y \leq \hat{p} \, , \, \, \hat{p} \in \tau_i(Q) \}.
\end{align}
\noindent The corresponding bounding approaches are denoted by $LBB1$, $LBB2$ and $LBB3$, respectively.

\begin{remark} Recall that the matrices in $S_2(Q)$ and $S_3(Q)$ can have nonzero interaction cost for non-consecutive arcs, so they do not satisfy the assumptions on the quadratic cost matrix of the \textsc{QCCP}. Nevertheless, they can still be used to derive lower bounds for the original problem. In other words, we search for the general quadratic cost matrix that is linearizable and as close as possible to $Q$ that gives us the best lower bound.
\end{remark}

As long as the set $\tau_i(Q)$ is a polyhedron, the corresponding bound $v^i_{LBB}$ can be calculated by solving the linear programming problem (\ref{LBB}). The sets $\tau_i(Q)$ for $i = 1, 2, 3$ are indeed nonempty polyhedra, since they can be described by a finite number of linear equalities and inequalities. These polyhedral descriptions are provided in Table \ref{polyhedralDescriptions}.

\begin{table}[H]
\centering
\scriptsize
\begin{tabular}{@{}ccll@{}}
\toprule
\textbf{Set} & \textbf{\begin{tabular}[c]{@{}c@{}}Type of \\ cost matrix\end{tabular}}                                                      & \multicolumn{2}{c}{\textbf{(In)equalities which describe the set}}                                                                                                                                                                                                                                                                                                                                                                                                   \\ \midrule
$\tau_1(Q)$  & \begin{tabular}[c]{@{}c@{}}Incident \\ weak sum \\ matrix\end{tabular}              & \begin{tabular}[c]{@{}l@{}}$b_e + c_f \leq Q_{ef}$\\ $\hat{p}_e = b_e + c_e$\\ $b, c \in \mathbb{R}^m$\end{tabular}                                                                                                                                                                                            & \begin{tabular}[c]{@{}l@{}}$\forall e \in A, f \in \delta^+(e^-)$\\ $\forall e \in A$\\ $\quad$\end{tabular}                                      \\ \midrule
$\tau_2(Q)$  & \begin{tabular}[c]{@{}c@{}}Restricted \\ generalized \\ weak sum \\ matrix\end{tabular} & \begin{tabular}[c]{@{}l@{}}$b_{ij} + c_{ij,l} + d_{i, jl} + t_{jl} \leq Q_{ij,jl}$\\ $c_{ij,l} + d_{i,kl} \leq  Q_{ij,kl}$\\ $\hat{p}_{ij} = b_{ij} +  \sum_{k=1}^nc_{ij,k} + \sum_{k=1}^nd_{k,ij} + t_{ij}$\\ $b, t \in \mathbb{R}^m, C \in \mathbb{R}^{m \times n}, D \in \mathbb{R}^{n \times m}$\end{tabular} & \begin{tabular}[c]{@{}l@{}}$\forall (i,j), (j,l) \in A$\\ $\forall (i,j), (k,l) \in A, j \neq k$\\ $\forall (i,j) \in A$ \\ $\quad$\end{tabular} \\
\midrule
$\tau_3(Q)$  & \begin{tabular}[c]{@{}c@{}}Generalized \\ weak sum \\matrix\end{tabular}            & \begin{tabular}[c]{@{}l@{}}$b_{ij,k} + c_{ij,l} + d_{i,kl} + t_{j,kl} \leq Q_{ij,kl}$\\ $\hat{p}_{ij} = \sum_{k=1}^nb_{ij,k} +  \sum_{k=1}^nc_{ij,k} + \sum_{k=1}^nd_{k,ij} + \sum_{k =1}^nt_{k,ij}$ \\ $B, C \in \mathbb{R}^{m \times n}, D, T \in \mathbb{R}^{n \times m}$\end{tabular}                       & \begin{tabular}[c]{@{}l@{}}$\forall (i,j), (k,l) \in A$\\ $\forall (i,j) \in A$\\ $\quad$\end{tabular}                                            \\ \bottomrule
\end{tabular}
\caption{Polyhedral descriptions of the sets $\tau_1(Q), \tau_2(Q)$ and $\tau_3(Q)$. \label{polyhedralDescriptions}}
\end{table}
By construction, we have $\tau_1(Q) \subseteq \tau_2(Q) \subseteq \tau_3(Q)$ for all quadratic cost matrices $Q$. Consequently, we can establish the following result about the quality of the corresponding linearization based bounds.
\begin{theorem}
For all instances of the \textsc{QCCP}, we have $v^1_{LBB} \leq v^2_{LBB} \leq v^3_{LBB}$.
\end{theorem}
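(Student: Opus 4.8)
The plan is to reduce the statement to an elementary monotonicity argument for the maximization of a fixed objective over nested feasible regions. The decisive observation, recorded immediately before the theorem, is that the feasible sets are nested: $\tau_1(Q) \subseteq \tau_2(Q) \subseteq \tau_3(Q)$. Since each bound is defined by $v^i_{LBB} = \max_{\hat{p} \in \tau_i(Q)} OPT(\hat{p})$, that is, as the maximum of one and the same map $\hat{p} \mapsto OPT(\hat{p})$ over the region $\tau_i(Q)$, enlarging the feasible region cannot decrease the optimal value. Hence the inclusions immediately yield $v^1_{LBB} \leq v^2_{LBB} \leq v^3_{LBB}$, and the bulk of the work is simply to make the nesting explicit.

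First I would establish the matrix inclusions $S_1(Q) \subseteq S_2(Q) \subseteq S_3(Q)$. For $S_1(Q) \subseteq S_2(Q)$, I would observe that every incident weak sum matrix (Definition \ref{DefIncidentWeakSum}) is a restricted generalized weak sum matrix: choosing the supporting matrices $C$ and $D$ to be zero leaves $\hat{Q}_{ij,jl} = b_{ij} + t_{jl}$ on successive pairs and $\hat{Q}_{ij,kl} = 0$ otherwise, which is exactly the incident weak sum form. For $S_2(Q) \subseteq S_3(Q)$, I would reuse the construction in the proof of Proposition \ref{generalizedweaksumRestrict}, where from a restricted generalized weak sum matrix one builds supporting matrices $B, C, D, T$ that witness it as a generalized weak sum matrix (Definition \ref{generalizedweaksumdef}). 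Because the elementwise domination constraint $Q - \hat{Q} \geq 0$ appears identically in the definition of all three sets $S_i(Q)$, these type-inclusions lift directly to $S_1(Q) \subseteq S_2(Q) \subseteq S_3(Q)$.

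Next I would transfer these inclusions to the vector sets. If $\hat{p} \in \tau_1(Q)$, then by definition there exists $\hat{Q} \in S_1(Q)$ with $x^T\hat{p} = x^T\hat{Q}x$ for all $x \in X$; since $\hat{Q} \in S_1(Q) \subseteq S_2(Q)$, the very same $\hat{Q}$ certifies that $\hat{p} \in \tau_2(Q)$, and the argument for $\tau_2(Q) \subseteq \tau_3(Q)$ is identical. Finally I would invoke the monotonicity of the maximum: for any function $g$ and sets $\mathcal{A} \subseteq \mathcal{B}$ one has $\max_{z \in \mathcal{A}} g(z) \leq \max_{z \in \mathcal{B}} g(z)$, since every point feasible for the smaller problem is feasible for the larger. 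Applying this with $g = OPT$ and the nested sets $\tau_i(Q)$ gives the claimed chain of inequalities.

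I do not expect a genuine obstacle; the only point demanding care is to confirm, using the polyhedral descriptions in Table \ref{polyhedralDescriptions}, that the defining (in)equalities of the three sets are genuine relaxations of one another, so that no hidden constraint breaks the nesting. Once the set inclusions $\tau_1(Q) \subseteq \tau_2(Q) \subseteq \tau_3(Q)$ are secured, the conclusion is immediate.
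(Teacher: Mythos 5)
Your proposal is correct and follows essentially the same route as the paper, which notes the nesting $\tau_1(Q) \subseteq \tau_2(Q) \subseteq \tau_3(Q)$ just before the theorem and then declares the inequalities immediate from the constructions and the definitions of the three matrix classes. You simply spell out the details the paper leaves implicit (the type-inclusions $S_1(Q) \subseteq S_2(Q) \subseteq S_3(Q)$ via zero supporting matrices and the construction in Proposition \ref{generalizedweaksumRestrict}, the transfer to the $\tau_i(Q)$, and monotonicity of the maximum over nested sets), all of which is sound.
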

\begin{proof}
The proof follows immediately from the construction of the linearization based bounds and the definitions of the incident weak sum matrix, the restricted generalized weak sum matrix and the generalized weak sum matrix.
\end{proof}

\noindent Hu and Sotirov \cite{HuSotirov2} argue that the linearization based bounds can be improved by extending the sets $\tau_i(Q)$ using a skew symmetric matrix $M$. That is, since each skew symmetric matrix is linearizable, a matrix $\hat{Q}$ is linearizable if and only if $\hat{Q} + M$ is linearizable for all $M$ with $M + M^T = 0$. Using this, the set $\tau_1(Q)$ can be extended to:
\begin{align}
\tau^{skew}_1(Q) :=& \, \, \left\{ \hat{p} \in \mathbb{R}^m \quad  \vline
\begin{aligned}
 \quad b_e + c_f + M_{ef} \leq Q_{ef},~\forall e \in A, f \in \delta^+(e^-)\\
 \hat{p}_e = b_e + c_e,~\forall e \in A \\
 M_{ef} = 0, \quad \forall e \in A, f \notin \delta^+(e^-) \\
b, c \in \mathbb{R}^m, M \in \mathbb{R}^{m \times m}, \, M + M^T = 0 \end{aligned} \right\} \label{skew}
 \end{align}
Note that in $\tau^{skew}_1(Q)$ we only include skew symmetric matrices whose support corresponds to the pairs of successive arcs in $G$, since adding dense skew symmetric matrices would increase computational complexity.
Since $\tau_1(Q) \subseteq \tau_1^{skew}(Q)$, it follows that we can obtain a stronger bound by maximizing over $\tau_1^{skew}(Q)$, see Section \ref{NumericalResults}. The same extension can be applied to any set $\tau_i(Q)$.

\subsection{Preliminary Results} \label{Prelim1}
\noindent In order to check the quality of the bounds derived above, we perform a preliminary numerical study. We create instances according to the $G(n,p)$ Erd\H os-R\' enyi model \cite{ErdosRenyi}. Here $n$ equals the number of nodes and $p$ equals the probability that an arc is included. We create instances for various values of $n$ and $p$. The interaction cost between any two {successive} arcs is drawn uniformly at random as an integer from $\{1, ..., 100\}$.
In Table \ref{PreliminaryExperiments} we present the bounds $v_{LBB}^1, v_{LBB}^2$ and $v_{LBB}^3$ and  the computational times required for computing them.
Experiments are performed using a pc with an Intel(R) Core(TM) i5-6500 CPU, 3.20 GHz and 8 GB memory using CPLEX 12.6 as the solver.
The maximum computation time is set to 3600 seconds and we put `n.a.' in the table when this maximum is reached before a solution is obtained.

\begin{table}[H]
\footnotesize
\centering
\begin{tabular}{@{}cccccccccc@{}}
\toprule
    &     &   &    & \multicolumn{2}{c}{$LBB1$} & \multicolumn{2}{c}{$LBB2$} & \multicolumn{2}{c}{$LBB3$} \\ \cmidrule(l){5-6} \cmidrule(l){7-8} \cmidrule(l){9-10}
$p$ & $n$ & $m$ &  $OPT$ & bound          & time & bound          & time                     & bound          & time                    \\ \midrule
0.1 &  20 &44 & 923 & 923 & 0.047 & 923 & 0.264 & 923 & 0.051 \\
 & 25& 76 & 1039 & 971 & 0.005 & 971 & 0.477 & 999 & 1.227 \\
 &30 & 100 & 1082 & 1066 & 0.013    & 1066 & 0.899    &1082  & 0.787   \\
0.3 & 15 & 61 & 485   & 478          & 0.010          & 478          & 0.635          & 485            & 0.714         \\
    & 20 & 118 & 438   & 377          & 0.031          & 384          & 1.265          & 390         & 77.21         \\
    & 25 &  172 & 382   & 291          & 0.050          & 295         & 2.531          & 295          & 869.0         \\
0.5 & 15 &  116 & 226   & 215          & 0.034          & 215          & 1.407          & 215          & 90.39         \\
    & 20 & 177 &    255   &    189            &     0.059                   & 190	 &       12.05         &     190           &             2105 \\
    & 25 & 306 &   n.a.   &    172   &   0.516    &               173 &          353.6      &      n.a.          &              3600 \\
0.7 & 15 & 149 & 173& 127 & 0.017 & 128 & 0.986 & 128 & 580.5 \\
& 20 & 263& n.a. & 116 & 0.094 & 116 & 7.778 & n.a. & 3600 \\
 & 25& 396& n.a. & 129 & 0.194 & 129 & 18.26 & n.a. & 3600  \\
    \bottomrule
\end{tabular}
\caption{Bounds and computation times in seconds of linearization based bounds on Erd\H os-R\' enyi instances. \label{PreliminaryExperiments}}
\end{table}

\noindent By construction, the optimal solution has always an integer objective value. Therefore, we round up all bounds to the nearest integer.
The results of Table \ref{PreliminaryExperiments} show that the linearization based bounds $LBB1$, $LBB2$ and $LBB3$ do not differ  significantly,
especially for the larger instances. At the same time, the computation times differ significantly. It turns out that $LBB1$ is most efficient.
Therefore, this bound can be preferred when taking both quality and efficiency into account.

\section{Reformulated LBB Approach} \label{ReformulatedLBBApproach}
In this section we discuss how a reformulation of the quadratic cost matrix can be used to obtain a non-decreasing sequence of lower bounds that are based on the linearization based bound. It is important to note that one can construct such a bounding procedure using any sufficient condition for linearizability, not only the ones discussed in Section \ref{LinearizationBasedBounds}. \\ \\
Suppose we are given a sufficient condition for linearizability. Let $S(Q)$ and $\tau(Q)$ be as in Section \ref{LinearizationBasedBounds}, but now for a general sufficient condition. That is, $S(Q)$ is the set consisting of all linearizable cost matrices $\hat{Q}$ of this type with $\hat{Q} \leq Q$ and $\tau(Q)$ consists of the corresponding linearization vectors. Moreover, we assume that the set $\tau(Q)$ is a polyhedral set. Let $Q_0$ be the initial quadratic cost matrix. If $\hat{Q}_0 \in S(Q_0)$, we know there exists some $p_1 \in \tau(Q_0)$ such that $x^T\hat{Q}_0x = x^Tp_1$ for all $x \in X$. This leads to the following reformulation of the objective function
\begin{align}
x^TQ_0x = x^T\hat{Q}_0x + x^T(Q_0 - \hat{Q}_0)x = x^Tp_1 + x^T(Q_0 - \hat{Q}_0)x
\end{align}
for all cycle covers $x \in X$. By letting $p_0$ be the $m \times 1$ zero vector and $Q_1 := Q_0 - \hat{Q}_0$, this relation can be written as $x^Tp_0 + x^TQ_0x = x^Tp_1 + x^TQ_1x$ for all $x \in X$. The vector $p_1$ is taken to be the largest linearization of the matrix $Q_0$ (see (\ref{LBB})) with $Q_1$ being the residual quadratic part. Recall that the bound $v_{LBB}$ is calculated by only taking the linear part of this new objective function into account. By construction we have $x^Tp_0 \leq x^Tp_1$ for all $x \in X$ and $0 \leq Q_1 \leq Q_0$.

Now we can proceed in a similar way by considering the linearization problem of the residual cost matrix $Q_1$. In other words, we search for a linearizable matrix in $S(Q_1)$ and its corresponding linearization vector $p_2$. If we let $Q_2$ be the new residual matrix, then the objective function can be reformulated as $x^Tp_0 + x^TQ_0x = x^T(p_1 + p_2) + x^TQ_2x$. Since $x^Tp_1 \leq x^T(p_1 + p_2)$, a stronger bound can be obtained. This procedure can be repeated to obtain a sequence of bounds. However, it is in general not possible to find a vector $p_2$ for which this bound has strictly improved. That is, since $p_1 + p_2 \in \tau(Q_0)$ this would imply that $p_1$ is not the optimal solution to (\ref{LBB}). Thus, applying this procedure iteratively, the resulting sequence of bounds remains constant after the first iteration. To overcome this issue, we need to reformulate the residual cost matrix in each step. \\ \\
In the literature, various iterative bounding procedures have been proposed, see e.g., \cite{Burkard, PunnenPandey, RostamiMalucelli, Rostami}. In this paper we introduce a new approach that is different in two ways. First, the existing bounding procedures are mainly based on the classical Gilmore-Lawler type bound. Our approach is based on general sufficient conditions for linearizability and we can show that the Gilmore-Lawler type bounding procedure is a special case of this approach, see Section \ref{GilmoreLawler}. Second, the existing bounding procedures mostly use a fixed reformulation of the cost matrix in each iteration.
However, using a fixed reformulation is in general not the best one can do. Here, we search for the reformulation of the cost matrix that results in the strongest bound in the next iteration.
 For this purpose, we define the notion of an equivalent representation of a matrix, see e.g.\ \cite{PunnenPandey}.

\begin{definition} Let $(G, Q)$ be an instance of the \textsc{QCCP}. Then $(G,R)$ is an equivalent representation of $(G,Q)$ if $x^TQx = x^TRx$ for all $x \in X$.
\end{definition}

\noindent If there is no confusion about the graph $G$ under consideration, we say that $R$ is an equivalent representation of $Q$. It is easy to verify that a matrix $R$ is an
equivalent representation of $Q$ if for all $e,f \in A$ we have $R_{ef} + R_{fe} = Q_{ef} + Q_{fe}$. Here,  we focus on a specific type of equivalent representation,
which we call an $\eta$-equivalent representation of $Q$.
\begin{definition}
Given $\eta \in [0,1]$, an equivalent representation $Q^\eta := \eta Q + (1 - \eta)Q^T$ of $Q$ is called an $\eta$-equivalent representation.
\end{definition}
In other words, an $\eta$-equivalent representation is obtained by taking a convex combination of $Q$ and $Q^T$.
Moreover, it follows that if $R$ and $Q$ are equivalent representations, then a linearization of $R$ is also a linearization of $Q$ and vice versa.

Instead of considering the linearization problem of the residual matrix $Q_1$, we can consider the linearization problem of $Q_1^\eta$ for some $\eta \in [0,1]$. Since $Q_1^\eta$ has a different structure than $Q_1$, it is in general possible to find a linearizable matrix $\hat{Q}_1 \in S(Q_1^\eta)$ and a corresponding linearization vector that result in a strictly stronger bound.\\

As already mentioned above, many approaches in the literature are based on taking a fixed value for $\eta$, e.g.\ $\eta = \frac{1}{2}$ which corresponds to the case of symmetrizing. This does not give the best bound in general.
 Instead, we search for  $\eta \in [0,1]$ that results in the strongest bound in each iteration. Suppose we are in step $k$ of the algorithm in which we consider the linearization problem of the residual matrix $Q_{k - 1}$. Then the optimal equivalent representation of $Q_{k -1}$ and its corresponding vector $p_k \in \tau(Q^\eta_{k - 1})$ can be computed simultaneously by solving the following problem:
\begin{align} \label{equivalentrepresent}
r_k := \max_{\substack{y \in \mathbb{R}^{2n}\\ p_k \in \mathbb{R}^m \\ \eta \in [0,1]}}\{\mathbbm{1}_{2n}^Ty \, | \, \, [U^T, V^T]y \leq p_k, \, p_k \in \tau(Q_{k-1}^\eta) \},
\end{align}
which equals the additional amount of quadratic cost that can be linearized in iteration $k$.
Note that if the set $\tau(Q_{k - 1})$ is a polyhedron, then  $\tau(Q_{k-1}^\eta)$ is also a polyhedron and the corresponding  problem  (\ref{equivalentrepresent}) can be solved in polynomial time.
For the sufficient conditions mentioned in Section \ref{LinearizationBasedBounds} this is indeed the case. \\ \\
Finally, we provide a new bounding procedure that is based on iteratively finding the best equivalent representation of the residual cost matrix and its optimal linearizable matrix. Starting with $Q_0 = Q$, the goal is to find the best linearizable matrix $\hat{Q}_{k-1}$ of an equivalent representation of the residual matrix $Q_{k-1}$ and its corresponding linearization vector $p_k$. In each iteration we compute $r_k$ by (\ref{equivalentrepresent}) and let
$d_k = d_{k - 1} + p_k$ which equals the total linearization vector of $Q$. The final bound is given by the sum of all $r_k$'s, which we call the reformulation based bound.
The procedure is given in Algorithm \ref{AlgorithmLBB}.

\begin{algorithm}[H]
\small
\caption{$LBB$ Reformulation Algorithm}\label{AlgorithmLBB}
\begin{algorithmic}[1]
\State $Q_0 = Q$, $d_0 = {\mathbf 0}$, $k = 1$, $r_0 = \infty$
\While {$r_{k-1} >0$}
\State Compute $r_k, p_k$ and $\eta$ using (\ref{equivalentrepresent}).
\State Construct the linearizable matrix $\hat{Q}_{k-1}$ using the optimal solution of (\ref{equivalentrepresent}). \Comment{See Remark \ref{RemarkAlgorithm}}
\State $Q_k \gets \eta Q_{k -1} + (1 - \eta)Q_{k - 1}^T - \hat{Q}_{k - 1}$
\State $d_k \gets d_{k - 1} + p_k$
\State $k \gets k + 1$
\EndWhile
\State $v_{RBB} = \sum_{i = 1}^{k-1}r_i$ \\
\Return $d_k$, $v_{RBB}$
\end{algorithmic}
\end{algorithm}

\begin{remark} \label{RemarkAlgorithm}
Note that steps 3 and 4 of Algorithm 1 depend on the specific sufficient condition for linearizability.
For instance, for the incident weak sum condition we construct  in step 4 the linearizable matrix $\hat{Q}_{k-1}$ in the following way
$(\hat{Q}_{k-1})_{ef} := b_e + c_f$ for all $e \in A, f \in \delta^+(e^-)$ and $(\hat{Q}_{k-1})_{ef} := 0$ otherwise, where $b, c \in \mathbb{R}^m$ are obtained from (\ref{equivalentrepresent}).
\end{remark}

\noindent Hu and Sotirov \cite{HuSotirov2} show that in the case that the linearizable matrix $\hat{Q}$ is in the form $\hat{Q} = [U^T, V^T]Y + \text{Diag}(z)$
for some $Y \in \mathbb{R}^{2n \times m}$ and $z \in \mathbb{R}^m$, the bound $v_{RBB}$ is dominated by the solution of the first level RLT relaxation  introduced by Adams and Sherali \cite{AdamsSherali1, AdamsSherali2}.
Here RLT stands for reformulation linearization technique.
 In \cite{HuSotirov2} it is moreover shown that the first level RLT  bound, denoted by    $v_{RLT1}$,
  can be obtained by searching for the optimal linearizable matrix $\hat{Q}$ of the form $\hat{Q} = [U^T, V^T]Y + M + \text{Diag}(z)$ where $M$ is a skew symmetric matrix.

Our preliminary numerical results show that the above algorithm does not improve significantly the $LBB1$ bound. However, in the next section we show that
our approach outperforms known iterative approaches related to the Gilmore-Lawler bounds.

\section{The Gilmore-Lawler Type Bound} \label{GilmoreLawler}
In this section we consider the classical Gilmore-Lawler type bound. The GL procedure is a well-known approach to construct lower bounds for quadratic 0-1 optimization problems, see e.g.\ \cite{Gilmore, Lawler, RostamiMalucelli, Rostami}. We provide a compact formulation of the GL type bound that can be used to compute the bound by a single LP-problem, instead of solving $m$ subproblems. Moreover, we show that this bound in fact belongs to the family of linearization based bounds. Therefore, based on the results of Section \ref{ReformulatedLBBApproach}, we provide a bounding procedure that computes the best GL type bound in each step of the algorithm. We conclude this section by testing this new bounding procedure on some preliminary test instances.

\subsection{The classical GL type bound} \label{classicalGL}
In the objective function of the \textsc{QCCP}, see (\ref{QCCPdefinition}), we have the quadratic term $x_ex_f$ for each two arcs $e,f \in A$ placed in succession on a cycle. To get rid of this quadratic term, for each given arc $e \in A$, potentially in the solution, we consider the cycle cover containing $e$ with minimum interaction cost with $e$. We denote this minimum contribution of arc $e$ to a solution by $z_e$.
In particular, for all $e \in A$ we have
\leqnomode
\begin{equation}
z_e := \min\{Q_{e,:}x \, | \, \, x \in X, x_e = 1\} \tag{$P_e$},
\end{equation}
where  $Q_{e,:}$ denotes the $e$th row of the cost matrix $Q$.
The feasible set of $(P_e)$ equals the set of all node-disjoint cycle covers containing arc $e$.
If this set is empty, then we set $z_e$ equal to 0 since  arc $e$ cannot contribute to a cycle cover. \\
Let $z \in \mathbb{R}^m$ be the vector consisting of the elements $z_e$ for all $e \in A$.
Then the classical GL type bound is obtained by solving the following  \textsc{CCP}:
\begin{align*}
v_{GL} := \min\{z^Tx \, | \, \, x \in X\} \tag{$GL$}.
\end{align*}
Note that the constraint matrices of $(P_e)$ and $(GL)$ are totally unimodular. For this reason, we can drop the integrality constraints and solve $(GL)$ and $(P_e)$ for all $e \in A$ as linear programming problems. \\ \\
Besides computing the GL type bound by solving $(GL)$ and $(P_e)$ for all $e \in A$, we can also obtain its value by solving an integer linear programming (ILP) problem.
The problem $(GL_{ILP})$ is defined as follows:
\reqnomode
\begin{flalign}
(GL_{ILP}) && \min \quad & \sum_{e \in A} \sum_{f \in A}Q_{ef}y_{ef}  \nonumber \\
&& \text{s.t.} \quad & \sum_{f \in \delta^+(i)} y_{ef} = \sum_{f \in \delta^-(i)} y_{ef} = x_e & & \forall i \in N, \forall e \in A & \label{glilp1} \\
&& & y_{ee} = x_e & & \forall e \in A & \label{glilp2} \\
&& & y_{ef} \in \{0,1\}, \, x \in X & & \forall e, f \in A  \label{glilp3}
\end{flalign}
It follows from the constraints that if $x_e = 1$, then $y_{e,:} := [y_{e1}, ..., y_{em}]$ is the characteristic vector of the cheapest cycle cover containing arc $e$ and if $x_e = 0$, then $y_{e,:}$ equals the zero vector.

Let $(CGL_{ILP})$ be the continuous relaxation of $(GL_{ILP})$. In this continuous relaxation we can omit the upper bounds on $x_e$ and $y_{ef}$ for all $e, f \in A$, since it is never optimal to set the value of these variables larger than one. We can compute the GL type bound by solving $(CGL_{ILP})$ as stated by the following theorem. This theorem is based on a similar result for the \textsc{QMST}, see \cite{RostamiMalucelli}.
\begin{theorem}
The optimal value of $(CGL_{ILP})$ equals $v_{GL}$. \label{CGLProof}
\end{theorem}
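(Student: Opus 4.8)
The plan is to establish the two inequalities separately, namely $\mathrm{val}(CGL_{ILP}) \le v_{GL}$ and $\mathrm{val}(CGL_{ILP}) \ge v_{GL}$. The structural fact that drives everything is that the objective $\sum_{e,f} Q_{ef} y_{ef}$ depends on $y$ only, while for a \emph{fixed} $x$ the constraints \eqref{glilp1}--\eqref{glilp2} link each row block $y_{e,:}$ to the single scalar $x_e$ and to no other block. Thus, after fixing a feasible $x$, the minimization over $y$ separates into $m$ independent problems, one per arc $e$, each of which is---up to the scaling factor $x_e$ on the right-hand sides---exactly the continuous relaxation of $(P_e)$.

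The upper bound $\le$ is the easy direction: I would take an integral optimizer $x^\ast \in X$ of $(GL)$ (integral by total unimodularity of $[U^T,V^T]^T$), and for each arc $e$ with $x^\ast_e = 1$ set $y_{e,:}$ to be the characteristic vector of a cheapest cycle cover through $e$, i.e.\ an optimal solution of $(P_e)$ of value $z_e$, while setting $y_{e,:} = 0$ whenever $x^\ast_e = 0$. One checks directly that $(x^\ast, y)$ satisfies \eqref{glilp1}--\eqref{glilp3}, so it is feasible for $(CGL_{ILP})$, and its objective value equals $\sum_{e : x^\ast_e = 1} z_e = z^T x^\ast = v_{GL}$.

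The lower bound $\ge$ is where the real content lies, and is the step I expect to be the main obstacle, since a priori the continuous relaxation could be strictly cheaper. Let $(x,y)$ be any feasible point of $(CGL_{ILP})$. For each arc $e$ I would show $\sum_{f} Q_{ef} y_{ef} \ge z_e x_e$: if $x_e = 0$, nonnegativity together with the zero degree sums forces $y_{e,:} = 0$; if $x_e > 0$, then $y_{e,:}/x_e$ is feasible for the continuous relaxation of $(P_e)$, and because the constraint matrix of $(P_e)$ is totally unimodular its continuous and integral optima coincide and both equal $z_e$, whence $\sum_f Q_{ef} y_{ef} \ge z_e x_e$. Summing over $e$ gives objective value at least $z^T x$; and since $x$ is feasible for the continuous relaxation of $X$, total unimodularity of $[U^T,V^T]^T$ yields $z^T x \ge \min\{z^T x' : x' \in X\} = v_{GL}$. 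The two inequalities together give $\mathrm{val}(CGL_{ILP}) = v_{GL}$.

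A final technical point to dispose of concerns arcs $e$ for which $(P_e)$ is infeasible, where $z_e$ is set to $0$ by convention: for such $e$ the continuous relaxation of $(P_e)$ is infeasible as well (again by total unimodularity, since no integral cover through $e$ exists), so the per-arc argument above forces $x_e = 0$ in any feasible $(x,y)$, making the identity $\sum_f Q_{ef} y_{ef} = 0 = z_e x_e$ hold trivially; such an arc cannot carry weight in $(GL)$ either.
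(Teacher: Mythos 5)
Your proposal is correct, but it takes a genuinely different route from the paper. You argue entirely in the primal: for $\le$ you exhibit an integral feasible point of $(GL_{ILP})$ of value $z^Tx^\ast=v_{GL}$, and for $\ge$ you exploit the row-wise separability of the constraints \eqref{glilp1}--\eqref{glilp2} to show that each normalized block $y_{e,:}/x_e$ lies in the LP relaxation of $(P_e)$, invoking total unimodularity twice (once per $(P_e)$ to identify the LP optimum with $z_e$, once for $X$ to conclude $z^Tx\ge v_{GL}$). The paper instead works entirely in the dual: it writes out $(DCGL_{ILP})$, observes that for each fixed $e$ the variables $(\lambda_{e,\cdot},\alpha_{e,\cdot},\theta_e)$ can be maximized independently, recognizes each such subproblem $(DCP_e)$ as the dual of the continuous relaxation of $(P_e)$ (so its value is $z_e$ by strong duality), and then recognizes the residual master problem as the dual of the relaxation of $(GL)$. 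The two arguments are mirror images resting on the same pillars (separability over the rows indexed by $e$, and total unimodularity of $[U^T,V^T]^T$ with or without the appended unit row for $x_e=1$); yours is the more elementary and self-contained, and it handles explicitly the degenerate case where $(P_e)$ is infeasible and $z_e$ is set to $0$ by convention, which the paper passes over in silence. What the paper's dual route buys is the explicit dual variables $(\lambda,\alpha,\theta,\mu,\gamma)$, which are precisely the objects reused in the proof of Theorem \ref{EquivalenceGLLBB} to identify $v_{GL}$ with the linearization based bound $v^{GL}_{LBB}$, so the duality-based bookkeeping is not incidental to the rest of the section. One small point worth making explicit in your write-up: the matrix obtained by appending the unit row enforcing $x_e=1$ to $[U^T,V^T]^T$ is still totally unimodular, which is what justifies both your claim that the continuous and integral optima of $(P_e)$ coincide and your claim that infeasibility of the integral $(P_e)$ implies infeasibility of its relaxation (a nonempty polytope with TU constraint matrix and integral right-hand side has an integral vertex).
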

\begin{proof}
Let $\lambda_{e,i}$ and $\alpha_{e,i}$ be the dual variables corresponding to constraints (\ref{glilp1}), i.e.\ $\lambda_{e,i}$ corresponds to $\sum_{f \in \delta^+(i)}y_{ef} = x_e$ and $\alpha_{e,i}$ corresponds to $\sum_{f \in \delta^-(i)}y_{ef} = x_e$. Similarly, let $\mu_i$ and $\gamma_i$ be the dual variables corresponding to the first and second equalities of constraints (\ref{X}), and $\theta_e$ the dual variable corresponding to constraints (\ref{glilp2}). The dual problem of $(CGL_{ILP})$ is as follows:
\begin{flalign}
 (DCGL_{ILP})  \qquad \max \quad & \sum_{i \in N} \mu_i + \sum_{i \in N}\gamma_i \nonumber \\
 \text{s.t.} \quad & \lambda_{e,f^+} + \alpha_{e,f^-} \leq Q_{ef} && \forall e, f \in A , \, f \neq e \quad\label{dglilp1}  \\
 & \lambda_{e,e^+} + \alpha_{e,e^-} + \theta_e \leq Q_{ee} && \forall e \in A & \label{dglilp2}\\
 & - \sum_{i \in N}\lambda_{e,i} - \sum_{i \in N}\alpha_{e,i} + \gamma_{e^-} + \mu_{e^+}  - \theta_e \leq 0 & &  \forall e\in A.  \label{dglilp3}
\end{flalign}
Constraint (\ref{dglilp3}) can be rewritten as  $\gamma_{e^-} + \mu_{e^+}  \leq \sum_{i \in N}\lambda_{e,i} + \sum_{i \in N}\alpha_{e,i} + \theta_e$ for all $e \in A$. In order to maximize the objective function of $(DCGL_{ILP})$, we maximize the right hand side of this inequality subject to constraints (\ref{dglilp1})-(\ref{dglilp2}). This gives for each $e \in A$ the following subproblem:
\leqnomode
\begin{align*}
z'_e : = \max & \left\lbrace \sum_{i \in N}\lambda_{e,i} + \sum_{i \in N}\alpha_{e,i} + \theta_e \, | \, \, (\ref{dglilp1}),  (\ref{dglilp2}) \right\rbrace . \tag{$DCP_e$}
\end{align*}
\reqnomode
For each fixed $e \in A$ the subproblem given above equals the dual of the continuous relaxation of $(P_e)$. By strong duality we know $z'_e = z_e$ for all $e \in A$. Substitution of this term into constraint (\ref{dglilp3}) gives a rewritten formulation for $(DCGL_{ILP})$:
\begin{align*}
\max & \left\lbrace \sum_{i \in N} \mu_i + \sum_{i \in N}\gamma_i \, | \, \,   \mu_{e^+} + \gamma_{e^-} \leq z_e \quad \forall e \in A \right\rbrace .
\end{align*}
This problem exactly equals the dual of the continuous relaxation of $(GL)$. Because of strong duality, it follows that the optimal objective value of $(CGL_{ILP})$ equals $v_{GL}$.
\end{proof}

We can show that the Gilmore-Lawler type bound for the \textsc{QCCP} in fact belongs to the family of linearization based bounds introduced in Section \ref{LinearizationBasedBounds}. That is, we can obtain $v_{GL}$ by searching for a linearizable quadratic cost matrix $\hat{Q}$ of a specific type that is as close as possible to $Q$.
The required linearizability condition on  $\hat{Q}$ is given below, and it  differs  from the sufficient conditions  presented in Section \ref{SufficientConditions}.

\begin{proposition} \label{GLsufficientcondition}
If there exists $B, C \in \mathbb{R}^{m \times n}$ and $t \in \mathbb{R}^m$ such that $Q_{ef} = B_{e, f^+} + C_{e, f^-}$ for $e \neq f$ and $Q_{ee} = B_{e,e^+} + C_{e,e^-} + t_e$ for all $e \in A$, then $Q$ is linearizable with vector $p$ given by $p_e = t_e + \sum_{i = 1}^nB_{e,i} + \sum_{i = 1}^nC_{e,i}$.
\end{proposition}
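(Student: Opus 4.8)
The plan is to verify linearizability directly, by showing that $x^TQx = p^Tx$ holds for every cycle cover $x \in X$, in the same spirit as the proofs of Propositions \ref{weaksum} and \ref{generalizedweaksum}. The only genuinely new feature compared to those results is the diagonal correction term $t_e$, which I would handle by exploiting the idempotency $x_e^2 = x_e$ of the binary variables.

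First I would expand $x^TQx = \sum_{e \in A}\sum_{f \in A}Q_{ef}x_ex_f$ and split the double sum into its diagonal ($e=f$) and off-diagonal ($e \neq f$) parts. Substituting the hypotheses $Q_{ef} = B_{e,f^+} + C_{e,f^-}$ for $e \neq f$ and $Q_{ee} = B_{e,e^+}+C_{e,e^-}+t_e$, and using $x_e^2 = x_e$ on the diagonal, the contribution $\sum_e t_e x_e$ separates out cleanly, while the remaining diagonal part $\sum_e (B_{e,e^+}+C_{e,e^-})x_e = \sum_e (B_{e,e^+}+C_{e,e^-})x_ex_e$ has precisely the shape of the $e=f$ term of the off-diagonal sum. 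This is the crux of the argument: it lets me recombine the diagonal (minus $t$) with the off-diagonal part into a single sum $\sum_{e,f}(B_{e,f^+}+C_{e,f^-})x_ex_f$ over all ordered pairs.

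Next I would evaluate this double sum using the degree constraints defining $X$. Grouping the $B$-terms by the starting node of $f$ gives $\sum_f B_{e,f^+}x_f = \sum_{i \in N}B_{e,i}\sum_{f \in \delta^+(i)}x_f = \sum_{i=1}^n B_{e,i}$, since $\sum_{f \in \delta^+(i)}x_f = 1$ for every cycle cover; symmetrically, grouping the $C$-terms by the ending node of $f$ and using $\sum_{f \in \delta^-(i)}x_f = 1$ yields $\sum_f C_{e,f^-}x_f = \sum_{i=1}^n C_{e,i}$. Substituting back and factoring out $x_e$ then gives $x^TQx = \sum_{e \in A}\left(t_e + \sum_{i=1}^n B_{e,i} + \sum_{i=1}^n C_{e,i}\right)x_e = p^Tx$, which is exactly the claimed vector $p$.

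I do not expect a real obstacle here, as the argument is essentially a bookkeeping computation. The one point demanding care is the absorption of the diagonal into the full double sum: one must track carefully whether the summation ranges over pairs with $e \neq f$ or over all ordered pairs, and invoke $x_e^2 = x_e$ at exactly the right moment so that the $t_e$ contribution is isolated while the rest reassembles into the clean form. Everything else follows from the two flow-conservation constraints of a cycle cover, just as in the earlier weak-sum propositions.
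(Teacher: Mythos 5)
Your proof is correct. The paper itself proves this proposition by reduction: it defines $\tilde{Q}_{ef} = B_{e,f^+} + C_{e,f^-}$ for \emph{all} pairs $e,f$, observes that $\tilde{Q}$ is a generalized weak sum matrix with $D = T = 0$ so that Proposition \ref{generalizedweaksum} applies directly, and then writes $Q = \tilde{Q} + \text{Diag}(t)$ to absorb the diagonal correction. Your argument inlines exactly this: your step of "recombining the diagonal (minus $t$) with the off-diagonal part into a single sum over all ordered pairs" via $x_e^2 = x_e$ is precisely the paper's extension of the off-diagonal formula to the matrix $\tilde{Q}$, and your evaluation of $\sum_f B_{e,f^+}x_f$ and $\sum_f C_{e,f^-}x_f$ through the degree constraints is the computation carried out inside the proof of Proposition \ref{generalizedweaksum} (specialized to $D = T = 0$). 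So the two proofs rest on the same two ingredients — idempotency of binary variables to handle $\text{Diag}(t)$, and flow conservation to collapse the double sum — with the only difference being that the paper cites the earlier proposition where you redo the bookkeeping by hand; your version is self-contained, the paper's is shorter.
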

\begin{proof}
Let $\tilde{Q}$ be defined as $\tilde{Q}_{ef} = B_{e,f^+} + C_{e,f^-}$ for all $e,f \in A$. Then $\tilde{Q}$ can be seen as a generalized weak sum matrix where $D$ and $T$ are equal to the zero matrix, see Definition \ref{generalizedweaksumdef}. According to Proposition \ref{generalizedweaksum}, $\tilde{Q}$ is linearizable with vector $\tilde{p} = \sum_{i =1}^nB_{e,i} + \sum_{i = 1}^nC_{e,i}$. Since $Q = \tilde{Q} + \text{Diag}(t)$, it follows that $Q$ is linearizable with vector $p$ given by $p_e = t_e + \sum_{i = 1}^nB_{e,i} + \sum_{i = 1}^nC_{e,i}$.
\end{proof}
Similar to the notation used in Section \ref{LinearizationBasedBounds}, let $S_{GL}(Q)$ denote the set of all linearizable cost matrices $\hat{Q} \in \mathbb{R}^{m \times m}$ that satisfy the conditions of Proposition \ref{GLsufficientcondition}. Moreover, let $\tau_{GL}(Q)$ be the following polyhedron:
\begin{align}\label{tauGLQ}
\tau_{GL}(Q) &:= \{\hat{p} \in \mathbb{R}^m \, | \, \, x^T\hat{p} = x^T\hat{Q}x \text{ for all } x \in X \, , \, \, \hat{Q} \in S_{GL}(Q)\},
\end{align}
and
\begin{align} \label{v^GL_LBB}
v^{GL}_{LBB} := \max_{\substack{y \in \mathbb{R}^{2n} \\ \hat{p} \in \mathbb{R}^m}} \{ \mathbbm{1}_{2n}^Ty \, | \, [U^T, V^T]y \leq \hat{p}, \, \, \hat{p} \in \tau_{GL}(Q)\}.
\end{align}
Now we prove the main result of this section which states that the classical Gilmore-Lawler type bound can be seen as a special case of linearization based bound.
\begin{theorem} \label{EquivalenceGLLBB}
We have $v_{LBB}^{GL} = v_{GL}$.
\end{theorem}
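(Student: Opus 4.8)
The plan is to rewrite $v^{GL}_{LBB}$ as a single explicit linear program and then recognize it, after a relabeling of variables, as the dual problem $(DCGL_{ILP})$ that appears in the proof of Theorem \ref{CGLProof}. Since that theorem identifies the optimal value of $(CGL_{ILP})$ with $v_{GL}$, and $(DCGL_{ILP})$ is its LP dual, strong duality will then give $v^{GL}_{LBB}=v_{GL}$ immediately.

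First I would make the polyhedron $\tau_{GL}(Q)$ from \eqref{tauGLQ} explicit. By the definition of $S_{GL}(Q)$ --- the matrices $\hat Q$ of the form in Proposition \ref{GLsufficientcondition} with $Q-\hat Q\ge 0$ --- a vector $\hat p$ belongs to $\tau_{GL}(Q)$ exactly when there exist $B,C\in\mathbb{R}^{m\times n}$ and $t\in\mathbb{R}^m$ with
\[
B_{e,f^+}+C_{e,f^-}\le Q_{ef}\ \ (e\neq f),\qquad B_{e,e^+}+C_{e,e^-}+t_e\le Q_{ee},\qquad \hat p_e=t_e+\sum_{i=1}^n B_{e,i}+\sum_{i=1}^n C_{e,i}.
\]
Here I would flag one subtlety: $\tau_{GL}(Q)$ is defined through the abstract requirement $x^T\hat p=x^T\hat Qx$ for all $x\in X$, so it may a priori contain linearization vectors of a given $\hat Q$ other than the canonical one supplied by Proposition \ref{GLsufficientcondition}. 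This does not affect the value, since any two linearization vectors $\hat p,\hat p'$ of the same $\hat Q$ satisfy $\hat p^Tx=(\hat p')^Tx$ for every $x\in X$ and hence $OPT(\hat p)=OPT(\hat p')$; thus the maximization in \eqref{v^GL_LBB} may be restricted to the canonical vectors above without changing $v^{GL}_{LBB}$.

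Next I would substitute this description into \eqref{v^GL_LBB} and eliminate $\hat p$ (which is licit, as it enters only through the equality above). Writing $y=(\mu,\gamma)$ with $\mu,\gamma\in\mathbb{R}^n$ the blocks of $y$ associated with $U$ and $V$, the constraint $[U^T,V^T]y\le\hat p$ reads $\mu_{e^+}+\gamma_{e^-}\le\hat p_e$ for all $e\in A$, and the objective $\mathbbm{1}_{2n}^Ty$ becomes $\sum_{i\in N}\mu_i+\sum_{i\in N}\gamma_i$. Using the equality for $\hat p_e$, this constraint is precisely $\mu_{e^+}+\gamma_{e^-}\le t_e+\sum_{i}B_{e,i}+\sum_{i}C_{e,i}$, which is exactly constraint \eqref{dglilp3} after rearranging. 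Identifying the blocks of $y$ with the multipliers $\mu,\gamma$ and the triple $(B,C,t)$ with the dual variables $(\lambda,\alpha,\theta)$, the two inequality families in the display above coincide term-by-term with \eqref{dglilp1} and \eqref{dglilp2}, and the objectives agree. Hence the program defining $v^{GL}_{LBB}$ is literally $(DCGL_{ILP})$.

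Finally, I would close the argument: by strong LP duality the optimal value of $(DCGL_{ILP})$ equals that of its primal $(CGL_{ILP})$, and Theorem \ref{CGLProof} states the latter equals $v_{GL}$; therefore $v^{GL}_{LBB}=v_{GL}$. I expect the only genuinely delicate step to be the bookkeeping in eliminating $\hat p$ --- confirming that the defining equality for $\hat p$ collapses onto \eqref{dglilp3} and that the remaining inequalities line up exactly with \eqref{dglilp1}--\eqref{dglilp2} under the stated correspondence --- together with the care needed to justify restricting $\tau_{GL}(Q)$ to its canonical linearization vectors; once this matching is verified, the equality of the two values is automatic.
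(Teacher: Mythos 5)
Your proposal is correct and follows essentially the same route as the paper: write $v^{GL}_{LBB}$ as an explicit LP via the polyhedral description of $\tau_{GL}(Q)$, identify it with $(DCGL_{ILP})$ under the correspondence $(B,C,t,y)\leftrightarrow(\lambda,\alpha,\theta,[\mu^T,\gamma^T]^T)$, and conclude by strong duality together with Theorem \ref{CGLProof}. The only difference is your explicit remark justifying the restriction of $\tau_{GL}(Q)$ to the canonical linearization vectors, a point the paper passes over silently but which does not alter the argument.
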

\begin{proof}
By using the polyhedral description of $S_{GL}(Q)$ following from Proposition \ref{GLsufficientcondition}, the optimization problem in (\ref{v^GL_LBB}) can be written as follows:
\begin{align}
v_{LBB}^{GL} =\max \quad & \sum_{i = 1}^{2n}y_i \\
\text{s.t.} \quad & [U^T, V^T]y \leq \hat{p} \label{LBB_GL0}\\
& B_{e,f^+} + C_{e,f^-} \leq Q_{ef} \quad \qquad \quad \, \, \, \, \, \forall e,f \in A, f \neq e \label{LBB_GL1} \\
& B_{e,e^+} + C_{e,e^-} + t_e \leq Q_{ee} \quad\, \qquad \forall e \in A \label{LBB_GL2}\\
& \hat{p}_e = t_e + \sum_{i = 1}^nB_{e,i} + \sum_{i = 1}^nC_{e,i} \qquad \forall e \in A \label{LBB_GL4}\\
& y \in \mathbb{R}^{2n}, \hat{p}, t \in \mathbb{R}^m, B, C \in \mathbb{R}^{m \times n}. \label{LBB_GL5}
\end{align}
We show that this optimization problem is equivalent to $(DCGL_{ILP})$, the dual problem of the continuous relaxation of $(GL_{ILP})$. Take $B_{e,i} = \lambda_{e,i}$ and $C_{e,i} = \alpha_{e,i}$ for all $e \in A$ and $i \in N$, where $\lambda$ and $\alpha$ denote the dual vectors belonging to constraints (\ref{glilp1}). Similarly, let $t = \theta$ where $\theta$ equals the dual vector to constraints (\ref{glilp2}). Finally, let $y = [\mu^T, \gamma^T]^T$, where $\mu$ and $\gamma$ are the dual variables belonging to constraints (\ref{X}). By substitution of these variables and combining constraints (\ref{LBB_GL0}) and (\ref{LBB_GL4}), we obtain the problem $(DCGL_{ILP})$, i.e., the dual of $(CGL_{ILP})$. Thus, we have $v_{LBB}^{GL} = v_{GL}$.
\end{proof}
Theorem \ref{EquivalenceGLLBB} shows that the GL type bound belongs to the family of linearization based bounds. This is also shown by Hu and Sotirov \cite{HuSotirov2} and Rostami et al.\ \cite{Rostami}, however our proof is very different as we exploit the fact that $v_{GL}$ can be obtained by solving an LP problem, i.e., $(CGL_{ILP})$.
Additionally, we show here that the computation of the GL type bound is equivalent to the search for the optimal linearizable cost matrix $\hat{Q}$ satisfying the properties of Proposition \ref{GLsufficientcondition}.

\subsection{The best Gilmore-Lawler type bound} \label{bestGL}
Section \ref{classicalGL} shows that the calculation of the classical GL type bound fits in the general framework discussed in Section \ref{LinearizationBasedBounds}. In this section we apply the reformulation procedure of Section \ref{ReformulatedLBBApproach} to the GL type bound. We also show that our approach outperforms several iterative approaches from the literature.

In order to apply Algorithm \ref{AlgorithmLBB} to the sufficient condition for linearizability of Proposition \ref{GLsufficientcondition}, we need to define how to calculate $r_k$ for each iteration $k$, see (\ref{equivalentrepresent}).
We rewrite the set $\tau_{GL}(Q)$, see  \eqref{tauGLQ}, as follows:
\begin{align*}
\tau_{GL}(Q) = \{\hat{p} \in \mathbb{R}^m \, | \, \, t \in \mathbb{R}^m, B, C \in \mathbb{R}^{m \times n}, (\ref{LBB_GL1}), (\ref{LBB_GL2}), (\ref{LBB_GL4}) \},
\end{align*}
which is clearly a polyhedron. Then for all $k \geq 1$ we calculate the additional amount of quadratic cost that is linearized by solving:
\begin{align}
r_k := \max_{\substack{y \in \mathbb{R}^{2n} \\ p_k \in \mathbb{R}^m \\ \eta \in [0,1]}} \{ \mathbbm{1}_{2n}^T y \, | \, \, [U^T, V^T ] y = p_k , p_k \in \tau_{GL}(Q_{k-1}^\eta) \}. \label{GLr_k}
\end{align}
Note that as opposed to the constraints in (\ref{equivalentrepresent}), we have replaced the constraint $[U^T, V^T]y \leq p_k$ by an equality constraint. This does not change the value of $r_k$. To verify this, suppose we solve (\ref{GLr_k}) using the inequality constraint $[U^T, V^T]y \leq p_k$ and let $\hat{y}, \hat{p}_k$ and $\hat{t}$ be the corresponding optimal solutions. Let $e \in A$ be such that the inequality constraint is satisfied with strict inequality. Then without changing $\hat{y}$, we can reduce $\hat{t}_e$ (and thus $\hat{p}_e$) such that we get equality for $e \in A$. Although it changes the linearization vector $\hat{p}$, the resulting bound remains equal. To verify this, notice that only the left hand side of constraint (\ref{LBB_GL2}) is decreased, so the solution is still feasible and the optimal value $r_k$ remains unchanged. From this, it follows that one may replace $[U^T, V^T]y \leq p_k$ by an equality constraint and solve $r_k$ as in (\ref{GLr_k}).

Algorithm \ref{AlgorithmLBB} using (\ref{GLr_k}) to compute $r_k, p_k$ and $\eta$ gives a new bounding procedure for the \textsc{QCCP}. We call the resulting bound the reformulated GL type bound ($RGL$) and denote its value by $v_{RGL}$. By construction, it iteratively computes the best Gilmore-Lawler type bound among all equivalent representations of the quadratic cost matrix.

The algorithm  proposed in this section satisfies another interesting property, namely the vectors $d_k$ satisfy the constant value property for all $k \geq 0$.
This is an important property for linearizability because the set of linearizable cost matrices for combinatorial optimization problems with interaction costs can be
characterized by the constant value property, under certain conditions, see \cite{LendlCustiPunnen}.

\begin{theorem} \label{CVP}
All $d_k$ where $k \geq 0$ computed during the $RGL$ approach, satisfy the constant value property, i.e., we have $x^Td_k = \bar{x}^Td_k$ for all feasible cycle covers $x, \bar{x} \in X$.
\end{theorem}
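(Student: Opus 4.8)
The plan is to exploit the equality constraint $[U^T, V^T]y = p_k$ appearing in problem \eqref{GLr_k}, which is precisely the feature distinguishing this formulation from \eqref{equivalentrepresent}. The key observation is that this equality forces each linearization vector $p_k$ to lie in the image of $[U^T, V^T]$, and any vector in that image automatically satisfies the constant value property over $X$. Since $d_k$ is merely the running sum of the $p_k$'s, a short induction then finishes the argument.

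First I would make the structure of $p_k$ explicit. Writing $y = (\mu, \gamma)$ with $\mu, \gamma \in \mathbb{R}^n$, the equality constraint reads $(p_k)_e = (U^T\mu + V^T\gamma)_e = \mu_{e^+} + \gamma_{e^-}$ for all $e \in A$. Then for any cycle cover $x \in X$,
\begin{align*}
x^Tp_k = \sum_{e \in A}(\mu_{e^+} + \gamma_{e^-})x_e = \sum_{i \in N}\mu_i\sum_{e \in \delta^+(i)}x_e + \sum_{i \in N}\gamma_i\sum_{e \in \delta^-(i)}x_e = \sum_{i \in N}\mu_i + \sum_{i \in N}\gamma_i,
\end{align*}
where the last equality uses the cycle cover constraints $\sum_{e \in \delta^+(i)}x_e = \sum_{e \in \delta^-(i)}x_e = 1$ from \eqref{X}. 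As this value does not depend on $x$, each $p_k$ satisfies the constant value property.

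The proof then concludes by induction on $k$. The base case $d_0 = \mathbf{0}$ trivially satisfies the CVP since $x^Td_0 = 0$ for all $x \in X$. For the inductive step I would use the update rule $d_k = d_{k-1} + p_k$: assuming $x^Td_{k-1}$ is constant over all cycle covers and having shown $x^Tp_k$ is constant, the sum $x^Td_k = x^Td_{k-1} + x^Tp_k$ is constant as well. Hence $x^Td_k = \bar{x}^Td_k$ for all $x, \bar{x} \in X$ and all $k \geq 0$.

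I do not anticipate a significant obstacle, as the result hinges entirely on recognizing that the \emph{equality} constraint (rather than the inequality $[U^T, V^T]y \leq p_k$) is what confines $p_k$ to the image of $[U^T, V^T]$. The paragraph preceding the theorem already justifies that this equality may be imposed in \eqref{GLr_k} without altering $r_k$, so the only genuine content is the one-line computation above showing that image vectors are constant on $X$; everything else is bookkeeping over the iterations.
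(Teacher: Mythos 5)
Your proposal is correct and follows essentially the same route as the paper's proof: both arguments hinge on the equality constraint $[U^T,V^T]y = p_k$ forcing $(p_k)_e = \mu_{e^+} + \gamma_{e^-}$, so that $x^Tp_k = \sum_{i\in N}\mu_i + \sum_{i\in N}\gamma_i$ is independent of the cycle cover $x$, followed by the same induction on $k$ via $d_k = d_{k-1} + p_k$. No gaps; the computation showing image vectors of $[U^T,V^T]$ are constant on $X$ is exactly the paper's key step.
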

\begin{proof}
We apply a proof by induction on $k$. Note that the vector $d_0$ equals the $m \times 1$ vector of zeros which trivially satisfies the constant value property.

Now assume that the induction hypothesis is true for iteration $k-1$, i.e., $x^Td_{k-1} = \bar{x}^Td_{k-1}$ for all feasible cycle covers $x, \bar{x} \in X$. In iteration $k$ we solve (\ref{GLr_k}). Let $\hat{y} \in \mathbb{R}^{2n}$ and $\hat{p}\in \mathbb{R}^m$ be the optimal variables for this problem and split $\hat{y} = [\mu^T, \lambda^T]^T$ with $\mu, \lambda \in \mathbb{R}^n$. It follows that $[U^T, V^T]\hat{y}= U^T\mu + V^T\lambda = \hat{p}$. Now let $x \in X$ be any feasible cycle cover in $G$. Then we can sum up the rows of this system of equalities for all arcs $e \in A$ in the cycle cover implied by $x$:
\begin{align*}
\sum_{\{e \in A: x_e = 1\}} (\mu_{e^+} + \lambda_{e^-})  = \sum_{\{e \in A: x_e = 1\}}\hat{p}_e \quad \Leftrightarrow \quad \sum_{i \in N}\mu_i + \sum_{i \in N} \lambda_i  = x^T\hat{p}
\end{align*}
where we use the fact that each vertex is visited exactly once on a cycle cover. So the quantity $x^T\hat{p}$ is equal for all $x \in X$. As a result, $\hat{p}$ satisfies the constant value property. \\
The vector $d_k$ is constructed as $d_{k-1} + p_k$ with $p_k = \hat{p}$. Since $d_{k-1}$ and $\hat{p}$ satisfy the constant value property, it follows that $d_k$ satisfies the constant value property.
\end{proof}
\begin{remark} Since the GL type bound can be computed both as a linearization based bound and by solving $(CGL_{ILP})$ (see Theorem \ref{EquivalenceGLLBB}), the iterative approach derived in this section can also be defined in terms of $(CGL_{ILP})$. In that case, we iteratively compute $v_{GL}$ and reformulate the quadratic cost matrix using the dual variables of $(CGL_{ILP})$. The details of this equivalent approach can be found in \cite{Meijer}.
\end{remark}

\noindent Since the linearizable matrix $\hat{Q}$ of Proposition \ref{GLsufficientcondition} can be written in the form $\hat{Q} = [U^T, V^T]Y + \text{Diag}(z)$ for some $Y \in \mathbb{R}^{2n \times m}$ and $z \in \mathbb{R}^m$, it follows from \cite{HuSotirov2} that we have $v_{RGL} \leq v_{RLT1}$.

\subsection{Preliminary Results} \label{Prelim2}
For the instances considered in the preliminary results of Section \ref{LinearizationBasedBounds}, we now test our Gilmore-Lawler type bounds. First, we compute the classical GL type bound, after symmetrizing the quadratic cost matrix $Q$. This bound is denoted by $GL$. Moreover, we consider the iterative GL type bounding approach where we symmetrize the quadratic cost matrix in each iteration. That is, we apply Algorithm \ref{AlgorithmLBB} using (\ref{GLr_k}) where instead of optimizing over $\eta$, we set $\eta = \frac{1}{2}$. We denote this bound by $RGL^{sym}$. Finally, we report the bound $RGL$ which is introduced in Section \ref{bestGL}. The maximum computation time is set at 3600 seconds. The results are given in Table \ref{GLPreliminary}.
\begin{table}[H]
\footnotesize
\centering
\begin{tabular}{@{}cccccccccc@{}}
\toprule
    &     &   &    & \multicolumn{2}{c}{$GL$} & \multicolumn{2}{c}{$RGL^{sym}$} & \multicolumn{2}{c}{$RGL$}  \\ \cmidrule(l){5-6} \cmidrule(l){7-8} \cmidrule(l){9-10}
$p$ & $n$ & $m$ &  $OPT$ & bound          & time & bound          & time              & bound & time \\ \midrule
0.1 &  20 &44 & 923 & 923 & 0.017  & 923  & 0.015  & 923  & 0.088  \\
 & 25& 76 & 1039 & 681 & 0.039 & 864 & 4.652  & 1018& 95.52   \\
  &30 & 100 & 1082 & 781 & 0.053   & 899 & 2.412   & 1082 & 15.18  \\
0.3 & 15 & 61 & 485   &      347     &   0.061   & 368& 1.481     & 485  & 7.140           \\
    & 20 & 118 & 438   & 223          & 0.068    & 263& 3.482       & 418  & 3600            \\
    & 25 &  172 & 382   & 176          & 0.136   &190 &       5.105 &         276 &    3600       \\
0.5 & 15 &  116 & 226   & 102          & 0.336   &110 &       2.602 &          222 &    1835        \\
    & 20 & 177 &    255   &    93            &     0.118                  & 103& 5.365 &  169	 & 3600               \\
    & 25 & 306 &   n.a.   &    66   &   0.296 & 75 &  13.80  &               105 & 3600               \\
0.7 & 15 & 149 & 173& 63 & 0.080  & 67 & 3.530  & 117  & 1236 \\
& 20 & 263& n.a. & 52  & 0.181  & 54 & 9.624  & 63  &  269.3 \\
 & 25& 396& n.a. & 56  & 0.365  & 62 & 18.77  & 79 &   1085 \\
    \bottomrule
\end{tabular}
\caption{Bounds and computation times in seconds of GL type bounds on Erd\H os-R\' enyi instances. \label{GLPreliminary}}
\end{table}
\noindent From Table \ref{GLPreliminary} it follows that the iterative approaches significantly improve the classical GL type bound. Among these iterative approaches, $RGL$ provides much stronger bounds than $RGL^{sym}$.
We conclude that this new approach of calculating the best GL type bound in each step provides better bounds than when setting $\eta = \frac{1}{2}$ in the reformulation.
However, it turns out that this improvement in the quality comes at the cost of efficiency. Clearly, we can stop our algorithm after a pre-specified number of iterations and/or time.

\section{Other bounds for the \textsc{QCCP}} \label{OtherBounds}
In this section we present several known bounding approaches from the literature that can be applied to the \textsc{QCCP}. In the next section, we compare those bounds with the bounds introduced in this paper.
We consider a column generation approach and a mixed integer linear programming (MILP) based  bound. \\ \\
Galbiati et al.\ \cite{Galbiati} construct a column generation approach for the \textsc{MinRC3}. This approach can be extended to the \textsc{QCCP}. To the best of our knowledge, this is the only implemented lower bounding approach for the \textsc{MinRC3} in the literature.

Let $\mathcal{C}$ be the set of all directed cycles in $G$. Moreover, let $\overline{\mathcal{C}} \subseteq \mathcal{C}$ be a subset of cycles such that it contains at least one cycle cover. Let $w_c$ be the cost of a cycle $c \in \mathcal{C}$. Then the restricted master problem $(RMP)$ is given by:
\begin{flalign}
(RMP) && \min_y \quad & \sum_{c \in \overline{\mathcal{C}}}w_cy_c \nonumber \\
&& \text{s.t.} \quad &  \sum_{c \in \overline{\mathcal{C}}: i \in c} y_c = 1 & & \forall i \in N & \label{RMP1} \\
&& & y_c \geq 0 & & \forall c \in \overline{\mathcal{C}}. & \label{RMP2}
\end{flalign}

Let $\pi \in \mathbb{R}^n$ be the vector of dual variables corresponding to constraint (\ref{RMP1}). Then the subproblem $(SP)$ searches for the cycle in $\mathcal{C}$ with the smallest (negative) reduced costs with respect to $\pi$, i.e.
\begin{flalign*}
(SP) && \min_{x,z} \{ x^TQx - z^T\pi \,  | \, \, \sum_{e \in \delta^+(i)} x_e = \sum_{e \in \delta^-(i)}x_e = z_i \quad \forall i \in N, \quad &  &  \\
&& \qquad  \, \sum_{e \in A}x_e \geq 2, \, x \in \{0,1\}^m, \, z \in \{0,1\}^n\}, & &
\end{flalign*}
where $z_i = 1$ if vertex $i$ is on the cycle and 0 otherwise. As stated in \cite{Galbiati}, the problem $(SP)$ is strongly $\mathcal{NP}$-hard itself. The quadratic objective function can be linearized by standard linearization techniques. A lower bound on the optimal value of the \textsc{QCCP} can be obtained by iteratively solving the master problem and its corresponding subproblem. If a cycle with negative reduced cost is found, we add it to the set $\overline{\mathcal{C}}$. This procedure is repeated until no more cycle with negative reduced cost is found or after some predefined stopping criteria. The obtained bound is denoted by $v_{CG}$.\\

Based on a procedure by \cite{Glover, Adams}, we  present the \textsc{QCCP} as an MILP problem.
{Let us first fix an  equivalent representation of $(G,Q)$.
Let $z_e$ be computed as in $(P_e)$ for all $e \in A$, see Section \ref{classicalGL}.
Moreover, we define for all $e \in A$
\begin{align*}
q_e^{\max} := \max \{ Q_{e,:}x \, : \, \, x \in X, x_e = 0 \}.
\end{align*}
Note that $q_e^{\max}$ can be obtained by solving a linear programming problem.} Then the \textsc{QCCP} can be formulated as an MILP:
\begin{flalign}
   (MILP) && \min_{x,y}  \quad & \sum_{e \in A}y_e  \nonumber \\
    && \text{s.t.} \quad & y_e \geq z_ex_e &&   \forall e \in A & \label{milp1} \\
    && & y_e \geq Q_{e,:}x - q_e^{\max} (1 - x_e) &&  \forall e \in A & \label{milp2} \\
    && & x \in X, y \in \mathbb{R}^m . \nonumber %%\label{milp3}
\end{flalign}
If we relax the binary constraint on $x$, then we obtain a lower bound for the \textsc{QCCP}. We call this bound the MILP based bound and we denote its value by $v_{MILP}$.
{The next result shows that $v_{MILP}$ is at least as large as the Gilmore-Lawler type bound.
\begin{theorem}
The MILP based bound dominates the Gilmore-Lawler type bound, i.e., $v_{GL} \leq v_{MILP}$.
\end{theorem}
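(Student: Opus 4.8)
The plan is to exploit only the first family of constraints (\ref{milp1}); the constraints (\ref{milp2}) will turn out to be irrelevant for the inequality we want. First I would take an optimal solution $(\hat{x}, \hat{y})$ of the continuous relaxation of $(MILP)$, so that $\hat{x}$ lies in the LP relaxation $\bar{X} := \{x \in \mathbb{R}^m : x \geq 0, \, \sum_{e \in \delta^+(i)} x_e = \sum_{e \in \delta^-(i)} x_e = 1 ~\forall i \in N\}$ of $X$ and $\hat{y} \in \mathbb{R}^m$. Constraint (\ref{milp1}) then gives $\hat{y}_e \geq z_e \hat{x}_e$ for every $e \in A$, and summing over all arcs yields
\begin{align*}
v_{MILP} = \sum_{e \in A} \hat{y}_e \geq \sum_{e \in A} z_e \hat{x}_e = z^T \hat{x}.
\end{align*}
Since constraints (\ref{milp2}) merely impose additional lower bounds on the variables $\hat{y}_e$, they can only increase the objective and hence play no role in this chain of inequalities.

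It then remains to argue that $z^T \hat{x} \geq v_{GL}$. Here I would invoke the total unimodularity of the constraint matrix $[U^T, V^T]^T$ describing $X$: as recalled in Section \ref{classicalGL}, the linear problem $(GL)$ and its continuous relaxation share the same optimal value, so that $\min_{x \in \bar{X}} z^T x = \min_{x \in X} z^T x = v_{GL}$. Because $\hat{x} \in \bar{X}$ is feasible for the relaxed cycle cover problem, we immediately obtain $z^T \hat{x} \geq v_{GL}$, and combining this with the previous display proves $v_{MILP} \geq v_{GL}$.

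I do not expect a genuine obstacle in this argument: once constraint (\ref{milp1}) is isolated, the bound is essentially immediate. The only step requiring a moment of care is the passage between the integral cycle cover problem and its fractional relaxation, which is precisely where total unimodularity is used. It is worth noting that the coefficients $q_e^{\max}$ and the constraints (\ref{milp2}) are never needed in the derivation, which matches the intuition that $(MILP)$ strengthens the Gilmore--Lawler bound exactly by adding the second family of constraints on top of the structure that already guarantees $v_{GL}$.
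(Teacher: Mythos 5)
Your proof is correct, but it takes a genuinely different route from the paper's. You argue on the primal side: take an optimal solution $(\hat{x},\hat{y})$ of the continuous relaxation, use only constraint (\ref{milp1}) to get $v_{MILP}=\sum_{e}\hat{y}_e\ge z^T\hat{x}$, and then use total unimodularity of the cycle cover polytope to conclude $z^T\hat{x}\ge\min_{x\in X}z^Tx=v_{GL}$. The paper instead works on the dual side: it writes out the dual $(DMILP)$ of the continuous relaxation of $(MILP)$, sets the multipliers $\delta$ of constraints (\ref{milp2}) to zero (forcing $\beta=\mathbbm{1}$), and observes that the restricted dual collapses exactly to the dual of the continuous relaxation of $(GL)$, so every GL dual solution is feasible for $(DMILP)$ with the same objective. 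The two arguments are mirror images — your observation that (\ref{milp2}) is irrelevant to the inequality corresponds precisely to the paper's choice $\delta=0$ — but yours is shorter and more elementary, requiring no explicit dualization. What the paper's dual formulation buys in exchange is an explicit certificate structure: it exhibits the GL bound as the value of $(DMILP)$ on the face $\delta=0$, which makes transparent exactly when the MILP bound is strictly stronger (namely, when some $\delta_e>0$ is profitable at the dual optimum). Both proofs rely on the same total-unimodularity fact to identify $v_{GL}$ with the value of the LP relaxation of $(GL)$, and both implicitly assume, as the paper notes after the theorem, that the same equivalent representation of $(G,Q)$ is used to compute $z$ in both bounds.
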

\begin{proof}
Let $\beta, \delta \in \mathbb{R}^m_+$ denote the dual variables of (\ref{milp1}) and (\ref{milp2}), respectively. Moreover, let $\mu, \gamma \in \mathbb{R}^n$ denote the dual variables of the cycle cover constraints $\sum_{e \in \delta^+(i)}x_e = 1$ and $\sum_{e \in \delta^-(i)}x_e = 1$ for all $i \in N$, respectively. Then the dual of the MILP based bound equals
\begin{flalign*}
(DMILP) && v_{MILP} := \max_{\beta, \delta, \mu, \gamma} \quad & \sum_{i \in N} \mu_i + \sum_{i \in N} \gamma_i - \sum_{e \in A} \delta_e q_e^\text{max}  \\
&& \text{s.t.} \quad & \beta_e + \delta_e = 1 & & \forall e \in A &  \\
&& & \mu_{e^+} + \gamma_{e^-} \leq z_e\beta_e + \delta^T Q_{:,e} + \delta_e q_e^{\max} & & \forall e \in A  & \\
&& & \beta_e, \delta_e \geq 0 & & \forall e \in A,
\intertext{where $Q_{:,e}$ equals the $e$th column of $Q$. Now set $\delta_e = 0$ for all $e \in A$. Then, $\beta_e = 1$ for all $e \in A$ due to the first set of constraints above. Then, $(DMILP)$ reduces to}
& &  \max_{\mu, \gamma} \quad & \sum_{i \in N} \mu_i + \sum_{i \in N} \gamma_i  \\
& &  \text{s.t.} \quad & \mu_{e^+} + \gamma_{e^-} \leq z_e & & \forall e \in A.
\end{flalign*}
This problem equals the dual of the continuous relaxation of $(GL)$.
%%Let $\mu^* \in \mathbb{R}^n$ and $\gamma^* \in \mathbb{R}^n$ be optimal for the latter problem. %Observe that the problem above equals the dual of the continuous relaxation of $(GL)$.
%%Hence, by taking $\delta$ and $\beta$ as the vectors of zeros and ones, respectively, $\mu = \mu^*$ and $\gamma = \gamma^*$, we obtain a feasible solution for $(DMILP)$ with objective value $v_{GL}$.
Hence, it follows that $v_{GL} \leq v_{MILP}$.
\end{proof}
%%Note that both,  the MILP based bound and the Gilmore-Lawler type bound, depend on the equivalent reformulation of $(G,Q)$
Note the MILP based bound and the Gilmore-Lawler type bound are comparable if the same equivalent reformulation of $(G,Q)$ is used in their computations.
%%under the assumptions that one uses the same equivalent reformulation of $(G,Q)$
}

\section{Numerical Results} \label{NumericalResults}

In this section we test our bounding approaches on a set of test instances and compare them with several approaches from the literature. We take into account the linearization based bound $LBB1$ from Section \ref{LBBConstruction}, the classical GL type bound $GL$ from Section \ref{classicalGL}, the reformulated GL type bound $RGL$ discussed in Section \ref{bestGL},  the column generation approach $CG$ and the MILP based bound $MILP$ from Section \ref{OtherBounds}, and  the first level RLT bound $RLT1$, see \cite{AdamsSherali1, AdamsSherali2}.  The GL bound and the MILP based bound are computed after symmetrizing $Q$. Note that we do not take into account $LBB2$ and $LBB3$, since our preliminary experiments from Section \ref{Prelim1} show that $LBB1$ is preferred when taking both quality and efficiency into account. \\
All lower bounds are implemented in Matlab on a pc with an Intel(R) Core(TM) i5-6500 CPU, 3.20 GHz and 8 GB memory using CPLEX 12.6 as the solver. \\ \\
We consider the following types of instances:
\begin{itemize}
\item \textbf{Erd\H os-R\'enyi instances:} These instances are created via the $G(n,p)$ Erd\H os-R\' enyi model \cite{ErdosRenyi}. The number of nodes is fixed to $n$ and each arc is included with probability $p$ independent of the other arcs. The quadratic cost between any pair of successive arcs is chosen discrete uniformly at random out of $\{0, ..., 100\}$.
\item \textbf{Manhattan instances:} The Manhattan instances are introduced in \cite{Comellas} and resemble the street pattern of modern cities like Manhattan or Barcelona. Given a finite set of positive integers $(n_1, n_2, ..., n_k)$, the graph consists of a $n_1 \times n_2 \times ... \times n_k$ directed grid. Each node in the interior is adjacent to its {$2k$ neighbours}. The nodes on the boundary are also incident to the corresponding nodes on the opposite boundary. For each dimension $k$, the arcs belonging to the same layer of the grid point in the same direction. However, the arcs of two consecutive layers point in the opposite direction. This results in a graph containing a large number of cycles. The quadratic cost between any pair of successive arcs is chosen discrete uniformly at random out of $\{0, ..., 10\}$.
\item \textbf{Angle-distance instances:} The Angle-distance instances are originally constructed for the \textsc{QTSP} in \cite{Fischer}. The number of nodes $n$ and the graph density $p$ are given. The $(x,y)$-coordinates of each node is chosen discrete uniformly at random out of $\{0, ..., 500\}^2$. Exactly $\lceil pn(n-1) \rceil$ arcs are chosen uniformly at random from the total set of arcs. For each arc $e \in A$, let $d_e$ denote the Euclidean distance between the endpoints of $e$. Moreover, for each two successive arcs $e$ and $f$, let $\alpha_{ef}$ denote the turning angle (in radians). Given some constant $\rho \in \mathbb{R}_+$, the quadratic cost of two successive arcs $e$ and $f$ is calculated as:
\begin{align*}
Q_{ef} = \left\lceil 0.1 \left( \rho \cdot \alpha_{ef} + \frac{d_e + d_f}{2} \right) \right\rceil.
\end{align*}
Similar as in \cite{Fischer}, we take $\rho = 40$.
\end{itemize}
\noindent For Erd\H os-R\' enyi and Angle-distance instances, preliminary experiments show that instances up to approximately 300 arcs can be solved to optimality within one hour.
For the Manhattan instances the limit is around 2000 arcs, due to the small density of these types of graphs.

In total we consider two sets of experiments: experiments on small instances and experiments on large instances. Since the optimum, $RLT1$ and $CG$ cannot be calculated for large instances, we only test these approaches on the smaller instances. Moreover, we include the bounds introduced in this paper, namely $LBB1$ and $RGL$. The value and computation times (in seconds)  on small Erd\H os-R\'enyi instances can be found in Table \ref{SmallER}. This table contains 6 instances for $n = 20, 25, 30$ and $p = 0.3, 0.5$. The results on  Manhattan and Angle-distance instances are reported in Tables \ref{SmallMH} and \ref{SmallAD}, respectively. For the Angle-distance instances we take the same values for $n$ and $p$ as for the Erd\H os-R\'enyi instances, while for the Manhattan instances we consider several two- and three-dimensional instances. The maximum computation time is set to 3600 seconds. When after this time no bound is computed, we report `n.a.' in the tables. Since the optimal value is always integer, we round up all bounds.

For the smaller instances, we see that $RLT1$ performs best in quality. When it can be computed, it is often close to the optimal value and it dominates the other bounds. $LBB1$ is often very close to $RLT1$, but can be computed much more efficiently. Namely, for the Erd\H os-R\' enyi and the Angle-distance instances the computation time of $LBB1$ for all small instances is below 0.4 seconds, whereas $RLT1$ cannot be computed within one hour for some of these instances. The column generation approach provides strong bounds, but in most cases it is not able to compute a bound in a time span of one hour. The reformulated GL type bound performs well on the Manhattan and Angle-distance instances, see Tables \ref{SmallMH} and \ref{SmallAD}. Although its total computation time is large, the advantage of this approach is that it provides a bound in a short time and then iteratively  improves the value. This makes it possible to stop the procedure at any given time and still obtain a bound. The bounds computed by $RGL$ are in almost all cases dominated by $LBB1$.

When taking both efficiency and quality into account, we conclude that the linearization based bound $LBB1$ outperforms the other approaches. Based on Tables \ref{SmallER}, \ref{SmallMH} and \ref{SmallAD}, the value of $LBB1$ is at least 75\% of the optimal value for the Erd\H os-R\'enyi instances. For the Angle-distance and Manhattan instances, this percentage equals 98\% and 96\%, respectively. \\ \\

\begin{table}[H]
\scriptsize
\centering
\begin{tabular}{@{}ccccccccccccc@{}}
\toprule
 & & & \multicolumn{2}{c}{$OPT$} & \multicolumn{2}{c}{$RLT1$} & \multicolumn{2}{c}{$CG$} & \multicolumn{2}{c}{$LBB1$} & \multicolumn{2}{c}{$RGL$} \\
 \cmidrule(l){4-5} \cmidrule(l){6-7} \cmidrule(l){8-9} \cmidrule(l){10-11} \cmidrule(l){12-13}
 $p$ & $n$ & $m$ & value & time & value & time & value & time & value & time & value & time \\
 \midrule
 0.3 & 20 & 119 & 319 & 10.28 & 301 & 4.825 & 289 & 102.3 & 260 & 0.020 & 285 & 3600\\
  & 25 & 177 & 386 & 19.04 & 331 & 20.09 & 331 & 928.9 & 305 & 0.040 & 280 & 3600\\
  & 30 & 280 & n.a. & 3600 & 284 & 70.62 & n.a.& 3600 & 274 & 0.134 & 185 & 3600\\
  0.5 & 20 & 195 & 236 & 211.0 & 181 & 17.00 & n.a. & 3600 & 175 & 0.121 & 129 & 3600 \\
  & 25 & 327 & n.a. & 3600 & 141 & 82.52 & n.a. & 3600 & 136 & 0.233 & 89 & 3600 \\
  & 30 & 442 & n.a. & 3600 & 168 & 385.0 & n.a. & 3600 & 162 & 0.322 & 99 & 3600\\
  \bottomrule
\end{tabular}
\caption{Bounds and computation times in seconds of $RLT1$, $CG$, $LBB1$ and $RGL$ on small Erd\H os-R\' enyi instances. \label{SmallER}}
\end{table}

\begin{table}[H]
\scriptsize
\centering
\begin{tabular}{@{}ccccccccccccc@{}}
\toprule
 & & & \multicolumn{2}{c}{$OPT$} & \multicolumn{2}{c}{$RLT1$} & \multicolumn{2}{c}{$CG$} & \multicolumn{2}{c}{$LBB1$} & \multicolumn{2}{c}{$RGL$} \\
 \cmidrule(l){4-5} \cmidrule(l){6-7} \cmidrule(l){8-9} \cmidrule(l){10-11} \cmidrule(l){12-13}
 Instance & $n$ & $m$ & value & time  & value & time & value & time & value & time & value & time \\ \midrule
 $(5,5)$ & 25 & 50 & 103 & 0.483 & 103 & 1.534 & 103 & 1.484 & 103 & 0.006 & 103 & 5.756\\
 $(10,10)$ & 100 & 200 & 418 & 2.335 & 418 & 1.974 & 418 & 1645 & 418 & 0.022 & 371 & 16.06 \\
 $(4,4,4)$ & 64 & 192 & 199 & 6.312 & 193 & 9.415 & 196 & 691.4 & 193 & 0.081 & 175 & 3600\\
 $(6,6,6)$ & 216 & 648 &  700 & 23.67 & 683 & 1152 & n.a. & 3600 & 683 & 0.568 & 551 & 3600\\
 $(8,8,8)$ & 512 & 1536 & 1566 & 394.1 & n.a.& 3600& n.a. & 3600 & 1530 & 1.213 & n.a. & 3600 \\
 $(10, 10, 10)$ & 1000 & 3000 & n.a. & 3600 & n.a. &3600 & n.a.& 3600 & 3113 & 3.754  & n.a. & 3600 \\ \bottomrule
 \end{tabular}
 \caption{Bounds and computation times in seconds of $RLT1$, $CG$, $LBB1$ and $RGL$ on small Manhattan instances. \label{SmallMH}}
\end{table}

\begin{table}[H]
\scriptsize
\centering
\begin{tabular}{@{}ccccccccccccc@{}}
\toprule
 & & & \multicolumn{2}{c}{$OPT$} & \multicolumn{2}{c}{$RLT1$} & \multicolumn{2}{c}{$CG$} & \multicolumn{2}{c}{$LBB1$} & \multicolumn{2}{c}{$RGL$} \\
 \cmidrule(l){4-5} \cmidrule(l){6-7} \cmidrule(l){8-9} \cmidrule(l){10-11} \cmidrule(l){12-13}
 $p$ & $n$ & $m$ & value & time & value & time & value & time & value & time & value & time \\ \midrule
0.3 & 20 & 114 & 474 & 2.490 & 474 & 1.719 & 474 & 416.5 & 474 & 0.002 & 474 & 64.35\\
& 25 & 180 & 553 & 323.9 & 553 & 4.119 & n.a. & 3600 & 552 & 0.004 & 553 & 1559 \\
& 30 & 261 & 512 & 2951.0 & 512 & 19.52 & n.a. & 3600 & 512 & 0.079 & 494 & 3600\\
0.5 & 20 & 190 & 276 & 177.8 & 276 & 6.732 & n.a. & 3600 & 276 & 0.053 & 274 & 1319 \\
& 25 &300 & 342 & 2163.6 & 340 & 53.43 & n.a. & 3600 & 338 & 0.142 & 320 & 3600\\
& 30 & 435& n.a. & 3600 & 381 & 490.5 & n.a. & 3600 & 377 & 0.332 & 355 & 3600 \\ \bottomrule
 \end{tabular}
 \caption{Bounds and computation times in seconds of $RLT1$, $CG$, $LBB1$ and $RGL$ on small Angle-distance instances. \label{SmallAD}}
\end{table}

For the larger instances, we only compute the bounds that can be computed efficiently. That is, we do not consider the iterative approaches, but only the bounds $GL$, $MILP$ and $LBB1$.
We also investigate the effect of a reformulation by adding an optimal incident skew symmetric matrix to the cost matrix, see Section \ref{LBBConstruction}. We apply this reformulation to $LBB1$, which implies that we optimize over the set $\tau_1^{skew}(Q)$, see \eqref{skew}, instead of $\tau_1(Q)$. The resulting bound is denoted by $LBB1^{skew}$. For the Manhattan instances this bound is omitted, since preliminary experiments showed that this reformulation does not improve the bounds for most Manhattan instances. This could be due to the sparsity of Manhattan instances.
The bounds and computation times (in seconds) for the Erd\H os-R\'enyi, Manhattan and Angle-distance instances are reported in Tables
\ref{BigER}, \ref{BigMH} and \ref{BigAD}, respectively. For the Erd\H os-R\'enyi and Angle-distance instances we take for $n$ values between 30 and 100 nodes and consider $p = 0.3$ and $p = 0.5$. For the Manhattan instances we consider large two-dimensional instances and one large three-dimensional instance. The maximum computation time for these bounds is set to 1800 seconds. Again, we round up all bound values.

For the larger instances, we see that $LBB1$ in all cases dominates $GL$ and $MILP$ in both quality and efficiency. The difference in quality is most present for the Erd\H os-R\'enyi instances, see Table \ref{BigER}. For the Manhattan instances, we see that $GL$ and $MILP$ can be calculated efficiently for instances up to 3000 arcs.
 However, $LBB1$ remains efficient even for  larger instances. In particular, bounds for Manhattan instances up to 15000 arcs can be computed within  60 seconds.

Moreover, we conclude from Tables \ref{BigER} and \ref{BigAD} that the addition of an incidence skew symmetric matrix to the set $\tau_1(Q)$ only improves the bounds for some of the instances. In general, it turns out that the Erd\H os-R\'enyi instances can successfully be improved by this method, whereas for the Angle-distance instances only in a few cases there is an improvement.  Although the computation times of $LBB1^{skew}$ are  larger than those of $LBB1$, bounds can still be computed in a reasonable time span.

\begin{table}[H]
\scriptsize
\centering
\begin{tabular}{@{}ccccccccccccc@{}}
\toprule
 & & & \multicolumn{2}{c}{$GL$} & \multicolumn{2}{c}{$MILP$} & \multicolumn{2}{c}{$LBB1$}  & \multicolumn{2}{c}{$LBB1^{skew}$} \\
  \cmidrule(l){4-5} \cmidrule(l){6-7} \cmidrule(l){8-9}  \cmidrule(l){10-11}
  $p$ & $n$ & $m$ & value & time & value & time  & value & time & value & time \\ \midrule
  0.3 & 30 & 284 & 111 & 0.272 & 122 & 0.435 & 230 & 0.083 & 232 & 0.766 \\
  & 40 & 468 & 117 & 0.645 & 131 & 1.006 & 265 & 0.179 & 278 & 1.711 \\
  & 50 & 754 & 121 & 1.598 & 130 & 2.410 & 267 & 0.404 & 274 & 4.184 \\
  & 60 & 1062 & 103 & 4.068 & 118 & 5.788 & 272 & 0.726 & 272 & 8.048 \\
  & 70 & 1481 & 114 & 8.910 & 123 & 12.94 & 255 & 1.660 & 258 & 15.38\\
  & 80 & 1842 & 113 & 14.26 & 122 & 20.82 & 263 & 2.740 & 267 & 24.67 \\
  & 90 & 2385 & 114 & 23.25 & 122 & 37.61 & 259 & 5.296 & 261 & 41.74\\
  & 100 & 2962 & 119 & 36.03 & 126 & 63.49 & 269 & 13.32 & 270 & 69.90 \\
  0.5 & 30 & 434 & 73 & 0.557 & 79 & 0.783 & 161 & 0.182 & 163 & 9.218 \\
  & 40 & 793 & 69 & 1.607 & 74 & 2.364 & 166 & 0.554  & 169 & 10.38 \\
  & 50 & 1197 & 72 & 4.323 & 77 & 6.682 & 165 & 1.185 & 167 & 15.74 \\ \bottomrule
\end{tabular}
 \caption{Bounds and computation times in seconds of $GL$, $MILP$, $LBB1$ and $LBB1^{skew}$ on large Erd\H os-R\' enyi instances. \label{BigER}}
\end{table}

\begin{table}[H]
\scriptsize
\centering
\begin{tabular}{@{}ccccccccc@{}}
\toprule
 & & & \multicolumn{2}{c}{$GL$} & \multicolumn{2}{c}{$MILP$} & \multicolumn{2}{c}{$LBB1$}   \\
  \cmidrule(l){4-5} \cmidrule(l){6-7} \cmidrule(l){8-9}
    Instance & $n$ & $m$ & value & time  & value & time  & value & time  \\ \midrule
  $(20,20)$ & 400 & 800 & 1237 & 5.31 & 1472 & 7.491 & 1537 & 0.100  \\
  $(30,30)$ & 900 & 1800 & 2813 & 56.24 & 3343 & 86.46 & 3517 & 0.410  \\
  $(40,40)$ & 1600 & 3200 & 5101 & 346.8 & 6028 & 553.5 & 6302 & 1.388  \\
  $(50,50)$ & 2500 & 5000 & 7983 & 1225.3 & 9424 & 1897.8 & 9828 & 2.838  \\
  $(17,17,17)$ & 4913  & 14739 & n.a. & 1800 & n.a. & n.a. &  15398 & 54.79 \\ \bottomrule
\end{tabular}
 \caption{Bounds and computation times in seconds of $GL$, $MILP$ and $LBB1$ on large Manhattan instances. \label{BigMH}}
\end{table}

\begin{table}[H]
\scriptsize
\centering
\begin{tabular}{@{}ccccccccccccc@{}}
\toprule
 & & & \multicolumn{2}{c}{$GL$} & \multicolumn{2}{c}{$MILP$} & \multicolumn{2}{c}{$LBB1$} & \multicolumn{2}{c}{$LBB1^{skew}$}  \\
  \cmidrule(l){4-5} \cmidrule(l){6-7} \cmidrule(l){8-9} \cmidrule(l){10-11}
  $p$ & $n$ & $m$ & value & time & value & time & value & time & value & time \\ \midrule
  0.3 & 30 & 261 & 456 & 0.238 & 467 & 0.410 & 525 & 0.054 & 525 & 6.957\\
  & 40 & 468 & 507 & 0.693 & 516 & 0.984 & 567 & 0.463 & 567 & 7.795\\
  & 50 & 735 & 622 & 1.567 & 631 & 2.223 & 709 & 0.317 & 709 & 9.982\\
  & 60 & 1062 & 609 & 3.684 & 618 & 5.401 & 684 & 0.694 & 684 & 13.97\\
  & 70 & 1449 & 656 & 7.436 & 666 & 12.37 & 746 & 1.331 & 747 & 21.63\\
  & 80 & 1896 & 749 & 13.03 & 756 & 23.77 & 867 & 2.613 & 867 & 31.96\\
  & 90 & 2403 & 815 & 20.33 & 826 & 39.99 & 933 & 4.838 & 933 & 48.81 \\
  & 100 & 2970 & 810 & 30.35 & 823 & 66.04 & 951 & 12.50 & 952 & 78.62\\
  0.5 & 30 & 435 & 339 & 0.516 & 343 & 0.876 & 373 & 0.168 & 373 & 16.59\\
  & 40 & 780 & 411 & 1.456 & 418 & 2.386 & 464 & 0.474 & 464 & 16.21\\
  & 50 & 1225 & 466 & 4.159 & 473 & 7.550 & 534 & 1.177 & 535 & 22.34 \\ \bottomrule
\end{tabular}
 \caption{Bounds and computation times in seconds of $GL$, $MILP$, $LBB1$ and $LBB1^{skew}$ on Angle-distance instances. \label{BigAD}}
\end{table}

\section{Conclusion}
In this paper we consider the linearization problem for the \textsc{QCCP} and its applications.
We provide several sufficient conditions for linearizability, and show how these conditions can be used to obtain strong lower bounds for the \textsc{QCCP}.
The linearization based bound $LBB1$, resulting from the incident weak sum property,
is the most efficient LBB in terms of complexity and quality, see Table \ref{PreliminaryExperiments}.
We show here that the GL type bound for the \textsc{QCCP} also belongs to the family of linearization based bounds, see Theorem \ref{EquivalenceGLLBB},
by providing the appropriate sufficient condition, see Proposition \ref{GLsufficientcondition}.

The first level RLT bounds and/or the GL type bounds  are the only linearization based bounds for quadratic binary optimization problems that are implemented for various
binary quadratic optimization problems  up to date.
This paper shows that besides these two well-known bounds, the linearization based bounds introduced here are worth considering.

Here, we also present  how each sufficient condition can be used in an iterative bounding procedure. In particular, we introduce a new reformulation technique in which we search for the best equivalent representation of the residual cost matrix and its optimal linearizable matrix, see Algorithm \ref{AlgorithmLBB}.
We show how the resulting iterative procedure computes the best GL type bound in each iteration.
Our approach  outperforms known iterative bounding procedures that use the GL type bounds, see Table \ref{GLPreliminary}.
Moreover, we prove that the resulting linearization vectors in each step satisfy the constant value property, see Theorem \ref{CVP}.

Finally,  our numerical results show that  our approach outperforms several other bounds from the literature if we take into account both quality and efficiency. Although  the linearization based bounds $LBB1$  are dominated by the well known first level RLT bounds, they can be computed extremely fast.
For the Manhattan instances,  $LBB1$ bounds for instances up to 15000 arcs can be computed within 60 seconds.
However, other approaches fail to provide bounds for instances of this large sizes.

We expect that similar bounding procedures can be successfully applied for other quadratic optimization problems, such as the quadratic assignment problem,
the quadratic minimum spanning tree, and the quadratic traveling salesman problem.  However,  this is a subject of our future research.

\paragraph{Acknowledgements} We would like to thank two anonymous reviewers for their insightful comments and suggestions to improve an earlier version of this work.

\end{document}